\documentclass[1996/10/24]{amsart}

\usepackage{color}

\allowdisplaybreaks[3]

\usepackage{amsfonts}
\usepackage{epsf,latexsym,amsfonts,amsbsy,mathrsfs}
\usepackage{amsmath, amssymb, amsthm,amscd,amsxtra}
\usepackage[]{graphicx,subfigure}
\usepackage{epstopdf}
\usepackage{float}
\usepackage{multirow}
\usepackage{bbm}
\usepackage{stmaryrd}
\usepackage{epsfig}
\usepackage{tikz}
\usepackage{dsfont}
\usetikzlibrary{positioning} 
\usepackage{xcolor}

\usepackage{amssymb,stmaryrd,color}
\usepackage{lipsum}
\usepackage{amsfonts}
\usepackage{graphicx}
\usepackage{epstopdf}
\usepackage{algorithmic}
\usepackage{bm}
\usepackage{multirow}

\ifpdf
  \DeclareGraphicsExtensions{.eps,.pdf,.png,.jpg}
\else
  \DeclareGraphicsExtensions{.eps}
\fi

\newtheorem{theorem}{Theorem}[section]
\newtheorem{lemma}[theorem]{Lemma}

\theoremstyle{definition}

\theoremstyle{remark}
\newtheorem{remark}[theorem]{Remark}

\theoremstyle{corollary}
\newtheorem{corollary}[theorem]{Corollary}

\numberwithin{equation}{section}

\newcommand{\ignore}[1]{}

\newcommand{\Bb}{{\boldsymbol{b}}}

\newcommand{\Bn}{{\boldsymbol{n}}}

\newcommand{\Bp}{{\boldsymbol{p}}}
\newcommand{\Bq}{{\boldsymbol{q}}}

\newcommand{\Bw}{{\boldsymbol{w}}}
\newcommand{\Bx}{{\boldsymbol{x}}}

\newcommand{\Bpsi}{{\boldsymbol{\psi}}}

\newcommand{\BB}{{\boldsymbol{B}}}

\newcommand{\BW}{{\boldsymbol{W}}}

\begin{document}

\title[Maximum-norm estimates of HDG methods for parabolic problem]{A unified analysis of maximum-norm estimates
for a class of HDG methods for parabolic problem in polyhedral domains}

\author{Huangxin Chen}
\address{School of Mathematical Sciences and Fujian Provincial Key Laboratory on Mathematical Modeling and
High Performance Scientific Computing, Xiamen University, Fujian, 361005, China}
\email{chx@xmu.edu.cn}

\author{Haitao Leng}
\address{School of Mathematic and Information Sciences, Guangzhou University, Guangzhou 510006, Guangdong, China}
\email{htleng@m.scnu.edu.cn}

\author{Weifeng Qiu}
\address{Department of Mathematics, City University of Hong Kong, Hung Hom, Hong Kong}
\email{weifeqiu@cityu.edu.hk}

\thanks{The third author is corresponding author.}

\subjclass[2010]{65N15, 65N30, 76M10}

\date{}

\begin{abstract}
This paper studies a general class of semi-discrete hybridizable discontinuous Galerkin (HDG) methods, including mixed methods, for parabolic problems in nonconvex polygonal and polyhedral domains. By developing local energy error estimates together with energy estimates for a regularized Green's function, we establish a unified framework to prove the maximum-norm stability of both the semigroup defined by the semi-discrete scheme and the corresponding discrete solutions. The stability analysis shows that the main challenges stem from the treatment of numerical flux variables and the inherent asymmetry of the discrete scheme. These challenges are intrinsic to numerical approaches formulated within the mixed framework for parabolic equations. The asymmetry prevents the direct application of the double kick-back argument, while the presence of flux variables requires special techniques to control their values at the initial time. We emphasize that the analytical tools developed here to address these challenges are sufficiently general to be adapted for maximum-norm stability investigations of a broader class of numerical schemes arising from mixed formulations of parabolic equations.
Furthermore, by the stability results and their proofs, we derive the maximal regularity of the semi-discrete solution in $L^{\infty}((0,T);L^p(\Omega))$-norm and reduce the maximum-norm error estimates to those of the corresponding elliptic equations and the $L^2$-orthogonal projection. Since the derivation of the local energy error estimates does not rely on the lifting operator, which is only available for simplicial meshes, our results (excluding mixed methods) remain valid for polygonal/polyhedral meshes. Note that the aforementioned results are obtained for the first time to numerical methods based on the mixed formulation, and the proof techniques can be applied to the analysis of other numerical methods derived from the mixed formulation.
\end{abstract}
\keywords{maximum-norm stability, maximum-norm error estimates, HDG methods, polyhedral meshes, local energy error estimates}

\maketitle

\section{Introduction}
Let $\Omega\subset \mathbb{R}^d$ be an open bounded domain that is nonconvex with polygonal ($d=2$) or polyhedral ($d=3$) Lipschitz boundary $\partial\Omega$,
in which we consider the following parabolic equations
\begin{subequations}\label{model}
\begin{align}
\partial_t u(x,t)-\Delta u(x,t)=&f(x,t),\quad \text{in}~\Omega\times (0,T],\\
u(x,t)=&0,\qquad  \quad \  \text{on}~\partial\Omega\times [0,T],\\
u(x,0)=&u_0(x),\quad \ \, \text{in}~\Omega,
\end{align}
\end{subequations}
where $T$ is a positive constant.

If we define the elliptic operator $-\Delta: H_0^1(\Omega)\rightarrow H^{-1}(\Omega)$ by
\begin{align*}
(-\Delta w,v)=(\nabla w,\nabla v),\quad \forall w,v\in H_0^1(\Omega),
\end{align*}
the solution of (\ref{model}) for the homogeneous case of $f=0$ can be expressed as $u(t)= E(t)u_0$, where $\{E(t)=e^{t\Delta}\}_{t>0}$ is the semigroup
generated by the operator $-\Delta$. By the theory of parabolic equations, we know that $\{E(t)\}_{t>0}$ is an analytic semigroup on $\mathcal{C}_0(\overline{\Omega})$ and $L^p(\Omega)~(1\leq p<\infty)$, satisfying the following estimates (cf. \cite{tmw2006}):
\begin{align}
\|E(t)v\|_{L^p(\Omega)}+t\|\partial_t E(t)v\|_{L^p(\Omega)}\leq C\|v\|_{L^p(\Omega)},\quad \forall v\in L^p(\Omega),~t>0,\label{msfs:1}\\
\|E(t)v\|_{L^{\infty}(\Omega)}+t\|\partial_tE(t)v\|_{L^{\infty}(\Omega)}\leq C\|v\|_{L^{\infty}(\Omega)},\quad \forall v\in \label{msfs:2} \mathcal{C}_0(\overline{\Omega}),~t>0,
\end{align}
where $\mathcal{C}_0(\overline{\Omega})$ is the space of continuous functions with homogeneous boundary.

It is well-known that the discontinuous Galerkin (DG) method is very flexible when it is applied to solve partial differential equations. However, the final discrete system still involves a large number of globally coupled degrees of freedom. In order to overcome this issue,
a new class of HDG methods based on the mixed formulation, which retains the advantage of DG methods and results in a system with significantly reduced
degrees of freedom, is proposed by Cockburn et al. \cite{cgl2009} for second-order elliptic equations. Currently, HDG methods have been successfully used in engineering and scientific computing, see \cite{ccq2017, cqs2018, cfq2018, cgnps2011, lc2024} and the references therein for more detail.

In this paper, our goal is to develop a unified framework for three different HDG methods (including mixed methods, see (\ref{HDG})) to establish the discrete analogue of (\ref{msfs:2}) in nonconvex polygonal and polyhedral domains. Then, based on the result and its proofs, we further prove the maximum-norm stability, the maximal regularity in the $L^{\infty}((0,T);L^p(\Omega))$-norm and the maximum-norm error estimate for the semi-discrete solution. Note that these HDG methods are based on the mixed formulation of the problem and
their semi-discrete solution is $(\Bq_h,u_h,\widehat{u}_h)$, where $\Bq_h$ approximates the flux $-\nabla u$ and $(u_h,\widehat{u}_h)$ approximates the exact solution $u$ in $\Omega$ and mesh interfaces. Roughly speaking, in this paper we prove the following estimation results
\begin{align}
&\Vert u_{h} \Vert_{L^{\infty}(\Omega)}
+ t\Vert \partial_{t}u_{h} \Vert_{L^{\infty}(\Omega)}
\leq C \Vert u_{0,h}\Vert_{L^{\infty}(\Omega)},
\qquad \forall t \in (0,T],~\text{if}~f=0,\label{main_result_stability2}\\
&\Vert u_{h} \Vert_{L^{\infty}(\Omega)} \leq
C \left( \Vert f \Vert_{L^{\infty}((0,t); L^{\infty}(\Omega))}
+ \Vert u_{0,h}\Vert_{L^{\infty}(\Omega)} \right),
\qquad \forall t \in (0,T],\label{main_result_stability1}
\end{align}
and
\begin{align}
&\|\partial_t u_h\|_{L^{\infty}((0,T);L^p(\Omega))}+\|\Delta_h u_h\|_{L^{\infty}((0,T);L^p(\Omega))}\nonumber\\
&\qquad \leq C|\ln h|\|f\|_{L^{\infty}((0,T);L^p(\Omega))},\quad  1\leq p\leq \infty,
~\text{if}~u_0=0,\label{main_result_stability3}\\
&\|u-u_h\|_{L^{\infty}((0,T);L^{\infty}(\Omega))}\leq C|\ln h|\big(\|u-u_h^e\|_{L^{\infty}((0,T);L^{\infty}(\Omega))}\nonumber\\
& \qquad +\|u-\Pi_Vu\|_{L^{\infty}((0,T);L^{\infty}(\Omega))}
+\|\Pi_Vu(0)-u_h(0)\|_{L^{\infty}(\Omega)}\big),\label{main_result_stability4}
\end{align}
where $u_{0,h}\in V_h$ is an approximation of $u_0$, $V_h$ is the discrete space provided in Section \ref{sec2}, $\Delta_h$ is the discrete Laplacian operator defined in Section \ref{sec5}, $h$ is the mesh size, $\Pi_V$ is the $L^2$-orthogonal projection onto $V_h$, and
$u_h^e$ is the HDG solution of the elliptic equations defined in (\ref{HDG-E}).
The constants $C$ in (\ref{main_result_stability2}), (\ref{main_result_stability1}), (\ref{main_result_stability3}) and (\ref{main_result_stability4}) are independent of $T$, which implies that these results also hold
as $T$ tends to infinity. Detailed descriptions are given in Theorem \ref{max-stability} for (\ref{main_result_stability2}) and
(\ref{main_result_stability1}), Theorem \ref{stability} for (\ref{main_result_stability3}), and Corollary \ref{max-ee} for
(\ref{main_result_stability4}). To the best of our knowledge, there were no similar results  for
numerical methods based on the mixed formulation. Furthermore, the analytical techniques established
in this work for (\ref{main_result_stability2})-(\ref{main_result_stability4}) are sufficiently general and can be adapted to the similar analysis of a broader class of numerical schemes arising from mixed formulations of parabolic equations.

If $\Omega$ is a smooth domain, the authors in \cite{stw1980, stw1998, tw2000, l2015, kk2020} have shown (\ref{main_result_stability2}) for
semi-discrete finite element methods (FEMs). If $\Omega$ is a convex domain, (\ref{main_result_stability2}) is proved in \cite{r1990, ls2017} for semi-discrete FEMs and in \cite{clq2025} for semi-discrete interior penalty hybridized discontinuous Galerkin (IP-HDG) methods. In \cite{stw1980, r1990}, the result includes a logarithmic factor $|\ln h|$ and is restricted to the case of $d=2$. In \cite{stw1998, tw2000, l2015, kk2020, ls2017, clq2025}, the logarithmic factor is removed and the result is extended to the case of $d=3$ by using a local energy error estimate and energy estimates of the Green's and the regularized Green's functions. Recently, the results shown in \cite{ls2017} are further extended to the case in which the domain is nonconvex \cite{l2019}.
In a curvilinear polyhedral domain that can not be exactly triangulated, in order to address the domain perturbation effect the authors in  \cite{qx2026} use isoparametric finite element methods to approximate the parabolic equations and establish the stability result (\ref{main_result_stability2}).
As for (\ref{main_result_stability3}) and (\ref{main_result_stability4}), they have been investigated for semi-discrete FEMs of parabolic equations in smooth domains \cite{ge2006, kk2020, stw1998, l2015}, convex domains \cite{ls2017}, and nonconvex domains
\cite{l2019}. Note that (\ref{main_result_stability3}) and (\ref{main_result_stability4}) have also been proved for isoparametric finite element methods in \cite{qx2026}. Since the result (\ref{main_result_stability2}) serves as the foundation for deriving other results in this paper, here we only present a brief review on (\ref{main_result_stability3}) and (\ref{main_result_stability4}). Moreover, up to now, no results similar to (\ref{main_result_stability1}) have been identified in the literature. For interested readers, we additionally refer to \cite{cmz2021, cc2004, dlsw2011, g2006, glrs2009, lc2023, lv2016, sw1995} for maximum-norm error estimates of elliptic equations, \cite{lv2016a, lv2017} for pointwise best approximations of the fully discrete scheme for parabolic equations, and \cite{bll2025} for the weak maximum principle of FEMs for parabolic equations.

It is worth noting that the semi-discrete scheme presented in \cite{stw1980, r1990, stw1980, stw1998, tw2000, l2015, kk2020, ls2017, clq2025, l2019, qx2026}, which is based on the primary formulation of the problem and uses $\nabla u_h$ to approximate $\nabla u$, is symmetric. In contrast, the HDG method of this paper based on the mixed formulation of the problem is asymmetric since $\Bq_h$ is not exactly the gradient of $u_h$ on each element. These differences lead to two difficulties specific to numerical methods based on the mixed formulation of parabolic equations, which we will encounter in the proof of (\ref{main_result_stability2}).
First, in the local energy error estimate for $\partial_{t}e_u$ (cf. Lemma~\ref{local_energy_estimate2}), terms involving $\partial_{t}e_{\boldsymbol{q}}$ and $\partial_t
(\mathcal{L}e_u-e_{\widehat{u}})$ along mesh interfaces arise in our analysis. Notably, such terms are absent from
\cite[(A.8)]{l2019} for FEMs and from \cite[Lemma~$3.7$]{clq2025} for IP-HDG methods.
These terms arise from the asymmetry of the discrete scheme and will prevent the successful application of a key technique called the double kick-back argument. Second, there is no explicit formulation in this article to represent $(\boldsymbol{q}_{h},\widehat{u}_{h})$
in terms of $u_{h}$. Therefore, even if the numerical initial data $u_{0,h}$ decays very fast, it is not clear how to show the decay properties of the corresponding $(\boldsymbol{q}_{h},\widehat{u}_{h})$ which is essential in maximum-norm stability analysis.
Since the semi-discrete scheme in \cite{stw1980, r1990, stw1980, stw1998, tw2000, l2015, kk2020, ls2017, clq2025, l2019, qx2026} based on the primary formulation is symmetric and does not involve the term $\Bq_h$, the two difficulties mentioned above can be both avoided.
Furthermore, when the numerical method under consideration is applied to elliptic equations, the aforementioned difficulties are absent in the context of maximum-norm stability analysis.

Next, we give a brief description on the new technique we developed in this article
to overcome the challenges mentioned above. In the proof of
Lemma~\ref{local_energy_estimate3}, we repeat the local energy error estimate obtained in Lemma \ref{local_energy_estimate1} between two space-time sub-domains $I_0\times\Omega_0$ and $I_1\times \Omega_1$,  such that there are two factors
$(h\rho^{-1})^{\beta_{1}}$ and $(h\rho^{-1})^{\beta_2}$ appeared in front of the terms involving $\partial_{t}
e_{\boldsymbol{q}}$ and $\boldsymbol{q}-\boldsymbol{q}_h$ respectively. Here the positive real numbers $\beta_1$ and $\beta_{2}$ depend on
the total number of repeating of local energy error estimates, and $\rho$ is the distance between sub-domains $\Omega_0$ and $\Omega_1$.
When Lemma~\ref{local_energy_estimate3} is applied in the double kick-back
argument (see the proof of Theorem~\ref{max-stability}), we choose $\beta_1=\beta_{2}=4+\frac{d}{2}$ such that corresponding terms can be absorbed properly.
In Lemma~\ref{ee3}, we use a similar argument to prove a local energy estimate
for $(\mathcal{R}_h,\Gamma_h,\widehat{\Gamma}_h)$ at $t=0$, where $(\mathcal{R}_h,\Gamma_h,\widehat{\Gamma}_h)$ is the semi-discrete HDG solution of the regularized Green's function. And this will be used to control the value of $(\mathcal{R}_h,\Gamma_h,\widehat{\Gamma}_h)$ at $t=0$ in the proof of Theorem \ref{max-stability}.

In the literature, we have found only one paper \cite{l2019}, based on the primary formulation, that investigates (\ref{main_result_stability2}), (\ref{main_result_stability3}) and (\ref{main_result_stability4}) for parabolic equations in nonconvex polygonal/polyhedral domains. In addition, compare with \cite{ls2017, l2019, qx2026}, our proofs have not used the superapproximation due to the special choice of cut-off functions. Since the lifting operator that is only available
for simplicial meshes \cite{lc2023} is not utilized in this paper, our results (excluding mixed methods) are also valid for polygonal/polyhedral meshes.

The rest of this paper is arranged as follows. In Section \ref{sec2}, notation, mesh and the HDG semi-discrete scheme for parabolic problem (\ref{model}) are
provided. In Section \ref{sec3}, we establish a local energy error estimate that will be used to prove the maximum-norm stability. In
Section \ref{sec4}, we prove energy estimates of the regularized Green's function and use these together with the local energy error estimate obtained in
Section \ref{sec3} to prove the maximum-norm stabilities of the discrete semigroup and discrete solutions. In Section \ref{sec5}, we prove the maximal regularity and maximum-norm error estimates by using the stability results established in Section \ref{sec4}.
Finally, we end this paper with some conclusions in Section \ref{sec6}.

\section{Notation and HDG semidiscretization}\label{sec2}
For any nonnegative integer $s$, an open subset $D\subset\Omega$ and $1\leq p\leq \infty$, let $W^{s,p}(D)$ be the Sobolev spaces \cite{a1975} with
norms
\begin{align*}
\|v\|_{W^{s,p}(D)}^p=\sum_{i=0}^s|v|_{W^{i,p}(D)}^p\quad 1\leq p<\infty,\quad \|v\|_{W^{s,\infty}(D)}=\max_{0\leq i\leq s}|v|_{W^{i,\infty}(D)},
\end{align*}
and seminorms
\begin{align*}
|v|^p_{W^{i,p}(D)}=\sum_{|\alpha|=i}\int_D\left|\frac{\partial^{\alpha}v}{\partial x^{\alpha}}\right|^p dx\quad 1\leq p<\infty,\quad
|v|_{W^{i,\infty}(D)}=\max_{|\alpha|=i}\left\|\frac{\partial^{\alpha}v}{\partial x^{\alpha}}\right\|_{L^{\infty}(D)},
\end{align*}
where
\begin{align*}
\frac{\partial^{\alpha}v}{\partial x^{\alpha}}=\frac{\partial^{|\alpha|}v}{\partial x_1^{\alpha_1}\cdots\partial x_d^{\alpha_d}}
\end{align*}
for the multi-index $\alpha=(\alpha_1,\cdots,\alpha_d),~\alpha_1\geq 0,~\cdots,~\alpha_d\geq 0$, and $|\alpha|=\alpha_1+\cdots+\alpha_d$.
Denote by $W_0^{s,p}(D)~(1<p<\infty)$ the completion of $\mathcal{C}_0^{\infty}(D)$ according to the norm $\|\cdot\|_{W^{s,p}(D)}$, where
$\mathcal{C}_0^{\infty}(D)$ is the space of functions with continuous derivatives of arbitrary order and compact support in $D$. If $p=2$, the
Sobolev spaces $W^{s,2}(D)$ are denoted by $H^s(D)$ with norms $\|\cdot\|_{H^s(D)}$ and seminorms $|\cdot|_{H^s(D)}$. If $s=0$, we set $L^p(D)=W^{0,p}(D)$
with norms $\|\cdot\|_{L^p(D)}$.

For any Banach space $X$ and a given $T>0$, $L^p((0,T);X)$ and $W^{1,p}((0,T);X)$ denote the Bochner spaces \cite{y1980} with norms
\begin{align*}
\|v\|_{L^p((0,T);X)}^p=&\int_0^T\|v(t)\|_X^p dt\quad 1\leq p<\infty,\quad \|v\|_{L^{\infty}((0,T);X)}=\max_{t\in(0,T)}\|v(t)\|_X,\\
&\|v\|_{W^{1,p}((0,T);X)}=\|v\|_{L^p((0,T);X)}+\|\partial_t v\|_{L^p((0,T);X)}.
\end{align*}
Then the variational formulation of parabolic problem (\ref{model}) reads as follows: Find $u\in L^2((0,T);H_0^1(\Omega))$ such that
\begin{align}
\label{weak}
\iint_{Q_T}-u\partial_t v+\nabla u\cdot\nabla v ~dxdt=\iint_{Q_T} fv ~dxdt+\int_{\Omega}u_0 v(x,0)~dx,
\end{align}
for any $v\in L^2((0,T);H_0^1(\Omega))\cap H^1((0,T);L^2(\Omega))$ with $v(T)=0$, where $Q_T=(0,T)\times \Omega$.

\begin{theorem}
\label{regularity}
(i) For $f\in L^2((0,T);L^2(\Omega))$ and $u_0\in L^2(\Omega)$, the variational formulation (\ref{weak}) has a unique solution
$u\in L^2((0,T);H_0^1(\Omega))$ with $\partial_t u\in L^2((0,T);H^{-1}(\Omega))$, where $H^{-1}(\Omega)$ is the dual space of $H_0^1(\Omega)$.
Moreover we have $u\in \mathcal{C}([0,T];L^2(\Omega))$.

(ii) If we further have $u_0\in H_0^1(\Omega)$, then the solution of (\ref{weak}) belongs to $L^2((0,T);H^{1+\gamma}(\Omega))\cap L^{\infty}((0,T);H_0^1(\Omega))$ with $\partial_tu\in L^2((0,T);L^2(\Omega))$, where $\gamma\in (\frac{1}{2},1)$ is a positive constant depending on the domain $\Omega$.
\end{theorem}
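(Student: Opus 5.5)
Part (i) follows from the standard Lions--Galerkin theory for abstract parabolic problems on the Gelfand triple $H_0^1(\Omega)\hookrightarrow L^2(\Omega)\hookrightarrow H^{-1}(\Omega)$.

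\textbf{Existence.} Take an eigenbasis $\{\phi_k\}$ of the Dirichlet Laplacian, which is orthonormal in $L^2(\Omega)$ and orthogonal in $H_0^1(\Omega)$. Let $u_m$ be the Galerkin approximations in $\mathrm{span}\{\phi_1,\dots,\phi_m\}$. Testing with $u_m$ and applying Gronwall gives bounds for $u_m$ in $L^\infty((0,T);L^2)\cap L^2((0,T);H_0^1)$ that are uniform in $m$. Since the projection onto the span of the first $m$ eigenfunctions is stable in $H_0^1$, the equation also bounds $\partial_t u_m$ in $L^2((0,T);H^{-1})$. Weak compactness then produces a limit $u$ satisfying (\ref{weak}).

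\textbf{Time continuity.} The Lions--Magenes lemma shows that $u\in L^2(H_0^1)$ with $\partial_t u\in L^2(H^{-1})$ implies $u\in\mathcal{C}([0,T];L^2(\Omega))$. It also gives the identity $\frac{d}{dt}\|u\|^2=2\langle\partial_t u,u\rangle$.

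\textbf{Initial condition.} Integrating by parts in time against test functions with $v(T)=0$ identifies $u(0)=u_0$.

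\textbf{Uniqueness.} For the difference of two solutions, the energy identity together with zero data forces the difference to vanish.

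For (ii), I test the Galerkin system with $\partial_t u_m$. The data are projected onto the first $m$ modes, so $u_m(0)\to u_0$ in $H_0^1$. This yields
\begin{align*}
\|\partial_t u_m\|_{L^2(L^2)}^2+\sup_t\|\nabla u_m\|^2\le C\big(\|f\|_{L^2(L^2)}^2+\|\nabla u_0\|^2\big).
\end{align*}
Passing to the limit gives $\partial_t u\in L^2(L^2)$ and $u\in L^\infty(H_0^1)$.

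\textbf{Spatial regularity (the main obstacle).} For a.e.\ $t$, $u(t)\in H_0^1$ solves the elliptic problem
\begin{align*}
-\Delta u(t)=f(t)-\partial_t u(t)\in L^2(\Omega).
\end{align*}
This step is the crux: on a nonconvex polygon or polyhedron, full $H^2$ regularity fails. Instead I invoke the elliptic regularity theory for Lipschitz polyhedral domains (Grisvard, Dauge), namely
\begin{align*}
\|w\|_{H^{1+\gamma}}\le C\|\Delta w\|_{L^2}\quad\text{for } w\in H_0^1,
\end{align*}
with some $\gamma\in(\frac12,1)$. The exponent is set by the largest reentrant corner angle $\omega$ in 2D, giving $\gamma<\pi/\omega$, and by the edge and vertex singular exponents in 3D. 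Squaring and integrating in time then gives $u\in L^2((0,T);H^{1+\gamma}(\Omega))$.

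I would cite the elliptic shift theorem rather than reprove it. The only check needed is that the singular exponents exceed $\frac12$ for polyhedral Lipschitz domains, which holds since angles are less than $2\pi$.
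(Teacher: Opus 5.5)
Your proposal is correct and follows essentially the paper's route. The paper only cites Tr\"oltzsch (Theorems 3.9, 3.10, 3.12) for (i) and Evans (Theorem 5, Chapter 7) for (ii), and those proofs are exactly the Galerkin, energy and Lions--Magenes arguments you sketch. Where you go beyond the citation, namely replacing Evans's $H^2$ elliptic step (which assumes a $C^2$ boundary) by the $H^{1+\gamma}$ shift theorem on polygons/polyhedra, you supply the adaptation the nonconvex polyhedral setting needs; this is the same Dauge result the paper uses in its Appendix.
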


\begin{proof}
The results presented in (i) can be found in \cite[Theorem 3.9, Theorem 3.10 and Theorem 3.12]{t2010}. As for the results in (ii), they have been proved in
\cite[Theorem 5 in Chapter 7]{e2010}.
\end{proof}

To describe the semi-discrete scheme, let $\mathcal{T}_h$ be a conforming partition of the domain $\Omega$. Denote by $\mathcal{E}_h^o$ the set of all interior edges/faces of $\mathcal{T}_h$ and $\mathcal{E}_h^{\partial}$ the set of all boundary edges/faces.
We define $\mathcal{E}_h=\mathcal{E}_h^o\cup\mathcal{E}_h^{\partial}$ and $\partial\mathcal{T}_h=\{\partial K: K\in\mathcal{T}_h\}$, where $\partial K$
denotes the boundary of the element $K$. For any $K\in\mathcal{T}_h$ and $F\in\mathcal{E}_h$, let $h_K$ and $h_F$ be the diameters of $K$ and $F$.
We set $h=\max_{K\in\mathcal{T}_h}h_K$ and assume that the partition $\mathcal{T}_h$ is quasi-uniform.

Next, we introduce mesh-dependent Sobolev spaces
\begin{align*}
W_h^{s,p}(D)=\{v: v\in W^{s,p}(K\cap D),~\forall K\in\mathcal{T}_h\},
\end{align*}
with norms
\begin{align*}
\|v\|_{W_h^{s,p}(D)}^p=\sum_{i=0}^s|v|_{W_h^{i,p}(D)}^p\quad 1\leq p<\infty,\quad \|v\|_{W_h^{s,\infty}(D)}=\max_{0\leq i\leq s}|v|_{W_h^{i,\infty}(D)},
\end{align*}
and seminorms
\begin{align*}
|v|_{W_h^{i,p}(D)}^p=\sum_{K\in\mathcal{T}_h}|v|_{W^{i,p}(K\cap D)}^p\quad 1\leq p<\infty, \quad |v|_{W_h^{i,\infty}(D)}=\max_{K\in\mathcal{T}_h}|v|_{W^{i,\infty}(K\cap D)}.
\end{align*}
If $p=2$, the mesh-dependent Sobolev spaces $W_h^{s,2}(D)$ are denoted by $H_h^s(D)$ with norms $\|\cdot\|_{H_h^s(D)}$ and seminorms $|\cdot|_{H^s_h(D)}$.
If $s=0$, the mesh-dependent Sobolev spaces $W_h^{0,p}(D)$ are abbreviated by $L_h^p(D)$ with norms $\|\cdot\|_{L_h^p(D)}$.

Based on the partition $\mathcal{T}_h$, we introduce discontinuous finite element spaces as follows:
\begin{align*}
\BW_h=&\{\Bw_h\in (L^2(\Omega))^d: \Bw_h|_K\in \BW(K),~\forall K\in\mathcal{T}_h\},\\
V_h=&\{v\in L^2(\Omega): v|_K\in V(K),~K\in\mathcal{T}_h\},\\
\widehat{V}_h=&\{\mu\in L^2(\mathcal{E}_h): \mu|_F\in\mathcal{P}^k(F)~F\in\mathcal{E}_h,~\mu=0~\text{on}~\mathcal{E}_h^{\partial}\},
\end{align*}
for $k\geq 1$, where $\mathcal{P}^s(D)$ denotes the set of polynomials of degree no larger than $s$ on the domain $D$ and $\BW(K)$ and $V(K)$ are two local spaces that will be decided later. By setting $\Bq=-\nabla u$, we arrive at the mixed formulation of problem (\ref{model}):
\begin{subequations}\label{model-mixed}
\begin{align}
\Bq+\nabla u=&0,\quad \text{in}~\Omega\times(0,T],\\
\partial_t u+\nabla\cdot\Bq=&f,\quad \text{in}~\Omega\times (0,T],\\
u=&0,\quad \text{on}~\partial\Omega\times [0,T],\\
u|_{t=0}=&u_0,\quad \text{in}~\Omega.
\end{align}
\end{subequations}
Then the semi-discrete scheme of (\ref{model-mixed}) reads as follows: For $t\in [0,T]$, find $(\Bq_h(t),u_h(t),\widehat{u}_h(t))\in \BW_h\times V_h\times \widehat{V}_h$ such that
\begin{subequations}\label{HDG}
\begin{align}
(\Bq_h,\Bw_h)_{\mathcal{T}_h}-(u_h,\nabla\cdot\Bw_h)_{\mathcal{T}_h}+\langle\widehat{u}_h,\Bw_h\cdot\Bn\rangle_{\partial
\mathcal{T}_h}=0&,\label{HDG:1}\\
(\partial_t u_h,v_h)_{\mathcal{T}_h}-(\Bq_h,\nabla v_h)_{\mathcal{T}_h}+\langle\widehat{\Bq}_h\cdot\Bn,v_h\rangle_{\partial\mathcal{T}_h}=(f,v_h)_{\mathcal{T}_h}&,\label{HDG:2}\\
\langle\widehat{\Bq}_h\cdot\Bn,\widehat{v}_h\rangle_{\partial\mathcal{T}_h}=0&,\label{HDG:3}\\
\widehat{\Bq}_h\cdot\Bn=\Bq_h\cdot\Bn+\tau(\mathcal{L}u_h-\widehat{u}_h),\quad \text{on}~\partial\mathcal{T}_h&,\label{HDG:4}
\end{align}
\end{subequations}
for any $(\Bw_h,v_h,\widehat{v}_h)\in \BW_h\times V_h\times \widehat{V}_h$ and $u_h|_{t=0}=u_{0,h}$,
where $u_{0,h}\in V_h$ is an approximation of $u_0$, $\Bn$ is the outward unit normal vector to $\partial K$, $(\cdot,\cdot)_K$ and $\langle\cdot,\cdot\rangle_{\partial K}$ denote
the inner products in $L^2(K)$ and $L^2(\partial K)$, and
\begin{align*}
(\cdot,\cdot)_{\mathfrak{D}}=\sum_{K\in\mathfrak{D}}(\cdot,\cdot)_K,\quad \langle\cdot,\cdot\rangle_{\partial \mathfrak{D}}=\sum_{K\in \mathfrak{D}}
\langle\cdot,\cdot\rangle_{\partial K},
\end{align*}
for any $\mathfrak{D}\subset\mathcal{T}_h$.

In this paper, the local spaces $\BW(K)$ and $V(K)$ and the operator $\mathcal{L}$ are set as
\begin{align}\label{sp}
\left(\BW(K), V(K), \mathcal{L}\right)=\begin{cases}
\left((\mathcal{P}^k(K))^d+\Bx\mathcal{P}^k(K), \mathcal{P}^k(K), \mathcal{I}\right),\quad &\text{hybrid mixed DG},\\
\left((\mathcal{P}^k(K))^d, \mathcal{P}^k(K), \mathcal{I}\right), \quad &\text{HDG-1}, \\
\left((\mathcal{P}^k(K))^d, \mathcal{P}^{k+1}(K),\Pi_{\widehat{V}}\right), \quad &\text{HDG-2},
\end{cases}
\end{align}
where $\mathcal{I}$ is the identity mapping and $\Pi_{\widehat{V}}$ denotes the standard $L^2$-orthogonal projection onto $\mathcal{P}^k(F)$ on each
$F\in\mathcal{E}_h$. Here $\tau$ denotes the stabilization parameter. On each $K\in\mathcal{T}_h$, it is chosen as
\begin{align}
\begin{cases}
\tau=0~\text{on}~\partial K,\quad &\text{hybrid mixed DG},\\
\tau=h_K^{-1}~\text{on}~F_K^{\ast}~\text{and}~\tau=0~\text{on}~\partial K\backslash F_K^{\ast},\quad &\text{HDG-1},\\
\tau=\widetilde{\alpha} h_K^{-1}~\text{on}~\partial K,\quad &\text{HDG-2},
\end{cases}
\end{align}
where $\widetilde{\alpha}$ is a mesh independent and positive constant on all edges/faces and $F_K^{\ast}$ denotes a single edge/face of $\partial K$. For hybrid mixed DG methods \cite{es2010}, the partition $\mathcal{T}_h$ consists of shape-regular simplices. For HDG-1 \cite{cfq2018} and HDG-2 \cite{qs2016} methods, the partition $\mathcal{T}_h$ can be composed of polygonal/polyhedral elements which consist of triangles/tetrahedras. Moreover, for each polygonal/polyhedral element $K$, we assume that its sub-triangulation is shape-regular and quasi-uniform and the number of sub-elements is uniformly bounded by a positive constant $\mathcal{N}_d$.

\section{Local energy error estimates}\label{sec3}
This section is devoted to proving a local energy error estimate which plays an important role in the analysis of maximum-norm stability and maximum-norm error estimates. To the best of our knowledge, there is no such result in the literature for the semi-discrete scheme (\ref{HDG}) for parabolic problem, thus we
sketch a proof here. Before this, we provide some approximation properties that will be used frequently in the subsequent proofs.

\begin{lemma}(Inverse properties, \cite[Lemma 4.5.3]{bs2008}).\label{inv}
Let $1\leq q\leq p\leq\infty$ and $0\leq m\leq s\leq l+1$.

(i) If $v\in \{v\in L^2(\Omega):v|_K\in\mathcal{P}^l(K),~K\in\mathcal{T}_h\}$, then for $K\in\mathcal{T}_h$
\begin{align}
\|v\|_{W^{s,p}(K)}\leq Ch^{m-s+\frac{d}{p}-\frac{d}{q}}\|v\|_{W^{m,q}(K)}.
\end{align}

(ii) If $\Omega_0\subset\Omega_1\subset\Omega$ with $\text{dist}(\partial \Omega_0,\partial \Omega_1\backslash\partial\Omega)\geq rh $
for some $r>1$, then for $v\in \{v\in L^2(\Omega):v|_K\in\mathcal{P}^l(K),~K\in\mathcal{T}_h\}$,
\begin{align}
\|v\|_{W_h^{s,p}(\Omega_0)}\leq Ch^{m-s+\frac{d}{p}-\frac{d}{q}}\|v\|_{W_h^{m,q}(\Omega_1)}.
\end{align}
\end{lemma}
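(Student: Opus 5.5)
The plan is a standard scaling argument. Part (i) comes from mapping to a reference element. Part (ii) follows by summing the local bounds from (i) over elements and then using the separation hypothesis.

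\emph{Part (i).} Fix $K\in\mathcal{T}_h$ and let $F_K:\widehat K\to K$, $F_K(\widehat x)=B_K\widehat x+b_K$, be the affine map from a fixed reference simplex. For polygonal/polyhedral elements of HDG-1/HDG-2 we work on each sub-simplex. By shape regularity and quasi-uniformity, $\|B_K\|\le Ch$, $\|B_K^{-1}\|\le Ch^{-1}$ and $|\det B_K|\simeq h^d$. For $\widehat v=v\circ F_K\in\mathcal{P}^l(\widehat K)$ the chain rule and change of variables give
\begin{align*}
|v|_{W^{j,p}(K)}\le Ch^{-j+\frac{d}{p}}|\widehat v|_{W^{j,p}(\widehat K)},\qquad |\widehat v|_{W^{j,q}(\widehat K)}\le Ch^{j-\frac{d}{q}}|v|_{W^{j,q}(K)}.
\end{align*}
On the finite-dimensional space $\mathcal{P}^l(\widehat K)$ all norms are equivalent. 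Hence $\|\widehat v\|_{W^{s,p}(\widehat K)}\le C\|\widehat v\|_{W^{m,q}(\widehat K)}$ for $m\le s$.

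We then bound each seminorm $|v|_{W^{j,p}(K)}$, $0\le j\le s$, by a sum of $h^{i-j+\frac dp-\frac dq}|v|_{W^{i,q}(K)}$, $0\le i\le m$. Since $h\le \mathrm{diam}(\Omega)$ is bounded, every power that appears is bounded by $h^{m-s+\frac dp-\frac dq}$. This gives (i) with a constant depending only on $l,d$, the shape regularity and $\mathcal{N}_d$. For a polyhedral $K$ we sum over the at most $\mathcal{N}_d$ sub-simplices. The polynomial is the same on all of them, so the norms on $K$ and on its sub-simplices are equivalent up to $\mathcal{N}_d$-dependent constants.

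\emph{Part (ii).} Write $\|v\|^p_{W_h^{s,p}(\Omega_0)}\le\sum_{K\cap\Omega_0\ne\emptyset}\|v\|^p_{W^{s,p}(K)}$ and apply (i) on each such $K$. Since $h_K\le h<rh\le\mathrm{dist}(\partial\Omega_0,\partial\Omega_1\setminus\partial\Omega)$, every element meeting $\Omega_0$ lies inside $\Omega_1$. Therefore
\begin{align*}
\|v\|_{W_h^{s,p}(\Omega_0)}\le Ch^{m-s+\frac dp-\frac dq}\Big(\sum_{K\subset\Omega_1}\|v\|^p_{W^{m,q}(K)}\Big)^{1/p}.
\end{align*}
Because $q\le p$, the embedding $\ell^q\hookrightarrow\ell^p$ gives $\big(\sum a_K^p\big)^{1/p}\le\big(\sum a_K^q\big)^{1/q}$. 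This bounds the right-hand side by $\|v\|_{W_h^{m,q}(\Omega_1)}$. The case $p=\infty$ uses a maximum instead of a sum.

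\emph{Main obstacle.} The only delicate point is the geometric claim that elements touching $\Omega_0$ are contained in $\Omega_1$. This needs $r>1$ together with the fact that all element diameters are at most $h$. If some element straddles $\partial\Omega_1$ only along $\partial\Omega$, this is harmless, because the distance is measured to $\partial\Omega_1\setminus\partial\Omega$.
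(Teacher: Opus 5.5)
\textbf{There is a genuine gap in part (i) when $m\ge 1$.} Your overall route is the standard one behind the Brenner--Scott lemma that the paper cites (the paper gives no proof of its own): affine scaling to a reference simplex for (i), then summation plus $\ell^q\hookrightarrow\ell^p$ for (ii). Part (ii) is fine.

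The problem is the exponent bookkeeping in (i). Going through the full-norm equivalence $\|\widehat v\|_{W^{s,p}(\widehat K)}\le C\|\widehat v\|_{W^{m,q}(\widehat K)}$ produces, as you say, terms $h^{i-j+\frac dp-\frac dq}|v|_{W^{i,q}(K)}$ with $0\le i\le m$ and $0\le j\le s$. Boundedness of $h$ only lets you replace $h^{a}$ by $Ch^{b}$ when $a\ge b$. The pair $(i,j)=(0,s)$ has exponent $-s+\frac dp-\frac dq$, which is strictly smaller than $m-s+\frac dp-\frac dq$.

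So your chain only proves $\|v\|_{W^{s,p}(K)}\le Ch^{-s+\frac dp-\frac dq}\|v\|_{W^{m,q}(K)}$, which loses a factor $h^{-m}$. For instance, with $m=s=1$ and $p=q=2$ it gives only $|v|_{H^1(K)}\le C\big(h^{-1}\|v\|_{L^2(K)}+|v|_{H^1(K)}\big)$, i.e.\ $\|v\|_{H^1(K)}\le Ch^{-1}\|v\|_{H^1(K)}$, instead of the trivial bound. The lower-order part of $\|\widehat v\|_{W^{m,q}(\widehat K)}$ scales too badly to be paired with the top-order seminorm on the left.

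\textbf{The missing ingredient is a seminorm-to-seminorm inequality on the reference element.} For $\widehat v\in\mathcal{P}^l(\widehat K)$, $0\le j\le s$ and $k=\min(j,m)$, one has $|\widehat v|_{W^{j,p}(\widehat K)}\le C|\widehat v|_{W^{k,q}(\widehat K)}$. This holds because on $\mathcal{P}^l(\widehat K)$ the kernel of $|\cdot|_{W^{k,q}(\widehat K)}$ is $\mathcal{P}^{k-1}(\widehat K)$ (read as $\{0\}$ if $k=0$). That kernel is annihilated by $|\cdot|_{W^{j,p}(\widehat K)}$ since $j\ge k$. On the finite-dimensional quotient the first is a norm and the second a seminorm, so the second is dominated by the first.

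Scaling then gives $|v|_{W^{j,p}(K)}\le Ch^{k-j+\frac dp-\frac dq}|v|_{W^{k,q}(K)}$. Now $k-j\ge m-s$ for every $0\le j\le s$, so your appeal to bounded $h$ becomes legitimate and (i) follows, including the sub-simplex summation for polyhedral $K$. For $m=0$, which covers the uses of this lemma in the paper, your argument is already correct as written.

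\textbf{A minor point in (ii).} For a non-convex polygonal/polyhedral element, the straight-segment argument that $K\cap\Omega_0\neq\emptyset$ forces $K\subset\Omega_1$ may leave $K$, and even $\overline{\Omega}$. Run the containment argument on the convex sub-simplices instead, and apply the simplex version of (i) there.
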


\begin{lemma}(Trace properties).\label{tra}
For any $K\in\mathcal{T}_h$ and $1\leq p\leq \infty$, we have
\begin{align}
\|v\|_{L^p(\partial K)}\leq C(h_K^{-\frac{1}{p}}\|v\|_{L^p(K)}+h_K^{1-\frac{1}{p}}\|\nabla v\|_{L^p(K)}),\quad \forall v\in W^{1,p}(K),\label{tra:1}
\end{align}
and
\begin{align}
\|v\|_{L^2(\partial K)}\leq C\left(h_K^{-\frac{1}{2}}\|v\|_{L^2(K)}+h_K^{\gamma-\frac{1}{2}}|v|_{H^{\gamma}(K)}\right),\quad  \forall v\in H^{\gamma}(K), \label{tra:2}
\end{align}
where $|\cdot|_{H^{\gamma}(K)}$ is the seminorm of $H^{\gamma}(K)$ defined as
\begin{align*}
|v|_{H^{\gamma}(K)}^2=\int_K\int_K\frac{|v(x)-v(y)|^2}{|x-y|^{d+2\gamma}}dxdy.
\end{align*}
Note that the positive constant $C$ appeared in (\ref{tra:2}) depends on $\gamma$.
\end{lemma}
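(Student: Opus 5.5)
The plan is a reduction to a reference element followed by scaling. For the simplicial case we map $K$ onto a fixed reference simplex $\widehat K$ with an affine map $F_K(\widehat x)=B_K\widehat x+b_K$. Shape regularity gives $\|B_K\|\le Ch_K$, $\|B_K^{-1}\|\le Ch_K^{-1}$ and $|\det B_K|\simeq h_K^d$. For the polygonal/polyhedral elements used by HDG-1 and HDG-2, $K$ is a union of at most $\mathcal{N}_d$ shape-regular sub-simplices $T$ with $h_T\simeq h_K$. Each face of $\partial K$ is covered by faces of these sub-simplices. Hence it suffices to prove both inequalities on each sub-simplex $T$ and sum, since $\|v\|_{L^p(\partial K)}^p\le\sum_{T}\|v\|_{L^p(\partial T)}^p$. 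For \eqref{tra:2} we also need $\sum_T|v|_{H^\gamma(T)}^2\le|v|_{H^\gamma(K)}^2$, which holds because the double integral over $K\times K$ contains the disjoint pieces $T\times T$. So we reduce to a single simplex $K$ of diameter $h_K$.

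On the reference simplex we use two standard facts. The first is the trace theorem $W^{1,p}(\widehat K)\hookrightarrow L^p(\partial\widehat K)$, valid for $1\le p\le\infty$; for $p=\infty$ it is trivial because $W^{1,\infty}$ functions are Lipschitz up to the boundary. The second is the fractional trace theorem $H^{\gamma}(\widehat K)\hookrightarrow L^2(\partial\widehat K)$ for $\gamma>\frac12$, so the range $\gamma\in(\frac12,1)$ from Theorem \ref{regularity} is allowed; the constant depends on $\gamma$. These give
\begin{align*}
\|\widehat v\|_{L^p(\partial\widehat K)}\le C\big(\|\widehat v\|_{L^p(\widehat K)}+\|\widehat\nabla\widehat v\|_{L^p(\widehat K)}\big),\qquad
\|\widehat v\|_{L^2(\partial\widehat K)}\le C\big(\|\widehat v\|_{L^2(\widehat K)}+|\widehat v|_{H^\gamma(\widehat K)}\big).
\end{align*}
Note that the full $H^\gamma$ norm on $\widehat K$ is equivalent to $\|\cdot\|_{L^2}+|\cdot|_{H^\gamma}$ by definition of the Slobodeckij norm, so no Poincaré-type argument is needed.

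Next comes the scaling step, with $v=\widehat v\circ F_K^{-1}$. The relevant scalings are:
\begin{itemize}
\item boundary: $\|v\|_{L^p(\partial K)}\simeq h_K^{(d-1)/p}\|\widehat v\|_{L^p(\partial\widehat K)}$;
\item volume: $\|v\|_{L^p(K)}\simeq h_K^{d/p}\|\widehat v\|_{L^p(\widehat K)}$;
\item gradient: $\|\nabla v\|_{L^p(K)}\simeq h_K^{d/p-1}\|\widehat\nabla\widehat v\|_{L^p(\widehat K)}$;
\item fractional seminorm: $|v|_{H^\gamma(K)}^2\simeq h_K^{2d-d-2\gamma}|\widehat v|^2_{H^\gamma(\widehat K)}=h_K^{d-2\gamma}|\widehat v|^2_{H^\gamma(\widehat K)}$, obtained by changing variables in both $x$ and $y$, which gives a factor $h_K^{2d}$, while $|x-y|^{d+2\gamma}\simeq h_K^{d+2\gamma}|\widehat x-\widehat y|^{d+2\gamma}$.
\end{itemize}
Multiplying the reference inequalities by $h_K^{(d-1)/p}$ gives $h_K^{-1/p}\|v\|_{L^p(K)}+h_K^{1-1/p}\|\nabla v\|_{L^p(K)}$, which is \eqref{tra:1}. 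For \eqref{tra:2}, multiplying by $h_K^{(d-1)/2}$ gives $h_K^{-1/2}\|v\|_{L^2(K)}+h_K^{(d-1)/2-(d-2\gamma)/2}|v|_{H^\gamma(K)}=h_K^{-1/2}\|v\|_{L^2(K)}+h_K^{\gamma-1/2}|v|_{H^\gamma(K)}$. The case $p=\infty$ is read with $1/p=0$.

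The only delicate points are bookkeeping, not analysis. First, the fractional seminorm must scale correctly under the affine map; this needs $|B_K(\widehat x-\widehat y)|\simeq h_K|\widehat x-\widehat y|$, which follows from shape regularity. Second, in the polyhedral case the sub-triangulation must have $h_T\simeq h_K$ uniformly, which holds by the assumed shape regularity, quasi-uniformity and the bound $\mathcal{N}_d$. With these in place the constant $C$ depends only on shape regularity, $\mathcal{N}_d$, $d$ and (for \eqref{tra:2}) $\gamma$.
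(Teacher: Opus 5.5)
Your proposal is correct and follows essentially the same route as the paper's proof of (\ref{tra:2}). In both, you split $K$ into its shape-regular sub-simplices, map each affinely to the reference element, apply the fractional trace inequality there, and rescale with the same factors $h_K^{(d-1)/2}$, $h_K^{-d/2}$, $h_K^{-d/2+\gamma}$, then sum using $\sum_T|v|^2_{H^\gamma(T)}\le|v|^2_{H^\gamma(K)}$. The only difference is that the paper cites (\ref{tra:1}) from the literature, whereas you derive it by the same scaling argument, which also shows explicitly that it holds on polygonal/polyhedral elements.
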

\begin{proof}
The result (\ref{tra:1}) can be found in \cite[(2.3)]{g2006}. In order to prove (\ref{tra:2}), for each polygonal/polyhedral element $K$, let $\{K_i\}_{i=1}^{N_{K,d}}$ denote the set of sub-elements that intersect $\partial K$, and set $e_i=\partial K\cap\partial K_i~(i=1,\cdots,N_{K,d})$. Let $\widehat{\mathcal{K}}$ be the reference element of $K_i$, and let the affine mapping be given by
$\overline{K}_i\ni\Bx=\BB_{K_i}\widehat{\Bx}+\Bb_{K_i}$. Then, we define $\widehat{v}_i(\widehat{\Bx})=v_i(\BB_{K_i}\widehat{\Bx}+\Bb_{K_i})=v_i(\Bx)$ and have
\begin{align*}
\|v_i\|_{L^2(e_i)}\leq Ch_K^{\frac{d-1}{2}}\|\widehat{v}_i\|_{L^2(\widehat{e}_i)},\quad \|\widehat{v}_i\|_{L^2(\widehat{\mathcal{K}})}\leq Ch_K^{-\frac{d}{2}}
\|v_i\|_{L^2(K_i)},\quad |\widehat{v}_i|_{H^{\gamma}(\widehat{\mathcal{K}})}\leq Ch_K^{-\frac{d}{2}+\gamma}|v_i|_{H^{\gamma}(K_i)},
\end{align*}
for any $i=1,\cdots,N_{K,d}$, where $v_i$ is the restriction of $v$ on $K_i$.
Then according to the standard trace inequality in the reference element $\widehat{\mathcal{K}}$, we obtain
\begin{align*}
\|v\|^2_{L^2(\partial K)}=&\sum_{i=1}^{N_{K,d}}\|v_i\|^2_{L^2(e_i)}\leq Ch_K^{d-1}\sum_{i=1}^{N_{K,d}}\|\widehat{v}_i\|^2_{L^2(\widehat{e}_i)}\\
\leq&Ch_K^{d-1}\sum_{i=1}^{N_{K,d}}\left(\|\widehat{v}_i\|^2_{L^2(\widehat{\mathcal{K}})}+|\widehat{v}_i|^2_{H^{\gamma}(\widehat{\mathcal{K}})}\right)\quad ( C~\text{depends on}~\gamma)\\
\leq& C\sum_{i=1}^{N_{K,d}}\left(h_K^{-1}\|v_i\|^2_{L^2(K_i)}+h_K^{2\gamma-1}|v_i|^2_{H^{\gamma}(K_i)}\right)\\
\leq& C\left(h_K^{-1}\|v\|^2_{L^2(K)}+h_K^{2\gamma-1}|v|^2_{H^{\gamma}(K)}\right).
\end{align*}
We conclude the proof.
\end{proof}

With all ingredients at hand, now we follow \cite{clq2025, lc2023, ls2017, l2019, tw2000} to prove a local energy error estimate.
In the rest of this section, we let $\Omega_0\subset\Omega_2\subset\Omega_1\subset\Omega$ with $\rho$ satisfying
$$
\rho=\text{dist}(\partial\Omega_0,\partial\Omega_2\backslash\partial\Omega)=
\text{dist}(\partial\Omega_2,\partial\Omega_1\backslash\partial\Omega)\geq rh,
$$
and $I_0 \subset I_2\subset I_1\subset (0,T)$ with $\rho^2=\text{dist}(\partial I_0,\partial I_2\backslash\{0,T\})=\text{dist}(\partial I_2,
\partial I_1\backslash\{0,T\})$, where $r>1$ is a sufficiently large fixed constant. Moreover, we
let $\varphi_1\in\mathcal{C}^{\infty}(\Omega)$ satisfy $\varphi_1= 1$ on $\Omega_0$, $\varphi_1= 0$ on $\Omega\backslash\Omega_2$, and $|\varphi_1|_{W^{l,\infty}(\Omega)}\leq C\rho^{-l}$
for $l=0,1,\cdots$. Also let $\varphi_2\in\mathcal{C}^{\infty}(0,T)$ satisfy $\varphi_2=1$ on $I_0$, $\varphi_2= 0$ on $(0,T)\backslash I_2$,  and $|\frac{d \varphi_2}{dt}|\leq C\rho^{-2}$.

Based on the function $\varphi_1$, for each $K\in\mathcal{T}_h$ and $F\in\mathcal{E}_h$ we define $(\varphi_{1,h}|_K, \widehat{\varphi}_{1,h}|_F)\in\mathcal{P}^0(K)\times \mathcal{P}^0(F)$ as
\begin{align*}
\varphi_{1,h}|_K=\sup_{x\in K}\varphi_1(x),\quad \widehat{\varphi}_{1,h}|_F=\sup_{x\in F}\varphi_{1}(x).
\end{align*}
It is easy to observe that the defined functions $\varphi_{1,h}$ and $\widehat{\varphi}_{1,h}$ satisfy the following estimates
\begin{align}
\|\varphi_1-\varphi_{1,h}\|_{L^{\infty}(K)}\leq Ch\rho^{-1},\quad \|\varphi_1-\widehat{\varphi}_{1,h}\|_{L^{\infty}(F)}\leq Ch\rho^{-1}.\label{supera}
\end{align}

By defining
\begin{align*}
& e_{\Bq} = \Pi_W\Bq - \Bq_{h}, \quad e_{u} = \Pi_V u - u_{h}, \quad e_{\widehat{u}} = \Pi_{\widehat{V}}u - \widehat{u}_{h}, \\
& \delta_{\Bq} = \Pi_W\Bq - \Bq, \quad \delta_{u} = \Pi_Vu - u, \quad \delta_{\widehat{u}} = \Pi_{\widehat{V}}u - u,
\end{align*}
where $\Pi_W$, $\Pi_V$ and $\Pi_{\widehat{V}}$ denote the standard $L^2$-orthogonal projections onto $\BW(K)$, $V(K)$ and $\mathcal{P}^k(F)$ on each
$K\in\mathcal{T}_h$ and $F\in\mathcal{E}_h$. Then
we obtain the following error equations with (\ref{HDG}) and the definitions of $\Pi_W$, $\Pi_V$ and $\Pi_{\widehat{V}}$:
\begin{subequations}
\label{error_eqs}
\begin{align}
\label{error_eq1}
 &(e_{\Bq}, \Bw_h)_{\mathcal{T}_{h}} - (e_{u}, \nabla\cdot \Bw_h)_{\mathcal{T}_{h}}
+ \langle e_{\widehat{u}}, \Bw_h\cdot\Bn \rangle_{\partial\mathcal{T}_{h}} = 0, \\
\label{error_eq2}
 &(\partial_{t}e_{u}, v_h)_{\mathcal{T}_{h}} - (e_{\Bq}, \nabla v_h)_{\mathcal{T}_{h}}
+ \langle e_{\Bq}\cdot\Bn + \tau (\mathcal{L}e_{u} - e_{\widehat{u}}), v_h - \widehat{v}_h \rangle_{\partial\mathcal{T}_{h}}
=  \langle \delta_{\Bq}\cdot\Bn + \tau\mathcal{L}\delta_{u}, v_h - \widehat{v}_h \rangle_{\partial\mathcal{T}_{h}},
\end{align}
\end{subequations}
for any $t\in[0,T]$ and any $(\Bw_h,v_h,\widehat{v}_h) \in \BW_{h} \times V_{h} \times \widehat{V}_{h}$.

\begin{lemma}\label{H1}
Let $e_{\Bq}$, $e_u$ and $e_{\widehat{u}}$ satisfy (\ref{error_eq1}), then we have
\begin{align}
\|\nabla e_u\|^2_{L^2(K)}+h_K^{-1}\|e_u-e_{\widehat{u}}\|^2_{L^2(\partial K)}\leq C\big(\|e_{\Bq}\|_{L^2(K)}^2+\|\sqrt{\tau}(\mathcal{L}e_u-e_{\widehat{u}})\|^2_{L^2(\partial K)}\big),
\end{align}
for each $K\in\mathcal{T}_h$.
\end{lemma}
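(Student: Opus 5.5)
We prove the estimate element by element by testing (\ref{error_eq1}) on a single $K$ with suitable $\Bw_h\in\BW(K)$, extended by zero outside $K$. Integrating by parts on $K$ turns (\ref{error_eq1}) into
\begin{align*}
(e_{\Bq},\Bw_h)_K+(\nabla e_u,\Bw_h)_K-\langle e_u-e_{\widehat{u}},\Bw_h\cdot\Bn\rangle_{\partial K}=0,\qquad \forall \Bw_h\in\BW(K).
\end{align*}
The plan is to control $\nabla e_u$ and the jump $e_u-e_{\widehat{u}}$ on $\partial K$ together. We split $e_u-e_{\widehat{u}}=(e_u-\mathcal{L}e_u)+(\mathcal{L}e_u-e_{\widehat{u}})$. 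On faces with $\tau\sim h_K^{-1}$, the piece $\mathcal{L}e_u-e_{\widehat{u}}$ is bounded by $h_K^{1/2}\|\sqrt{\tau}(\mathcal{L}e_u-e_{\widehat{u}})\|_{L^2(\partial K)}$. For HDG-2 the piece $e_u-\Pi_{\widehat V}e_u$ is $O(h_K^{1/2}\|\nabla e_u\|_{L^2(K)})$ by the trace inequality (\ref{tra:1}) and the approximation property of $\Pi_{\widehat V}$.

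\textbf{Case HDG-2.} Here $\tau\sim h_K^{-1}$ on all of $\partial K$, but $\nabla e_u\in(\mathcal{P}^k)^d=\BW(K)$, so we take $\Bw_h=\nabla e_u$. We then need to control $\langle e_u-e_{\widehat{u}},\nabla e_u\cdot\Bn\rangle_{\partial K}$. Since $\nabla e_u\cdot\Bn\in\mathcal{P}^k(F)$, this equals $\langle \Pi_{\widehat V}e_u-e_{\widehat{u}},\nabla e_u\cdot\Bn\rangle_{\partial K}$. By the discrete trace inequality (Lemma \ref{inv} and (\ref{tra:1})) it is bounded by $Ch_K^{-1/2}\|\mathcal{L}e_u-e_{\widehat{u}}\|_{\partial K}\|\nabla e_u\|_K\le C\|\sqrt\tau(\mathcal{L}e_u-e_{\widehat{u}})\|_{\partial K}\|\nabla e_u\|_K$. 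This gives $\|\nabla e_u\|_K$, and the jump term follows from the splitting above.

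\textbf{Cases hybrid mixed DG and HDG-1.} Here $\mathcal{L}=\mathcal{I}$, and $\tau$ vanishes on all faces (mixed DG) or on all but $F_K^\ast$ (HDG-1), so the jump must be recovered from the equation itself. Since $e_u-e_{\widehat u}|_F\in\mathcal{P}^k(F)$ and $\nabla e_u\in(\mathcal{P}^{k-1})^d$, the plan is to choose $\Bw_h$ by a local lifting. For the mixed (RT) space we use the RT degrees of freedom to pick $\Bw_h\in\BW(K)$ with $\Bw_h\cdot\Bn=-h_K^{-1}(e_u-e_{\widehat{u}})$ on $\partial K$ and interior moments $(\Bw_h,\boldsymbol{p})_K=(\nabla e_u,\boldsymbol{p})_K$ for $\boldsymbol{p}\in(\mathcal{P}^{k-1})^d$. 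Scaling gives $\|\Bw_h\|_K\le C(\|\nabla e_u\|_K+h_K^{-1/2}\|e_u-e_{\widehat u}\|_{\partial K})$, and substituting this $\Bw_h$ and applying Cauchy–Schwarz yields the bound.

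For HDG-1 with $(\mathcal{P}^k)^d$ on polytopes, the normal traces of $(\mathcal{P}^k)^d$ cannot be prescribed freely on all faces. Instead we invoke the known fact from \cite{cfq2018}: the map $\Bw\mapsto(\Bw\cdot\Bn|_{\partial K\setminus F_K^\ast}, \Pi_{k-1}\Bw)$ is surjective onto $\prod_{F\ne F_K^\ast}\mathcal{P}^k(F)\times(\mathcal{P}^{k-1}(K))^d$. This lets us prescribe the normal trace on every face except $F_K^\ast$, where $\tau=h_K^{-1}$ supplies control. By a scaling and compactness argument on the finitely many sub-simplex configurations, the lifting is stable.

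\textbf{Main obstacle.} The HDG-1 lifting is the crux. Showing that the bound is uniform over polygonal/polyhedral elements requires the shape-regularity of the sub-triangulation and the bound $\mathcal{N}_d$ on the number of sub-elements. If that route fails, the fallback is a contradiction/compactness argument on the reference configurations. It uses that the right-hand side vanishing forces $\nabla e_u=0$ and $e_u=e_{\widehat u}$ on $\partial K$.
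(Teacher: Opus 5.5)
Your hybrid mixed DG step is the paper's own argument: the lifting of \cite[Lemma 3.1]{es2010} with normal trace $-h_K^{-1}(e_u-e_{\widehat{u}})$ and interior moments $(\nabla e_u,\cdot)_K$. Your HDG-2 step with $\Bw_h=\nabla e_u$ is a correct, self-contained replacement for the citation of \cite{qs2016}.

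The gap is HDG-1 on non-simplicial elements. For that case the paper gives no argument of its own and only cites \cite[Theorem 2.3]{cfq2018}. The surjectivity you attribute to that reference cannot hold there, by counting dimensions. By Pascal's rule, $\dim(\mathcal{P}^k(K))^d=d\binom{k+d}{d}=d\binom{k+d-1}{d}+d\binom{k+d-1}{d-1}$. The target $\prod_{F\neq F_K^\ast}\mathcal{P}^k(F)\times(\mathcal{P}^{k-1}(K))^d$ has dimension $d\binom{k+d-1}{d}+(N_K-1)\binom{k+d-1}{d-1}$, where $N_K$ is the number of faces of $K$. The two agree only when $N_K=d+1$. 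On a simplex the map is an isomorphism (the classical single-face projection), and your lifting argument is complete, with the $F_K^\ast$ contribution absorbed through $\tau=h_K^{-1}$. On any element with more faces the map is not onto, and no scaling argument over sub-simplex configurations can change that.

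The compactness fallback also fails, because the kernel property it relies on is false. Take $K=(0,1)^2$, $k=1$, $F_K^\ast=\{x=0\}$ and $e_u=0$ on $K$. Set $e_{\widehat{u}}=x-\tfrac12$ on the edges $\{y=0\}$ and $\{y=1\}$, and $e_{\widehat{u}}=0$ on $\{x=0\}$ and $\{x=1\}$. Checking the six basis functions of $(\mathcal{P}^1(K))^2$ gives $\langle e_{\widehat{u}},\Bw\cdot\Bn\rangle_{\partial K}=0$ for every $\Bw\in(\mathcal{P}^1(K))^2$. The localized form of (\ref{error_eq1}) therefore forces $e_{\Bq}|_K=0$. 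Also $\sqrt{\tau}(e_u-e_{\widehat{u}})=0$ on $\partial K$, since $\tau$ vanishes off $F_K^\ast$ and $e_{\widehat{u}}$ vanishes on $F_K^\ast$. So the right-hand side of the lemma is $0$, while $h_K^{-1}\|e_u-e_{\widehat{u}}\|^2_{L^2(\partial K)}=\tfrac16 h_K^{-1}>0$.

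This is a counterexample to the HDG-1 case of the statement itself on square elements, which are admissible polygons made of two triangles. The problem is therefore not only with your route. To make the HDG-1 case rigorous you must either restrict it to simplicial meshes, where your argument works, or change the local spaces or the stabilization on polytopal elements.
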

\begin{proof}
For HDG-1 and HDG-2 methods, the proof can be found in \cite[Theorem 2.3]{cfq2018} and \cite[Lemma 3.2]{qs2016}. As for hybrid mixed DG methods, we know from
\cite[Lemma 3.1]{es2010} that there exists a unique solution $\tilde{\Bq}_h\in \BW_h$ such that
\begin{subequations}\label{H1-proof}
\begin{align}
(\tilde{\Bq}_h,\Bp_h)_K=&(\nabla v_h,\Bp_h)_K,\quad \forall \Bp_h\in (\mathcal{P}^{k-1}(K))^d,\\
\langle\tilde{\Bq}_h\cdot\Bn,w_h\rangle_{\partial K}=&\langle\mu_h,w_h\rangle_{\partial K},\quad \forall w_h\in\mathcal{P}^k(\partial K),
\end{align}
\end{subequations}
for each $(v_h,\mu_h)\in V_h\times \widehat{V}_h$ and $K\in\mathcal{T}_h$. Moreover, there exists a constant depending on the shape regularity of
the partition $\mathcal{T}_h$ such that
\begin{align}
\|\tilde{\Bq}_h\|_{L^2(K)}\leq C\big(\|\nabla v_h\|_{L^2(K)}+h_K^{\frac{1}{2}}\|\mu_h\|_{L^2(\partial K)}\big).\label{H1-proof:1}
\end{align}
By setting $(v_h,\mu_h)=(e_u, -h_K^{-1}(e_u-e_{\widehat{u}}))$ in (\ref{H1-proof}), we have by (\ref{error_eq1})
\begin{align*}
\|\nabla e_u\|^2_{L^2(K)}=&-(e_{\Bq},\tilde{\Bq}_h)_K+\langle e_u-e_{\widehat{u}},\tilde{\Bq}_h\cdot\Bn\rangle_{\partial K}\\
=&-(e_{\Bq},\tilde{\Bq}_h)_K-h_K^{-1}\| e_u-e_{\widehat{u}}\|^2_{L^2(\partial K)},
\end{align*}
which, together with (\ref{H1-proof:1}), leads to the desired result.
\end{proof}

\begin{lemma}\label{local_energy_estimate1}
Let $e_{\Bq}$, $e_u$, $e_{\widehat{u}}$, $\delta_{\Bq}$ and $\delta_u$ be defined as above, then if $u\in L^2((0,T);H^{1+\gamma}(\Omega))$ with $\partial_t u\in
L^2((0,T);L^2(\Omega))$ we have
\begin{align}
&\|e_{\Bq}\|_{L^2(I_0;L_h^2(\Omega_0))}+\|\nabla e_u\|_{L^2(I_0;L_h^2(\Omega_0))}+\Big(\int_{I_0}\sum_{K\in\mathcal{T}_h}\|\sqrt{\tau}(\mathcal{L}e_u-e_{\widehat{u}})\|^2_{L^2(\partial K\cap\Omega_0)} dt\Big)^{1/2}\nonumber\\
\leq & Ch\rho^{-1}\Big(\|e_{\Bq}\|_{L^2(I_1;L_h^2(\Omega_1))}
+\Big(\int_{I_1}\sum_{K\in\mathcal{T}_h(\Omega_2)}
\|\sqrt{\tau}(\mathcal{L}e_u-e_{\widehat{u}})\|^2_{L^2(\partial K)} dt\Big)^{1/2}\Big)\label{local_energy_estimate1:1}\\
&+C\|e_u(0)\|_{L_h^2(\Omega_1)}+C\rho^{-1}\|e_u\|_{L^2(I_1;L_h^2(\Omega_1))}+C\big(h^{-1}\|\delta_u\|_{L^2(I_1;L_h^2(\Omega_1))}\nonumber\\
&+\|\nabla\delta_u\|_{L^2(I_1;L_h^2(\Omega_1))}+\|\delta_{\Bq}\|_{L^2(I_1;L_h^2(\Omega_1))}
+h^{\gamma}\|\delta_{\Bq}\|_{L^2(I_1;H_h^{\gamma}(\Omega_1))}\big),\nonumber
\end{align}
where $\mathcal{T}_h(\Omega_2)=\{K\in\mathcal{T}_h: K\cap\Omega_2\neq \emptyset\}$.
\end{lemma}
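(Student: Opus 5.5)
Test the error equations (\ref{error_eqs}) with a weighted energy test function built from the cut-offs. Set $\omega=\varphi_1^2\varphi_2$. Since the only mesh-independent weights allowed without superapproximation are piecewise constants, the discrete test functions will be
\[
\Bw_h=\varphi_{1,h}^2\varphi_2\, e_{\Bq},\qquad v_h=\varphi_{1,h}^2\varphi_2\, e_u,\qquad \widehat v_h=\widehat\varphi_{1,h}^2\varphi_2\, e_{\widehat u}.
\]
These lie in $\BW_h\times V_h\times\widehat V_h$ because $\varphi_{1,h},\widehat\varphi_{1,h}$ are piecewise constant. With them, (\ref{error_eq1}) is tested with $\Bw_h$ and (\ref{error_eq2}) with $(v_h,\widehat v_h)$. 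Adding the two and integrating over $t\in(0,T)$ should reproduce the usual HDG energy identity, but carrying the weights.

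The left side will then control
\[
\int\!\varphi_2\big(\|\varphi_{1,h}e_{\Bq}\|^2+\|\sqrt{\tau}\varphi_{1,h}(\mathcal L e_u-e_{\widehat u})\|^2_{\partial\mathcal T_h}\big)\,dt
\]
together with $\tfrac12\|\varphi_{1,h}e_u(T')\|^2$. Two kinds of leftover terms remain.

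\textbf{Commutator terms.} These come from $\varphi_{1,h}\neq\widehat\varphi_{1,h}$ on faces. In particular, the cross terms $-(e_u,\nabla\cdot \Bw_h)+(e_{\Bq},\nabla v_h)$ and the face terms cancel exactly only when the weights coincide. Writing $\widehat\varphi_{1,h}^2-\varphi_{1,h}^2=O(h\rho^{-1})\,\widehat\varphi$-type differences on each $\partial K$ via (\ref{supera}), these become terms like
\[
h\rho^{-1}\langle |e_{\Bq}\cdot\Bn|+\tau|\mathcal L e_u-e_{\widehat u}|,\ |e_u-e_{\widehat u}|\rangle_{\partial K},
\]
supported in $\mathcal T_h(\Omega_2)$. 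I will bound them with the trace and inverse inequalities (Lemma \ref{inv}, Lemma \ref{tra}) and Lemma \ref{H1}, which controls $h_K^{-1/2}\|e_u-e_{\widehat u}\|_{\partial K}$ by $\|e_{\Bq}\|_K+\|\sqrt\tau(\mathcal L e_u-e_{\widehat u})\|_{\partial K}$. This yields precisely the $Ch\rho^{-1}(\cdots)$ term on $I_1\times\Omega_2\subset I_1\times\Omega_1$.

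\textbf{Time-derivative term.} The term $(\partial_t e_u,\varphi_{1,h}^2\varphi_2 e_u)$ gives, after integration by parts in time, $\rho^{-2}\|e_u\|^2_{L^2(I_1;L^2_h(\Omega_1))}$, which is of the form $(\rho^{-1}\|e_u\|)^2$. It also gives the initial value $\|e_u(0)\|^2$ when $I_2$ touches $t=0$ (there $\varphi_2(0)\neq0$).

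\textbf{Right-hand side of (\ref{error_eq2}).} This is $\langle\delta_{\Bq}\cdot\Bn+\tau\mathcal L\delta_u,v_h-\widehat v_h\rangle$. I will split $v_h-\widehat v_h=\varphi_{1,h}^2\varphi_2(e_u-e_{\widehat u})+(\varphi_{1,h}^2-\widehat\varphi_{1,h}^2)\varphi_2e_{\widehat u}$. For the first piece, (\ref{tra:2}) gives $\|\delta_{\Bq}\|_{\partial K}\lesssim h^{-1/2}(\|\delta_{\Bq}\|_K+h^\gamma|\delta_{\Bq}|_{H^\gamma(K)})$, while (\ref{tra:1}) handles $\tau\delta_u$, producing $h^{-1}\|\delta_u\|+\|\nabla\delta_u\|$. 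I then pair these against $h^{-1/2}\|e_u-e_{\widehat u}\|_{\partial K}$, controlled by Lemma \ref{H1}, and kick back with Young's inequality. The second piece is $O(h\rho^{-1})$ times $e_{\widehat u}$. I rewrite $e_{\widehat u}=e_u-(e_u-e_{\widehat u})$, use the inverse trace inequality $h^{-1/2}\|e_u\|_{\partial K}\lesssim h^{-1}\|e_u\|_K$, and absorb $h\rho^{-1}\cdot h^{-1}=\rho^{-1}$ into the $\rho^{-1}\|e_u\|$ term.

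\textbf{Closing.} The terms involving $\nabla e_u$ on $\Omega_0$ follow from Lemma \ref{H1} elementwise, since $\varphi_{1,h}=1$ near $\Omega_0$ (using $r$ large). Replacing $\varphi_{1,h}$ by $1$ on $\Omega_0$ and $\varphi_2$ by $1$ on $I_0$ then completes the estimate.

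\textbf{Main obstacle.} I expect the hard step to be the face commutator bookkeeping. The hybrid terms $\langle e_{\Bq}\cdot\Bn,\ \varphi_{1,h}^2e_u-\widehat\varphi_{1,h}^2 e_{\widehat u}\rangle$ must be rearranged so that they cancel against $\langle e_{\widehat u},\Bw_h\cdot\Bn\rangle$ up to $O(h\rho^{-1})$ remainders. For this I will use that $\langle e_{\Bq}\cdot\Bn+\tau(\cdot),\widehat\varphi_{1,h}^2e_{\widehat u}\rangle_{\partial\mathcal T_h}$ involves single-valued $\widehat\varphi_{1,h}$. Throughout I will deliberately avoid any superapproximation of $\varphi_1^2 e$.
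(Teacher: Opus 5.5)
Your proposal is correct and follows essentially the paper's proof. It uses the same piecewise-constant sup-cutoffs $\varphi_{1,h},\widehat{\varphi}_{1,h}$; note that $\widehat{\varphi}_{1,h}\le\varphi_{1,h}$ on $\partial K$, which is what lets the $O(h\rho^{-1})$ commutators be absorbed into the weighted left-hand side. It also uses the same test functions, up to your harmless choice of $\varphi_2$ in place of $\varphi_2^2$, and the same bookkeeping with (\ref{supera}), Lemmas \ref{inv}, \ref{tra}, \ref{H1}, Young's inequality and the handling of $e_u(0)$ when $0$ is an endpoint of $I_1$.

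The only imprecision is in your description of the commutator. Up to sign, the face commutator is $\langle e_{\Bq}\cdot\Bn+\tau(\mathcal{L}e_u-e_{\widehat{u}}),(\varphi_{1,h}^2-\widehat{\varphi}_{1,h}^2)\varphi_2\, e_{\widehat{u}}\rangle_{\partial\mathcal{T}_h}$. So the splitting $e_{\widehat{u}}=e_u-(e_u-e_{\widehat{u}})$ that you describe for the right-hand side is needed there as well. Its $e_u$-part (the paper's $J_3$ and $J_4$) yields the $\rho^{-1}\|e_u\|_{L^2(I_1;L_h^2(\Omega_1))}$ contribution via the inverse trace inequality.
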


\begin{proof}
By choosing $\Bw_h = (\varphi_{1,h} \varphi_{2})^{2}e_{\Bq}$, $v_h = (\varphi_{1,h} \varphi_{2})^{2}e_{u}$,
and $\widehat{v}_h = (\widehat{\varphi}_{1,h} \varphi_{2})^{2}e_{\widehat{u}}$ in the error equation (\ref{error_eqs}), we have
\begin{align*}
&(\partial_{t}e_{u}, (\varphi_{1,h} \varphi_{2})^{2}e_{u})_{\mathcal{T}_{h}}
- (e_{\Bq}, (\varphi_{1,h} \varphi_{2})^{2}\nabla e_{u})_{\mathcal{T}_{h}}
+ \langle e_{\Bq}\cdot\Bn + \tau (\mathcal{L}e_{u} - e_{\widehat{u}}),
(\varphi_{1,h} \varphi_{2})^{2}e_{u} - (\widehat{\varphi}_{1,h}
\varphi_{2})^{2}e_{\widehat{u}}\rangle_{\partial\mathcal{T}_{h}} \\
& \qquad + (\nabla e_{u}, (\varphi_{1,h} \varphi_{2})^{2}e_{\Bq})_{\mathcal{T}_{h}}
+ (e_{\Bq}, (\varphi_{1,h} \varphi_{2})^{2}e_{\Bq})_{\mathcal{T}_{h}}
- \langle e_{u} - e_{\widehat{u}}, (\varphi_{1,h} \varphi_{2})^{2}e_{\Bq}\cdot\Bn \rangle_{\partial\mathcal{T}_{h}} \\
&=  \langle \delta_{\Bq}\cdot\Bn + \tau\mathcal{L}\delta_{u},
(\varphi_{1,h} \varphi_{2})^{2}e_{u} - (\widehat{\varphi}_{1,h} \varphi_{2})^{2}
e_{\widehat{u}} \rangle_{\partial\mathcal{T}_{h}}.
\end{align*}
By a simple calculation and rearranging the terms in the above equation, we have
\begin{align}
& \frac{1}{2}\frac{d}{dt}\|(\varphi_{1,h} \varphi_{2})e_{u}\|^2_{L^2(\Omega)}
+ (e_{\Bq}, (\varphi_{1,h} \varphi_{2})^{2}e_{\Bq})_{\mathcal{T}_{h}}
+ \langle \tau (\mathcal{L}e_{u} - e_{\widehat{u}}),
(\varphi_{1,h} \varphi_{2})^{2}(\mathcal{L}e_{u} - e_{\widehat{u}})\rangle_{\partial\mathcal{T}_{h}} \nonumber\\
= & (\varphi_{1,h} e_u\partial_t\varphi_2,(\varphi_{1,h}\varphi_2)e_u)_{\mathcal{T}_h}+\langle \tau(\mathcal{L}e_{u} - e_{\widehat{u}}), ((\varphi_{1,h}\varphi_{2})^{2} - (\widehat{\varphi}_{1,h}\varphi_{2})^{2})
(e_{u} - e_{\widehat{u}})\rangle_{\partial\mathcal{T}_{h}}  \nonumber\\
& - \langle \tau(\mathcal{L}e_{u} - e_{\widehat{u}}), ((\varphi_{1,h}\varphi_{2})^{2} - (\widehat{\varphi}_{1,h}\varphi_{2})^{2})
e_{u} \rangle_{\partial\mathcal{T}_{h}} \label{local_energy_identity1}\\
& - \langle e_{\Bq}\cdot\Bn, ((\varphi_{1,h}\varphi_{2})^{2} - (\widehat{\varphi}_{1,h}\varphi_{2})^{2})
e_{u} \rangle_{\partial\mathcal{T}_{h}} \nonumber\\
& + \langle e_{\Bq}\cdot\Bn, ((\varphi_{1,h}\varphi_{2})^{2} - (\widehat{\varphi}_{1,h}\varphi_{2})^{2})
(e_{u} - e_{\widehat{u}})\rangle_{\partial\mathcal{T}_{h}} \nonumber\\
& + \langle \delta_{\Bq}\cdot\Bn + \tau\mathcal{L}\delta_{u},
(\varphi_{1,h} \varphi_{2})^{2}e_{u} - (\widehat{\varphi}_{1,h} \varphi_{2})^{2}
e_{\widehat{u}} \rangle_{\partial\mathcal{T}_{h}} =\sum_{i=1}^6 J_i.\nonumber
\end{align}
Next we are going to estimate $J_i~(i=1,2,\cdots,6)$ separately. By (\ref{supera}), Lemma \ref{inv}, Lemma \ref{tra} and Young's inequality, we deduce that
\begin{align}
J_1+J_2\leq& C\varphi_2\rho^{-2}\|\varphi_{1,h}e_u\|^2_{L^2(\Omega)}+\varphi^2_2\langle\tau(\mathcal{L}e_u-e_{\widehat{u}}),(\varphi_{1,h}-\widehat{\varphi}_{1,h})
(\varphi_{1,h}+\widehat{\varphi}_{1,h})(e_u-e_{\widehat{u}})\rangle_{\partial\mathcal{T}_h}\nonumber\\
\leq& C\varphi_2\rho^{-2}\|\varphi_{1,h}e_u\|^2_{L^2(\Omega)}+\frac{C}{\epsilon}\varphi_2^2(h\rho^{-1})^2\sum_{K\in\mathcal{T}_h(\Omega_2)}\|\sqrt{\tau}(e_u-e_{\widehat{u}})\|^2_{L^2(\partial K)}\label{local_energy_identity2}\\
&+\epsilon\sum_{K\in\mathcal{T}_h}\|\sqrt{\tau}(\varphi_{1,h}\varphi_2)(\mathcal{L}e_u-e_{\widehat{u}})\|^2_{L^2(\partial K)},\nonumber
\end{align}
and
\begin{align}
J_3\leq& C\varphi_2h\rho^{-1}\sum_{K\in\mathcal{T}_h}\|\sqrt{\tau}(\varphi_{1,h}\varphi_2)(\mathcal{L}e_u-e_{\widehat{u}})\|_{L^2(\partial K)}\|\sqrt{\tau}e_u\|_{L^2(\partial K)}\nonumber\\
\leq& C\varphi_2\rho^{-1}\sum_{K\in\mathcal{T}_h}\|\sqrt{\tau}(\varphi_{1,h}\varphi_2)(\mathcal{L}e_u-e_{\widehat{u}})\|_{L^2(\partial K)}\|e_u\|_{L^2(K)}\label{local_energy_identity3}\\
\leq& \epsilon\sum_{K\in\mathcal{T}_h}\|\sqrt{\tau}(\varphi_{1,h}\varphi_2)(\mathcal{L}e_u-e_{\widehat{u}})\|_{L^2(\partial K)}^2+\frac{C}{\epsilon}\varphi_2^2\rho^{-2}
\|e_u\|^2_{L^2_h(\Omega_1)},\nonumber
\end{align}
where $\epsilon>0$ (independent of $h$ and $\rho$) is a constant that can be arbitrary small. Similarly, with (\ref{supera}), Lemma \ref{inv}, Lemma \ref{tra}, Lemma \ref{H1} and Young's inequality, we also can obtain that
\begin{align}
J_4+J_5\leq& C\varphi_2h\rho^{-1}\sum_{K\in\mathcal{T}_h}\|(\varphi_{1,h}\varphi_2)e_{\Bq}\|_{L^2(\partial K)}\big(\|e_u\|_{L^2(\partial K)}
+\|e_u-e_{\widehat{u}}\|_{L^2(\partial K)}\big)\nonumber\\
\leq&\epsilon\|(\varphi_{1,h}\varphi_2)e_{\Bq}\|_{L^2(\Omega)}^2+\frac{C}{\epsilon}\varphi_2^2\rho^{-2}\|e_u\|^2_{L_h^2(\Omega_1)}+\frac{C}{\epsilon}
\varphi_2^2(h\rho^{-1})^2\sum_{K\in\mathcal{T}_h(\Omega_2)}h_K^{-1}\|e_u-e_{\widehat{u}}\|_{L^2(\partial K)}^2\label{local_energy_identity4}\\
\leq&\epsilon\|(\varphi_{1,h}\varphi_2)e_{\Bq}\|_{L^2(\Omega)}^2+\frac{C}{\epsilon}\varphi_2^2\rho^{-2}\|e_u\|^2_{L_h^2(\Omega_1)}\nonumber\\
&+\frac{C}{\epsilon}
\varphi_2^2(h\rho^{-1})^2\Big(\|e_{\Bq}\|^2_{L^2_h(\Omega_1)}+\sum_{K\in\mathcal{T}_h(\Omega_2)}\|\sqrt{\tau}(\mathcal{L}e_u-e_{\widehat{u}})\|^2_{L^2(\partial K)}\Big),\nonumber
\end{align}
and
\begin{align}
J_6=&\langle\delta_{\Bq}\cdot\textbf{n}+\tau\mathcal{L}\delta_u,\varphi_2^2(\varphi_{1,h}^2-\widehat{\varphi}_{1,h}^2)e_u\rangle_{\partial\mathcal{T}_h}
+\langle\delta_{\Bq}\cdot\textbf{n}+\tau\mathcal{L}\delta_u,(\widehat{\varphi}_{1,h}\varphi_2)^2(e_u-e_{\widehat{u}})\rangle_{\partial\mathcal{T}_h}\nonumber\\
\leq&C\varphi_2\sum_{K\in\mathcal{T}_h}\Big(\|\delta_{\Bq}\|_{L^2(\partial K)}+\|\tau\delta_u\|_{L^2(\partial K)}\Big)\Big(h\rho^{-1}\|\varphi_{1,h}e_u\|_{L^2(\partial K)}\nonumber\\
&+\|(\varphi_{1,h}\varphi_2)(e_u-e_{\widehat{u}})\|_{L^2(\partial K)}\Big)\nonumber\\
\leq&C\varphi_2\sum_{K\in\mathcal{T}_h}\Big(h_K^{-1}\|\delta_u\|_{L^2(K)}+\|\nabla\delta_u\|_{L^2(K)}+\|\delta_{\Bq}\|_{L^2(K)}
+h_K^{\gamma}|\delta_{\Bq}|_{H^{\gamma}(K)}\Big)\times\label{local_energy_identity5}\\
&\Big(\rho^{-1}\|\varphi_{1,h}e_u\|_{L^2(K)}+h_K^{-\frac{1}{2}}\|(\varphi_{1,h}\varphi_2)(e_u-e_{\widehat{u}})\|_{L^2(\partial K)}\Big)\nonumber\\
\leq&C\varphi_2\rho^{-2}\|e_u\|^2_{L_h^2(\Omega_1)}+C\varphi_2\left(h^{-2}\|\delta_u\|^2_{L_h^2(\Omega_1)}+\|\nabla\delta_u\|^2_{L_h^2(\Omega_1)}
+\|\delta_{\Bq}\|^2_{L_h^2(\Omega_1)}+h^{2\gamma}|\delta_{\Bq}|^2_{H_h^{\gamma}(\Omega_1)}\right)\nonumber\\
&+C\epsilon\left(\|(\varphi_{1,h}\varphi_2)e_{\Bq}\|^2_{L^2(\Omega)}+\sum_{K\in\mathcal{T}_h}
\|\sqrt{\tau}(\varphi_{1,h}\varphi_2)(\mathcal{L}e_u-e_{\widehat{u}})\|^2_{L^2(\partial K)}\right).\nonumber
\end{align}
Then collecting (\ref{local_energy_identity1})-(\ref{local_energy_identity5}), we arrive at
\begin{align}
&\frac{d}{dt}\|(\varphi_{1,h} \varphi_{2})e_{u}\|^2_{L^2(\Omega)}+\|(\varphi_{1,h}\varphi_2)e_{\Bq}\|^2_{L^2(\Omega)}+\sum_{K\in\mathcal{T}_h}
\|\sqrt{\tau}(\varphi_{1,h}\varphi_2)(\mathcal{L}e_u-e_{\widehat{u}})\|^2_{L^2(\partial K)}\nonumber\\
\leq& C\varphi_2\rho^{-2}\|e_u\|^2_{L_h^2(\Omega_1)}+C\varphi_2^2(h\rho^{-1})^2\Big(\|e_{\Bq}\|^2_{L_h^2(\Omega_1)}+\sum_{K\in\mathcal{T}_h(\Omega_2)}
\|\sqrt{\tau}(\mathcal{L}e_u-e_{\widehat{u}})\|^2_{L^2(\partial K)}\Big)\label{local_energy_identity6}\\
&+C\varphi_2\left(h^{-2}\|\delta_u\|^2
_{L_h^2(\Omega_1)}+\|\nabla\delta_u\|^2_{L_h^2(\Omega_1)}+\|\delta_{\Bq}\|^2_{L_h^2(\Omega_1)}+h^{2\gamma}|\delta_{\Bq}|^2_{H_h^{\gamma}(\Omega_1)}\right),\nonumber
\end{align}
by setting $\epsilon$ to be sufficiently small.

If $0$ is not the left endpoint of $I_1$, then it follows from the definition of $\varphi_2$ that $\varphi_2=0$ at that endpoint. Hence,
\begin{align*}
\int_{I_1}\frac{d}{dt}\|(\varphi_{1,h} \varphi_{2})e_{u}\|^2_{L^2(\Omega)} dt=\|(\varphi_{1,h}\varphi_2(t^{\ast}))e_u(t^{\ast})\|^2_{L^2(\Omega)}\geq 0,
\end{align*}
where $t^{\ast}$ denotes the right endpoint of $I_1$.
If $0$ is the left endpoint of $I_1$, then by the definition of $\varphi_2$ we have $0\leq \varphi_2(0)\leq 1$ and
\begin{align*}
\int_{I_1}\frac{d}{dt}\|(\varphi_{1,h} \varphi_{2})e_{u}\|^2_{L^2(\Omega)} dt=&\|(\varphi_{1,h}\varphi_2(t^{\ast}))e_u(t^{\ast})\|^2_{L^2(\Omega)}-
\|(\varphi_{1,h}\varphi_2(0))e_u(0)\|^2_{L^2(\Omega)}\\
\geq& -\|\varphi_{1,h}e_u(0)\|^2_{L^2(\Omega)}.
\end{align*}
Therefore, applying Lemma \ref{H1} and the above two inequalities and integrating (\ref{local_energy_identity6}) over $I_1$ in time we obtain
\begin{align*}
&\|e_{\Bq}\|_{L^2(I_0;L_h^2(\Omega_0))}+\|\nabla e_u\|_{L^2(I_0;L_h^2(\Omega_0))}+\Big(\int_{I_0}\sum_{K\in\mathcal{T}_h}\|\sqrt{\tau}(\mathcal{L}e_u-e_{\widehat{u}})\|^2_{L^2(\partial K\cap\Omega_0)} dt\Big)^{1/2}\\
\leq & Ch\rho^{-1}\Big(\|e_{\Bq}\|_{L^2(I_1;L_h^2(\Omega_1))}
+\Big(\int_{I_1}\sum_{K\in\mathcal{T}_h(\Omega_2)}
\|\sqrt{\tau}(\mathcal{L}e_u-e_{\widehat{u}})\|^2_{L^2(\partial K)} dt\Big)^{1/2}\Big)\\
&+C\|e_u(0)\|_{L_h^2(\Omega_1)}+C\rho^{-1}\|e_u\|_{L^2(I_1;L_h^2(\Omega_1))}+C\big(h^{-1}\|\delta_u\|_{L^2(I_1;L_h^2(\Omega_1))}\\
&+\|\nabla\delta_u\|_{L^2(I_1;L_h^2(\Omega_1))}+\|\delta_{\Bq}\|_{L^2(I_1;L_h^2(\Omega_1))}
+h^{\gamma}\|\delta_{\Bq}\|_{L^2(I_1;H_h^{\gamma}(\Omega_1))}\big).
\end{align*}
\end{proof}

\begin{lemma}\label{local_energy_estimate2}
Let $e_{\Bq}$, $e_u$, $e_{\widehat{u}}$, $\delta_{\Bq}$ and $\delta_u$ be defined as above, then if $u\in L^{\infty}((0,T);H^{1+\gamma}(\Omega))$ with $\partial_t u\in L^{2}((0,T);H^{1+\gamma}(\Omega))$ we have
\begin{align}
\|\partial_t e_u\|_{L^2(I_0;L_h^2(\Omega_0))}\leq & C\rho^{-1}\Big(\|e_{\Bq}\|_{L^2(I_1;L_h^2(\Omega_1))}+\Big(\int_{I_1}\sum_{K\in\mathcal{T}_h(\Omega_2)}\|\sqrt{\tau}(\mathcal{L}e_u-e_{\widehat{u}})\|^2_{L^2(\partial K)}dt\Big)^{1/2}\Big)\nonumber\\
&+Ch\Big(\|\partial_t e_{\Bq}\|_{L^2(I_1;L_h^2(\Omega_1))}+\Big(\int_{I_1}\sum_{K\in\mathcal{T}_h(\Omega_2)}\|\sqrt{\tau}\partial_t(\mathcal{L}e_u-e_{\widehat{u}})\|^2_{L^2(\partial K)}dt\Big)^{1/2}\Big)\nonumber\\
&+C\Big(h^{-1}\rho^{-1}\|\delta_u\|_{L^2(I_1;L_h^2(\Omega_1))}+\rho^{-1}\|\nabla \delta_u\|_{L^2(I_1;L_h^2(\Omega_1))}+h^{-1}\rho\|\partial_t\delta_u\|_{L^2(I_1;L_h^2(\Omega_1))}\label{local_energy_estimates2:1}\\
&+\rho\|\nabla\partial_t\delta_u\|_{L^2(I_1;L_h^2(\Omega_1))}+\rho^{-1}\|\delta_{\Bq}\|_{L^2(I_1;L_h^2(\Omega_1))}+h^{\gamma}\rho^{-1}
\|\delta_{\Bq}\|_{L^2(I_1;H_h^{\gamma}(\Omega_1))}
\nonumber\\
&+\rho\|\partial_t\delta_{\Bq}\|_{L^2(I_1;L_h^2(\Omega_1))}+h^{\gamma}\rho\|\partial_t\delta_{\Bq}\|_{L^2(I_1;H_h^{\gamma}(\Omega_1))}\Big)+C\Big(\|\delta_{\Bq}\|_{L^{\infty}(I_1;L_h^2(\Omega_1))}\nonumber\\
&+h^{\gamma}\|\delta_{\Bq}\|_{L^{\infty}(I_1;H_h^{\gamma}(\Omega_1))}+h^{-1}\|\delta_{u}\|_{L^{\infty}(I_1;L_h^2(\Omega_1))}+\|\nabla\delta_{u}\|_{L^{\infty}(I_1;L_h^2(\Omega_1))}\Big)\nonumber\\
&+C\Big(\|e_{\Bq}(0)\|_{L_h^2(\Omega_1)}+\Big(\sum_{K\in\mathcal{T}_h(\Omega_2)}\|\sqrt{\tau}(\mathcal{L}e_u-e_{\widehat{u}})(0)\|^2_{L^2(\partial K)}\Big)^{1/2}\Big).\nonumber
\end{align}
\end{lemma}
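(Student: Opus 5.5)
The plan is to mimic the proof of Lemma \ref{local_energy_estimate1}, but now testing the error equations with time derivatives. The key identity comes from differentiating (\ref{error_eq1}) in time, which is legitimate since $\partial_t u\in L^2((0,T);H^{1+\gamma}(\Omega))$. We then choose $\Bw_h=(\varphi_{1,h}\varphi_2)^2 e_{\Bq}$ in the differentiated equation, and $v_h=(\varphi_{1,h}\varphi_2)^2\partial_t e_u$, $\widehat v_h=(\widehat\varphi_{1,h}\varphi_2)^2\partial_t e_{\widehat u}$ in (\ref{error_eq2}), and add the two. The coupling terms $(e_{\Bq},\nabla\partial_t e_u)$ and $\langle \partial_te_u-\partial_te_{\widehat u},e_{\Bq}\cdot\Bn\rangle$ cancel exactly as before, up to commutator terms that carry the factor $\varphi_{1,h}^2-\widehat\varphi_{1,h}^2$. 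This leaves
\[
\|(\varphi_{1,h}\varphi_2)\partial_t e_u\|^2_{L^2(\Omega)}+\tfrac12\tfrac{d}{dt}\Big(\|(\varphi_{1,h}\varphi_2)e_{\Bq}\|^2_{L^2(\Omega)}+\sum_K\|\sqrt\tau(\varphi_{1,h}\varphi_2)(\mathcal Le_u-e_{\widehat u})\|^2_{L^2(\partial K)}\Big)
\]
on the left. On the right it leaves the terms $\varphi_2\partial_t\varphi_2(\cdots)$, the commutator terms, and the consistency term $\langle\delta_{\Bq}\cdot\Bn+\tau\mathcal L\delta_u,(\varphi_{1,h}\varphi_2)^2\partial_te_u-(\widehat\varphi_{1,h}\varphi_2)^2\partial_te_{\widehat u}\rangle_{\partial\mathcal T_h}$.

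Next we estimate the right-hand side term by term.

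\emph{Terms from $\partial_t\varphi_2$.} These are bounded by $C\rho^{-2}$ times $\|e_{\Bq}\|^2$ plus the $\tau$-seminorm on $\Omega_1\times I_1$. After integrating in time and taking square roots, they yield the $C\rho^{-1}(\cdots)$ term in (\ref{local_energy_estimates2:1}).

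\emph{Commutator terms.} These involve $\partial_t e_u$, $\partial_t e_{\widehat u}$ against $e_{\Bq}$, $\tau(\mathcal Le_u-e_{\widehat u})$, and the symmetric counterparts, all with a factor $h\rho^{-1}$ from (\ref{supera}). We handle them in two ways.
\begin{itemize}
\item Where $\partial_t e_u$ appears alone, we use the inverse and trace inequalities to absorb it into $\epsilon\|(\varphi_{1,h}\varphi_2)\partial_te_u\|^2$, paying $C\rho^{-2}$ times the energy quantities.
\item Where $\partial_t(e_u-e_{\widehat u})$ appears, we apply Lemma \ref{H1} to the time-differentiated error equation. This bounds $h^{-1/2}\|\partial_t(e_u-e_{\widehat u})\|_{L^2(\partial K)}$ by $\|\partial_te_{\Bq}\|_{L^2(K)}+\|\sqrt\tau\partial_t(\mathcal Le_u-e_{\widehat u})\|_{L^2(\partial K)}$. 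Young's inequality then splits the product as $h^2(\partial_t\text{-quantities})^2+\rho^{-2}(\text{energy})^2$, which produces the $Ch(\cdots)$ term with $\partial_te_{\Bq}$.
\end{itemize}

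\emph{Consistency term.} We split it as in $J_6$. The part with $\partial_t e_u$ times $\varphi_{1,h}^2-\widehat\varphi_{1,h}^2$ is absorbed as above. The part with $(\widehat\varphi_{1,h}\varphi_2)^2\partial_t(e_u-e_{\widehat u})$ is the main obstacle: using Lemma \ref{H1} directly would cost $\partial_te_{\Bq}$ with an $O(1)$ factor, not the factor $h$ required. Instead, we integrate by parts in time over $I_1$:
\[
\int_{I_1}\langle\delta,\varphi_2^2\widehat\varphi_{1,h}^2\partial_t(e_u-e_{\widehat u})\rangle\,dt=\big[\langle\delta,\cdots(e_u-e_{\widehat u})\rangle\big]-\int_{I_1}\langle\partial_t(\varphi_2^2\delta),\widehat\varphi_{1,h}^2(e_u-e_{\widehat u})\rangle\,dt .
\]
The interior integral is controlled by Lemma \ref{H1}, the trace inequality (\ref{tra:2}) and Young's inequality. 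The $\partial_t\varphi_2$ part gives the $\rho^{-1}\delta$-terms, and the $\partial_t\delta$ part, weighted by $\rho$ and $\rho^{-1}$, gives the $\rho\,\partial_t\delta$-terms.

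\emph{Boundary terms in time.} The endpoint terms, together with the left endpoint $0$ of the energy $\tfrac{d}{dt}$, are handled as follows. At the right endpoint we use Young's inequality against the positive left-hand energy at $t^\ast$; this produces the $L^\infty(I_1)$ norms of $\delta_{\Bq}$ and $\delta_u$. At $t=0$ (when $0\in\partial I_1$) we bound by $\|e_{\Bq}(0)\|$ and the $\tau$-seminorm at $0$, together with the $L^\infty$ $\delta$-terms.

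Finally, we integrate the resulting differential inequality over $I_1$, choose $\epsilon$ small, and restrict the left side to $I_0\times\Omega_0$, where $\varphi_{1,h}\varphi_2=1$. Taking square roots gives (\ref{local_energy_estimates2:1}).
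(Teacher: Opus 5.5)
Your proposal is correct and follows the paper's proof essentially step by step. It uses the same test functions, $(\varphi_{1,h}\varphi_2)^2e_{\Bq}$ in the time-differentiated (\ref{error_eq1}) and $(\varphi_{1,h}\varphi_2)^2\partial_te_u$, $(\widehat\varphi_{1,h}\varphi_2)^2\partial_te_{\widehat u}$ in (\ref{error_eq2}). It treats the $\partial_t\varphi_2$ terms the same way, and splits the commutator via $\partial_te_{\widehat u}=\partial_te_u-\partial_t(e_u-e_{\widehat u})$ with Lemma \ref{H1} applied to the differentiated equation. It also rewrites the $(\widehat\varphi_{1,h}\varphi_2)^2\partial_t(e_u-e_{\widehat u})$ consistency piece by the product rule in time, as the paper does. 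Your explicit treatment of the resulting endpoint terms fills in what the paper compresses into its final integration over $I_1$: absorption into the energy at $t^\ast$, which produces the $L^\infty(I_1)$ norms of $\delta_{\Bq},\delta_u$, and the $e_{\Bq}(0)$ and $\tau$-seminorm contributions at $t=0$.
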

\begin{proof}
By calculating the derivative with respect to the time from both sides of (\ref{error_eq1}), we derive that
\begin{align*}
(\partial_t e_{\Bq},\Bw_h)_{\mathcal{T}_h}+(\nabla \partial_t e_u,\Bw_h)_{\mathcal{T}_h}-\langle\partial_t(e_u-e_{\widehat{u}}),\Bw_h\cdot\Bn\rangle
_{\partial\mathcal{T}_h}=0,\quad \forall \Bw_h\in \BW_h.
\end{align*}
Then we set $\Bw_h=(\varphi_{1,h}\varphi_2)^2 e_{\Bq}$ in the above equation and $v_h=(\varphi_{1,h}\varphi_2)^2\partial_t e_u$ and
$\widehat{v}_h=(\widehat{\varphi}_{1,h}\varphi_2)^2\partial_t e_{\widehat{u}}$ in (\ref{error_eq2}) to infer that
\begin{align}
&\|(\varphi_{1,h}\varphi_2)\partial_t e_u\|^2_{L^2(\Omega)}+\frac{1}{2}\frac{d}{dt}\|(\varphi_{1,h}\varphi_2)e_{\Bq}\|^2_{L^2(\Omega)}+
\frac{1}{2}\frac{d}{dt}\sum_{K\in\mathcal{T}_h}\|\sqrt{\tau}(\varphi_{1,h}\varphi_2)(\mathcal{L}e_u-e_{\widehat{u}})\|_{L^2(\partial K)}^2\nonumber\\
=&-\langle\tau(\mathcal{L}e_u-e_{\widehat{u}}),(\varphi_{1,h}^2-\widehat{\varphi}_{1,h}^2)\varphi_2^2\partial_t e_{\widehat{u}}\rangle_{\partial\mathcal{T}_h}
-\langle e_{\Bq}\cdot\Bn,(\varphi_{1,h}^2-\widehat{\varphi}_{1,h}^2)\varphi_2^2\partial_t e_{\widehat{u}}\rangle_{\partial\mathcal{T}_h}\label{local_energy_estimate2-proof:1}\\
&+\langle\delta_{\Bq}\cdot\Bn+\tau\mathcal{L}\delta_u, (\varphi_{1,h}\varphi_2)^2\partial_t e_u-(\widehat{\varphi}_{1,h}\varphi_2)^2\partial_t e_{\widehat{u}}\rangle
_{\partial\mathcal{T}_h}\nonumber\\
&+\varphi_2\partial_t\varphi_2\|\varphi_{1,h}e_{\Bq}\|^2_{L^2(\Omega)}+\varphi_2\langle\tau(\varphi_{1,h}(\mathcal{L}e_u-e_{\widehat{u}}))^2,
\partial_t\varphi_2\rangle_{\partial\mathcal{T}_h}=\sum_{i=7}^{11}J_i.\nonumber
\end{align}

Next we are going to estimate $J_i~(i=7,\cdots,11)$ separately. By the definition of $\varphi_2$, we have
\begin{align}
J_{10}+J_{11}\leq C\varphi_2\rho^{-2}\left(\|\varphi_{1,h}e_{\Bq}\|^2_{L^2(\Omega)}+\sum_{K\in\mathcal{T}_h}\|\sqrt{\tau}\varphi_{1,h}(\mathcal{L}e_u-e_{\widehat{u}})\|^2_{L^2(\partial K)}\right).\label{local_energy_estimate2-proof:2}
\end{align}
By (\ref{supera}), Lemma \ref{inv}, Lemma \ref{tra}, Lemma \ref{H1} and Young's inequality, we have
\begin{align}
J_{9}=& \langle\delta_{\Bq}\cdot\textbf{n}+\tau\mathcal{L}\delta_u, (\varphi_{1,h}^2-\widehat{\varphi}_{1,h}^2)\varphi_2^2\partial_t e_u\rangle_{\partial\mathcal{T}_h}
+\langle\delta_{\Bq}\cdot\textbf{n}+\tau\mathcal{L}\delta_u, (\widehat{\varphi}_{1,h}\varphi_2)^2\partial_t (e_u-e_{\widehat{u}})\rangle_{\partial\mathcal{T}_h}\nonumber\\
\leq & C\varphi_2h\rho^{-1}\sum_{K\in\mathcal{T}_h}\big(h_K^{-\frac{1}{2}}\|\delta_{\Bq}\|_{L^2(K)}+h_K^{\gamma-\frac{1}{2}}|\delta_{\Bq}|_{H^{\gamma}(K)}
+\|\tau\delta_u\|_{L^2(\partial K)}\big)\|(\varphi_{1,h}\varphi_2)\partial_t e_u\|_{L^2(\partial K)}\nonumber\\
&+\frac{\partial}{\partial t}\langle\delta_{\Bq}\cdot\textbf{n}+\tau\mathcal{L}\delta_u,(\widehat{\varphi}_{1,h}\varphi_2)^2(e_u-e_{\widehat{u}})\rangle_{\partial\mathcal{T}_h}-
\langle\partial_t\delta_{\Bq}\cdot\textbf{n}+\tau\mathcal{L}(\partial_t\delta_u),(\widehat{\varphi}_{1,h}\varphi_2)^2(e_u-e_{\widehat{u}})\rangle_{\partial\mathcal{T}_h}\nonumber\\
&-2\varphi_2\partial_t\varphi_2\langle\delta_{\Bq}\cdot\textbf{n}+\tau\mathcal{L}\delta_u,\widehat{\varphi}_{1,h}^2(e_u-e_{\widehat{u}})\rangle_{\partial\mathcal{T}_h}
\nonumber\\
\leq &\epsilon\|(\varphi_{1,h}\varphi_2)\partial_t e_u\|^2_{L^2(\Omega)}+C\varphi_2\big(\rho^{-2}\|\delta_{\Bq}\|_{L_h^2(\Omega_1)}^2+h^{2\gamma}\rho^{-2}
|\delta_{\Bq}|_{H_h^{\gamma}(\Omega_1)}^2+h^{-2}\rho^{-2}\|\delta_u\|_{L_h^2(\Omega_1)}^2\\
&+\rho^{-2}\|\nabla\delta_u\|^2_{L_h^2(\Omega_1)}+\rho^2\|\partial_t\delta_{\Bq}\|_{L_h^2(\Omega_1)}^2+h^{2\gamma}\rho^2
|\partial_t\delta_{\Bq}|^2_{H_h^{\gamma}(\Omega_1)}
+h^{-2}\rho^2\|\partial_t\delta_u\|^2_{L_h^2(\Omega_1)}\nonumber\\
&+\rho^2\|\nabla\partial_t\delta_u\|^2_{L_h^2(\Omega_1)}\big)+C\varphi_2\rho^{-2}\Big(\|e_{\Bq}\|^2_{L^2_h(\Omega_1)}+\sum_{K\in\mathcal{T}_h(\Omega_2)}
\|\sqrt{\tau}(\mathcal{L}e_u-e_{\widehat{u}})\|^2_{L^2(\partial K)}\Big)\nonumber\\
&+\frac{\partial}{\partial t}\langle\delta_{\Bq}\cdot\textbf{n}+\tau\mathcal{L}\delta_u,(\widehat{\varphi}_{1,h}\varphi_2)^2(e_u-e_{\widehat{u}})\rangle_{\partial\mathcal{T}_h},\nonumber
\end{align}
where $\epsilon$ (independent of $h$ and $\rho$) is a positive constant that can be arbitrary small.
Now we turn to estimate $J_7+J_8$. Similarly, by using (\ref{supera}), Lemma \ref{inv}, Lemma \ref{tra}, Lemma \ref{H1} and  Young's inequality, we can obtain that
\begin{align}
J_7+J_8=&\langle e_{\Bq}\cdot\textbf{n}+\tau(\mathcal{L}e_u-e_{\widehat{u}}),(\varphi_{1,h}^2-\widehat{\varphi}_{1,h}^2)\varphi_2^2\partial_t(e_u-e_{\widehat{u}})\rangle_{\partial\mathcal{T}_h}\nonumber\\
&-\langle e_{\Bq}\cdot\textbf{n}+\tau(\mathcal{L}e_u-e_{\widehat{u}}),(\varphi_{1,h}^2-\widehat{\varphi}_{1,h}^2)\varphi_2^2\partial_te_u\rangle_{\partial\mathcal{T}_h}\nonumber\\
\leq &\epsilon\|(\varphi_{1,h}\varphi_2)\partial_t e_u\|^2_{L^2(\Omega)}+C\varphi_2^2\rho^{-2}\Big(\|e_{\Bq}\|^2_{L_h^2(\Omega_1)}+\sum_{K\in\mathcal{T}_h(\Omega_2)}\|\sqrt{\tau}(\mathcal{L}e_u-e_{\widehat{u}})\|^2_{L^2(\partial K)}\Big)\label{local_energy_estimate2-proof:4}\\
&+C\varphi_2^2h^2\sum_{K\in\mathcal{T}_h(\Omega_2)}h_K^{-1}\|\partial_t(e_u-e_{\widehat{u}})\|^2_{L^2(\partial K)} \nonumber\\
\leq &\epsilon\|(\varphi_{1,h}\varphi_2)\partial_t e_u\|^2_{L^2(\Omega)}+C\varphi_2^2\rho^{-2}\Big(\|e_{\Bq}\|^2_{L_h^2(\Omega_1)}+\sum_{K\in\mathcal{T}_h(\Omega_2)}\|\sqrt{\tau}(\mathcal{L}e_u-e_{\widehat{u}})\|^2_{L^2(\partial K)}\Big)\nonumber\\
&+C\varphi_2^2h^2\Big(\|\partial_te_{\Bq}\|_{L_h^2(\Omega_1)}^2+\sum_{K\in\mathcal{T}_h(\Omega_2)}\|\sqrt{\tau}\partial_t(\mathcal{L}e_u-e_{\widehat{u}})\|^2_{L^2(\partial K)}\Big).\nonumber
\end{align}
Finally by applying (\ref{local_energy_estimate2-proof:2})-(\ref{local_energy_estimate2-proof:4}) to (\ref{local_energy_estimate2-proof:1}) and
doing time integration over $I_1$, we obtain the desired result.
\end{proof}
\begin{lemma}[local energy error estimate]\label{local_energy_estimate3}
Let $e_{u}$, $e_{\widehat{u}}$, $e_{\Bq}$, $\delta_u$, $\delta_{\Bq}$ be defined as above, then if $u\in L^{\infty}((0,T);H^{1+\gamma}(\Omega))$ with
$\partial_t u\in L^2((0,T);H^{1+\gamma}(\Omega))$ we have
\begin{align}
&\|\partial_t(u-u_h)\|_{L^2(I_0;L_h^2(\Omega_0))}+\rho^{-1}\Big(\|\nabla(u-u_h)\|_{L^2(I_0;L_h^2(\Omega_0))}+\mathcal{K}_0\Big)\nonumber\\
\leq&C(h\rho^{-1})^{\beta_2}\rho^{-1}\mathcal{K}+C\mathcal{F}+C\Big(\rho^{-1}\|e_u(0)\|_{L^2_h(\Omega_1)}
+h\|\partial_te_u(0)\|_{L_h^2(\Omega_1)}\Big)\nonumber\\
&+C\rho^{-2}\|u-u_h\|_{L^2(I_1;L_h^2(\Omega_1))}
+Ch(h\rho^{-1})^{\beta_1}\mathcal{J},\nonumber
\end{align}
for any real numbers $\beta_1,\beta_2>0$, where $C$ is a positive constant depending on $\beta_1$ and $\beta_2$,
\begin{align*}
\mathcal{J}^2=&\|\partial_t e_{\Bq}\|^2_{L^2(I_1;L_h^2(\Omega_1))}+\int_{I_1}\sum_{K\in\mathcal{T}_h(\Omega_2)}\|\sqrt{\tau}\partial_t(\mathcal{L}e_u-e_{\widehat{u}})\|^2_{L^2(\partial K)}dt,\\
\mathcal{K}^2_{0}=&\|\Bq-\Bq_h\|^2_{L^2(I_0;L_h^2(\Omega_0))}+\int_{I_0}\sum_{K\in\mathcal{T}_h}\|\sqrt{\tau}(\mathcal{L}u_h-\widehat{u}_h)\|^2_{L^2(\partial K\cap\Omega_0)} dt,\\
\mathcal{K}^2=&\|\Bq-\Bq_h\|^2_{L^2(I_1;L_h^2(\Omega_1))}+\int_{I_1}\sum_{K\in\mathcal{T}_h(\Omega_2)}\|\sqrt{\tau}(\mathcal{L}u_h-\widehat{u}_h)\|^2_{L^2(\partial K)} dt,\\
\mathcal{F}=&h^{-1}\rho^{-1}\|\delta_u\|_{L^2(I_1;L_h^2(\Omega_1))}+\rho^{-1}\|\nabla \delta_u\|_{L^2(I_1;L_h^2(\Omega_1))}+h^{-1}\rho\|\partial_t\delta_u\|_{L^2(I_1;L_h^2(\Omega_1))}\\
&+\rho\|\nabla\partial_t\delta_u\|_{L^2(I_1;L_h^2(\Omega_1))}+\rho^{-1}\|\delta_{\Bq}\|_{L^2(I_1;L_h^2(\Omega_1))}
+h^{\gamma}\rho^{-1}\|\delta_{\Bq}\|_{L^2(I_1;H_h^{\gamma}(\Omega_1))}\\
&+\rho\|\partial_t\delta_{\Bq}\|_{L^2(I_1;L_h^2(\Omega_1))}+h^{\gamma}\rho\|\partial_t\delta_{\Bq}\|
_{L^2(I_1;H_h^{\gamma}(\Omega_1))}+\|\delta_{\Bq}\|_{L^{\infty}(I_1;L_h^2(\Omega_1))}\\
&+h^{\gamma}\|\delta_{\Bq}\|_{L^{\infty}(I_1;H_h^{\gamma}(\Omega_1))}+h^{-1}\|\delta_{u}\|_{L^{\infty}(I_1;L_h^2(\Omega_1))}
+\|\nabla\delta_{u}\|_{L^{\infty}(I_1;L_h^2(\Omega_1))}\\
&+\|e_{\Bq}(0)\|_{L_h^2(\Omega_1)}+\Big(\sum_{K\in\mathcal{T}_h(\Omega_2)}\|\sqrt{\tau}(\mathcal{L}e_u-e_{\widehat{u}})(0)\|^2_{L^2(\partial K)}\Big)^{1/2}.
\end{align*}
\end{lemma}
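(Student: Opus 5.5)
We combine Lemma \ref{local_energy_estimate2} with Lemma \ref{local_energy_estimate1}, applied repeatedly on a nested family of space-time cylinders. The powers $(h\rho^{-1})^{\beta_1}$ and $(h\rho^{-1})^{\beta_2}$ are produced by this iteration.

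\textbf{Nested cylinders.} Fix an integer $N$ with $N\ge\max(\beta_1,\beta_2)$. We insert intermediate domains
\[
\Omega_0=D_0\subset D_1\subset\cdots\subset D_{2N}=\Omega_1,
\qquad
I_0=J_0\subset J_1\subset\cdots\subset J_{2N}=I_1,
\]
with consecutive spatial separation $\rho/(2N)$ and temporal separation $(\rho/(2N))^2$. The lemmas are applied between consecutive levels, with the middle sets $\Omega_2$ and $I_2$ chosen between them. All constants then depend only on $N$, hence only on $\beta_1$ and $\beta_2$. We also assume $r$ is so large that $\rho/(2N)\ge rh$, so the lemmas remain applicable at every level.

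\textbf{Bounding $\partial_t(u-u_h)$.} First write
\[
\partial_t(u-u_h)=\partial_t e_u-\partial_t\delta_u .
\]
The $\partial_t\delta_u$ term is absorbed into $\mathcal F$, since $\|\partial_t\delta_u\|\le h^{-1}\rho\|\partial_t\delta_u\|$ when $h\le\rho$.

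Next apply Lemma \ref{local_energy_estimate2} between $J_0\times D_0$ and $J_1\times D_1$. This bounds $\|\partial_t e_u\|_{L^2(J_0;L_h^2(D_0))}$ by three kinds of terms:
\begin{itemize}
\item $C\rho^{-1}$ times the local energy quantity $\mathcal E_1$ (the $e_{\Bq}$ and $\sqrt\tau(\mathcal L e_u-e_{\widehat u})$ norms on level 1);
\item $Ch$ times the $\partial_t e_{\Bq}$ and $\sqrt\tau\,\partial_t(\cdots)$ terms on level 1;
\item $C\mathcal F$.
\end{itemize}

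\textbf{Iterating the energy quantity.} Lemma \ref{local_energy_estimate1}, applied from level $j$ to level $j+1$, gives
\[
\mathcal E_j\le Ch\rho^{-1}\,\mathcal E_{j+1}+C\|e_u(0)\|+C\rho^{-1}\|e_u\|_{L^2(I_1;L_h^2(\Omega_1))}+C\rho\,\mathcal F .
\]
The last term is admissible because the $\delta$-terms in Lemma \ref{local_energy_estimate1}, multiplied by $\rho^{-1}$, are exactly the corresponding entries of $\mathcal F$.

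Iterating this $\beta_2$ times (rounded up) gives
\[
\rho^{-1}\mathcal E_1\le C(h\rho^{-1})^{\beta_2}\rho^{-1}\mathcal E_{2N}
+C\rho^{-1}\|e_u(0)\|+C\rho^{-2}\|e_u\|_{L^2(I_1;L_h^2(\Omega_1))}+C\mathcal F .
\]

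\textbf{Converting back from error quantities to $u-u_h$.} We pass from $e$-quantities to the quantities in the statement using three identities:
\begin{align*}
\Bq-\Bq_h&=e_{\Bq}-\delta_{\Bq},\\
\mathcal L e_u-e_{\widehat u}&=-(\mathcal L u_h-\widehat u_h)+\mathcal L\Pi_Vu-\Pi_{\widehat V}u,\\
e_u&=(u-u_h)+\delta_u .
\end{align*}
The extra projection terms are controlled by the trace estimates of Lemma \ref{tra} and go into $\mathcal F$. The term $\mathcal E_{2N}$ becomes $\mathcal K$ up to such projection terms, and $\|e_u\|$ becomes $\|u-u_h\|$ up to $\|\delta_u\|$. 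This yields the left-hand terms $\rho^{-1}\mathcal K_0$ and, via Lemma \ref{H1}, $\rho^{-1}\|\nabla(u-u_h)\|$; their own bound is the same chain started at level $0$.

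\textbf{Main obstacle: the $Ch\,\mathcal J$ term.} The term $Ch$ times the level-1 time-derivative quantities is the critical one. Merely enlarging the domain does not produce the factor $(h\rho^{-1})^{\beta_1}$.

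The plan is to differentiate the error equations (\ref{error_eqs}) in time. Then $(\partial_t e_{\Bq},\partial_t e_u,\partial_t e_{\widehat u})$ satisfies the same system with data $\partial_t\delta$. The proof of Lemma \ref{local_energy_estimate1}, repeated for this system, gives a recursion for the time-derivative energy $\mathcal J_j$:
\[
\mathcal J_j\le Ch\rho^{-1}\mathcal J_{j+1}+C\|\partial_t e_u(0)\|+C\rho^{-1}\|\partial_t e_u\|_{L^2(J_{j+1};L_h^2(D_{j+1}))}+\text{($\partial_t\delta$-terms)} .
\]
Multiplied by $h$, the terms on the right are handled as follows:
\begin{itemize}
\item $h\|\partial_t e_u(0)\|$ is exactly the initial term in the statement.
\item The $\partial_t\delta$-terms are entries of $\mathcal F$.
\item The lower-order term $h\rho^{-1}\|\partial_t e_u\|$ on the larger cylinder is bounded again by Lemma \ref{local_energy_estimate2}.
\end{itemize}

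This produces a coupled recursion between $\|\partial_t e_u\|$ on level $j$ and $\mathcal J$, $\mathcal E$ on level $j+1$, in which each step gains a factor $h\rho^{-1}$. Unrolling it $\beta_1$ times gives $Ch(h\rho^{-1})^{\beta_1}\mathcal J$ on $I_1\times\Omega_1$, plus terms already accounted for.

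The delicate point is checking that the cross terms generated at each step, of the form $\rho^{-1}\mathcal E$, do not lose powers of $h\rho^{-1}$. I expect this bookkeeping to be the hardest part. If it fails, I would choose unequal separations (geometric shrinking of the layers) so that the constants remain summable.
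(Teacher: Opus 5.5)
Your proposal follows essentially the same route as the paper: alternate Lemma~\ref{local_energy_estimate2} with Lemma~\ref{local_energy_estimate1} applied to the time-differentiated error system, gaining one factor $h\rho^{-1}$ on $h\mathcal J$ per two layers; then iterate Lemma~\ref{local_energy_estimate1} $\lceil\beta_2\rceil$ times on the energy term, and finish with the triangle inequality to pass from $e$-quantities to $u-u_h$. The bookkeeping you worry about is harmless, and only your layer count needs adjusting. The cross terms $(h\rho^{-1})^{k}\rho^{-1}\mathcal E$ need no gain: by $h\rho^{-1}<1$ and nestedness they are all bounded by $\rho^{-1}\mathcal E$ on the outermost layer reached by the $\partial_t$-chain, and the $\beta_2$-chain is then run once from there. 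This needs about $2\lceil\beta_1\rceil+\lceil\beta_2\rceil+1$ layers, so $2N$ layers with $N\ge\max(\beta_1,\beta_2)$ is too few in general. The paper takes $3\lceil\beta_1\rceil+1+\lceil\beta_2\rceil$ layers, all nested inside $\Omega_2$ so that the $\tau$-sums stay within $\mathcal T_h(\Omega_2)$.
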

\begin{proof}
Let $\beta_1$ and $\beta_2$ be two positive real numbers, and $\lceil \beta_1\rceil$ and $\lceil\beta_2\rceil$ denote the smallest integers greater than or equal to $\beta_1$ and $\beta_2$ respectively. Then, we divide both the space domain between $\Omega_0$ and $\Omega_2$ and the time domain between $I_0$ and $I_1$ into $n+m$ ($n=3\lceil\beta_1\rceil+1$ and $m=\lceil\beta_2\rceil$) equivalent sub-domains
such that $\Omega_0\subset\Omega_{3}\subset\cdots\subset\Omega_{n+1}\subset\cdots\subset\Omega_{n+m+1}\subset\Omega_2$ and $I_0\subset I_{3}\subset\cdots\subset I_{n+1}\subset\cdots\subset I_{n+m+1}\subset I_1$ with ${\rm dist}(\partial\Omega_0, \partial\Omega_3\backslash\partial\Omega)={\rm dist}(\partial\Omega_{i}, \partial\Omega_{i+1}\backslash\partial\Omega)={\rm dist}(\partial\Omega_{n+m+1},\partial\Omega_2\backslash\partial\Omega)=\rho/(n+m)$ and ${\rm dist}(\partial I_{0}, \partial I_{3}\backslash\{0,T\})={\rm dist}(\partial I_{i}, \partial I_{i+1}\backslash\{0,T\})=
{\rm dist}(\partial I_{n+m+1}, \partial I_{1}\backslash\{0,T\})=2\rho^2/(n+m)$ for $i=3,\cdots,n+m$.

In the right-hand side of the inequality (\ref{local_energy_estimates2:1}), let
\begin{align*}
\mathcal{X}^2_i=&\|e_{\Bq}\|^2_{L^2(I_{i};L_h^2(\Omega_{i}))}+\int_{I_{i}}\sum_{K\in\mathcal{T}_h(\Omega_{i})}
\|\sqrt{\tau}(\mathcal{L}e_u-e_{\widehat{u}})\|^2_{L^2(\partial K)} dt,\\
\mathcal{J}^2_{i}=&\|\partial_t e_{\Bq}\|^2_{L^2(I_{i};L_h^2(\Omega_{i}))}+\int_{I_{i}}\sum_{K\in\mathcal{T}_h(\Omega_{i})}
\|\sqrt{\tau}\partial_t(\mathcal{L}e_u-e_{\widehat{u}})\|^2_{L^2(\partial K)} dt,
\end{align*}
then repeatedly applying the results obtained in Lemma \ref{local_energy_estimate1} and Lemma \ref{local_energy_estimate2} we have
\begin{align}
&\|\partial_t e_u\|_{L^2(I_0;L^2(\Omega_0))}\nonumber\\
\leq& Ch\mathcal{J}_{3}+C\rho^{-1}\mathcal{X}_3+C\mathcal{F}\quad\quad ({\rm Lemma}~\ref{local_energy_estimate2})\nonumber\\
\leq& Ch(h\rho^{-1})\mathcal{J}_{4}+Ch\rho^{-1}\|\partial_t e_u\|_{L^2(I_{4};L_h^2(\Omega_{4}))}\nonumber\\
&+Ch\|\partial_t e_u(0)\|_{L_h^2(\Omega_1)}+C\rho^{-1}\mathcal{X}_3+C\mathcal{F}\quad({\rm Lemma}~\ref{local_energy_estimate1})\nonumber\\
\leq& Ch(h\rho^{-1})\mathcal{J}_{5}+Ch\|\partial_t e_u(0)\|_{L_h^2(\Omega_1)}+C\rho^{-1}\mathcal{X}_5+C\mathcal{F}\quad\quad ({\rm Lemma}~\ref{local_energy_estimate2})
\label{local_energy_estimate3-proof:1}\\
&\quad\cdots\cdots\nonumber\\
\leq& Ch(h\rho^{-1})^{\lceil\beta_1\rceil}\mathcal{J}_{n}+Ch\|\partial_t e_u(0)\|_{L_h^2(\Omega_1)}+C\rho^{-1}\mathcal{X}_{n}+C\mathcal{F}\nonumber\\
\leq& Ch(h\rho^{-1})^{\beta_1}\mathcal{J}_{n}+Ch\|\partial_t e_u(0)\|_{L_h^2(\Omega_1)}+C\rho^{-1}\mathcal{X}_{n}+C\mathcal{F},\nonumber
\end{align}
where $C$ is a positive constant depending on $n+m$. Note that in the above inequalities, we have used the fact that $h\rho^{-1}<1$.

Similarly, by repeatedly using Lemma \ref{local_energy_estimate1} we also have
\begin{align}
&\rho^{-1}\Big(\|\nabla e_u\|_{L^2(I_{n};L_h^2(\Omega_{n}))}+\mathcal{X}_{n}\Big)\nonumber\\
\leq& C(h\rho^{-1})\rho^{-1}\mathcal{X}_{n+1}+C\rho^{-1}\|e_u(0)\|_{L_h^2(\Omega_1)}+C\rho^{-2}\|e_u\|_{L^2(I_1;L_h^2(\Omega_1))}+C\mathcal{F}\nonumber\\
\leq& C(h\rho^{-1})^2\rho^{-1}\mathcal{X}_{n+2}+C\rho^{-1}\|e_u(0)\|_{L_h^2(\Omega_1)}+C\rho^{-2}\|e_u\|_{L^2(I_1;L_h^2(\Omega_1))}+C\mathcal{F}
\label{local_energy_estimate3-proof:2}\\
&\cdots\cdots\nonumber\\
\leq& C(h\rho^{-1})^{m}\rho^{-1}\mathcal{X}_{n+m}+C\rho^{-1}\|e_u(0)\|_{L_h^2(\Omega_1)}+C\rho^{-2}\|e_u\|_{L^2(I_1;L_h^2(\Omega_1))}+C\mathcal{F}\nonumber\\
\leq& C(h\rho^{-1})^{\beta_2}\rho^{-1}\mathcal{X}_{n+m}+C\rho^{-1}\|e_u(0)\|_{L_h^2(\Omega_1)}+C\rho^{-2}\|e_u\|_{L^2(I_1;L_h^2(\Omega_1))}+C\mathcal{F},\nonumber
\end{align}
which, together with (\ref{local_energy_estimate3-proof:1}) and the triangle inequality, leads to
\begin{align*}
&\|\partial_t(u-u_h)\|_{L^2(I_0;L_h^2(\Omega_0))}+\rho^{-1}\Big(\|\nabla(u-u_h)\|_{L^2(I_0;L_h^2(\Omega_0))}+\mathcal{K}_0\Big)\\
\leq &C\|\partial_t e_u\|_{L^2(I_0;L_h^2(\Omega_0))}+C\rho^{-1}\Big(\|\nabla e_u\|_{L^2(I_0;L_h^2(\Omega_0))}+\mathcal{X}_0\Big)
+C\|\partial_t\delta_u\|_{L^2(I_0;L_h^2(\Omega_0))}\\
&+C\rho^{-1}\big(\|\delta_{\Bq}\|_{L^2(I_0;L_h^2(\Omega_0))}
+\|\nabla\delta_u\|_{L^2(I_0;L_h^2(\Omega_{2}))}+h^{-1}\|\delta_u\|_{L^2(I_0;L_h^2(\Omega_{2}))}\big)\\
\leq&C\rho^{-1}\Big(\|\nabla e_u\|_{L^2(I_{n};L_h^2(\Omega_{n}))}+\mathcal{X}_{n}\Big)+Ch(h\rho^{-1})^{\beta_1}\mathcal{J}_{n}+Ch\|\partial_t e_u(0)\|_{L_h^2(\Omega_1)}
+C\mathcal{F}\\
\leq& C(h\rho^{-1})^{\beta_2}\rho^{-1}\mathcal{X}_{n+m}+Ch(h\rho^{-1})^{\beta_1}\mathcal{J}_{n}+C\Big(h\|\partial_t e_u(0)\|_{L_h^2(\Omega_1)}+\rho^{-1}\|e_u(0)\|_{L_h^2(\Omega_1)}\Big)\\
&+C\rho^{-2}\|u-u_h\|_{L^2(I_1;L_h^2(\Omega_1))}+C\mathcal{F}.
\end{align*}
We conclude the proof.
\end{proof}

\begin{remark}
In Lemma \ref{local_energy_estimate2} and Lemma \ref{local_energy_estimate3}, we required the regularity assumptions $u\in L^{\infty}((0,T);H^{1+\gamma}(\Omega))$ and $\partial_t u\in L^2((0,T);H^{1+\gamma}(\Omega))$. These assumptions are not satisfied by the standard parabolic equations in nonconvex domains. Nevertheless, in the subsequent proofs, these lemmas are only applied to the smooth regularized Green's function, which does satisfy these assumptions (see the text below (\ref{rGreenHDG})).
\end{remark}

\section{Maximum norm stability}\label{sec4}
In this section, we are devoted to proving Theorem~\ref{max-stability} - the maximum-norm stability, with the local energy error estimate established in Lemma \ref{local_energy_estimate3}.

To this end, for any $z\in \overline{K}_z$ with $K_z\in\mathcal{T}_h$, let $\delta_z$ denote the Dirac Delta function concentrated at $z$ such that
$\int_{\Omega}\delta_z(x)v(x) dx=v(z)$ for any $v(x)\in \mathcal{C}(\overline{\Omega})$, and $\widetilde{\delta}_z$ a smooth function supported
in $K_z$ such that (see \cite[Appendix A]{sw1995} and \cite{stw1998})
\begin{align}
\chi(z)=\int_{K_z}\widetilde{\delta}_z\chi dx\quad \forall \chi\in V(K_z),\quad |\widetilde{\delta}_z|_{W^{l,p}(\Omega)}\leq Ch^{-l-d(1-\frac{1}{p})}\quad \forall 1\leq p\leq \infty,~l=0,1,2,3,4.\label{rddf}
\end{align}

With $\delta_z$ and $\widetilde{\delta}_z$ at hand, we define the Green's function:
\begin{subequations}\label{Green}
\begin{align}
\boldsymbol{\mathcal{G}}(z,x,t)+\nabla G(z,x,t)&=0\quad \text{in}~\Omega\times (0,T],\\
\partial_t G(z,x,t)+\nabla\cdot \boldsymbol{\mathcal{G}}(z,x,t)&=0\quad \text{in}~\Omega\times(0,T],\\
G(z,x,t)&=0\quad \text{on}~\partial\Omega\times[0,T],\\
G(z,x,0)&=\delta_z\quad \text{in}~\Omega,
\end{align}
\end{subequations}
the regularized Green's function:
\begin{subequations}\label{rGreen}
\begin{align}
\boldsymbol{\mathcal{R}}(z,x,t)+\nabla\Gamma(z,x,t)&=0\quad \text{in}~\Omega\times (0,T],\label{rGreen:1}\\
\partial_t\Gamma(z,x,t)+\nabla\cdot\boldsymbol{\mathcal{R}}(z,x,t)&=0\quad \text{in}~\Omega\times(0,T],\label{rGreen:2}\\
\Gamma(z,x,t)&=0\quad \text{on}~\partial\Omega\times[0,T],\\
\Gamma(z,x,0)&=\widetilde{\delta}_z\quad \text{in}~\Omega,
\end{align}
\end{subequations}
and the semi-discrete scheme of (\ref{rGreen}): find $(\boldsymbol{\mathcal{R}}_h,\Gamma_h,\widehat{\Gamma}_h)\in \BW_h\times V_h\times\widehat{V}_h$ such that
\begin{subequations}\label{rGreenHDG}
\begin{align}
(\boldsymbol{\mathcal{R}}_h,\Bw_h)_{\mathcal{T}_h}-(\Gamma_h,\nabla\cdot\Bw_h)_{\mathcal{T}_h}+\langle\widehat{\Gamma}_h,\Bw_h\cdot\Bn\rangle
_{\partial\mathcal{T}_h}=0,&\label{rGreenHDG:1}\\
(\partial_t\Gamma_h,v_h)_{\mathcal{T}_h}-(\boldsymbol{\mathcal{R}}_h,\nabla v_h)_{\mathcal{T}_h}+\langle\widehat{\boldsymbol{\mathcal{R}}}_h\cdot
\Bn,v_h\rangle_{\partial\mathcal{T}_h}=0,&\label{rGreenHDG:2}\\
\langle\widehat{\boldsymbol{\mathcal{R}}}_h\cdot\Bn,\widehat{v}_h\rangle_{\partial\mathcal{T}_h}=0,&\label{rGreenHDG:3}\\
\widehat{\boldsymbol{\mathcal{R}}}_h\cdot\Bn=\boldsymbol{\mathcal{R}}_h\cdot\Bn+\tau(\mathcal{L}\Gamma_h-\widehat{\Gamma}_h),\quad \text{on}~\partial\mathcal{T}_h,&
\end{align}
\end{subequations}
for any $(\Bw_h,v_h,\widehat{v}_h)\in \BW_h\times V_h\times\widehat{V}_h$ and $\Gamma_h(z,x,0)=\Pi_V\tilde{\delta}_z$.

The Green's function $G(z,x,t)$ is symmetric with respect to $z$ and $x$ and has the following Gaussian pointwise estimate for the time derivatives
(cf. \cite[Appendix B with $\alpha=\beta=0$]{ge2006})
\begin{align}
|\partial_t^s G(z,x,t)|\leq Ct^{-s-\frac{d}{2}}e^{-\frac{|x-z|^2}{Ct}},\quad \forall z,x\in\Omega,~\forall t>0,~s=0,1,2,\cdots.\label{GE}
\end{align}
Since the regularized Green's function can be represented by (cf. \cite[(3.18)]{l2019})
\begin{align*}
\Gamma(z,x,t)=\int_{\Omega}G(x,y,t)\widetilde{\delta}_z(y) dy=\int_{\Omega}G(y,x,t)\widetilde{\delta}_z(y) dy,
\end{align*}
we know that the regularized Green's function $\Gamma$ admits time derivatives of any order. By finding time derivatives for (\ref{rGreen}), we have
\begin{align*}
\partial_{tt}\Gamma-\Delta\partial_t\Gamma=0\quad {\rm in}~\Omega\times (0,T],\quad \partial_t\Gamma=0\quad {\rm on}~\partial\Omega\times [0,T],\\
\partial_{ttt}\Gamma-\Delta\partial_{tt}\Gamma=0\quad {\rm in}~\Omega\times (0,T],\quad \partial_{tt}\Gamma=0\quad {\rm on}~\partial\Omega\times [0,T].
\end{align*}
For any $t\in (0,T]$, integrating the above two equations over the time interval $(0,t)$ yields
\begin{align*}
0=\int_0^t\partial_t(\partial_t\Gamma-\Delta\Gamma) dt=-\partial_t\Gamma(0)+\Delta\Gamma(0),\quad 0=\int_0^t\partial_t(\partial_{tt}\Gamma-\Delta\partial_t\Gamma) dt=-\partial_{tt}\Gamma(0)+\Delta\partial\Gamma(0).
\end{align*}
Then applying Theorem \ref{regularity} to the above two systems, we obtain $\partial_t\Gamma, \partial_{tt}\Gamma\in L^2((0,T);H^{1+\gamma}(\Omega))$.

Without lose generality, we assume throughout this section that $\text{diam}(\Omega)\leq 1$ and $T=1$, since the results for the case of $T>1$ can be obtained by parabolic re-scaling.
For any integer $j\geq 1$ and $z\in \overline{\Omega}$, we define $d_j=2^{-j}$ and
\begin{align*}
\Omega_j(z)=&\{x\in\Omega: d_j\leq |x-z|\leq d_{j-1}\},\\
Q_j(z)=&\{(t,x)\in Q_T: d_j\leq \max(|x-z|,\sqrt{t})\leq d_{j-1}\},\\
\Omega_{\ast}(z)=&\{x\in\Omega: |x-z|\leq Mh\},\\
Q_{\ast}(z)=&\{(t,x)\in Q_T: \max(|x-z|,\sqrt{t})\leq Mh\},
\end{align*}
where $Q_T=(0,T)\times \Omega$, $M>1$ is a constant, and $d_I\leq Mh\leq d_{I-1}$, thus $I\approx|\ln h|$.
In addition, we define
\begin{align*}
\Omega_j^1(z)=&\Omega_{j-1}(z)\cup\Omega_j(z)\cup\Omega_{j+1}(z),\\
\Omega_j^2(z)=&\Omega_{j-2}(z)\cup\Omega_j^1(z)\cup\Omega_{j+2}(z),\\
\Omega_j^3(z)=&\Omega_{j-3}(z)\cup\Omega_j^2(z)\cup\Omega_{j+3}(z),\\
Q_j^1(z)=&Q_{j-1}(z)\cup Q_j(z)\cup Q_{j+1}(z),\\
Q_{j}^2(z)=&Q_{j-2}(z)\cup Q_{j}^1(z)\cup Q_{j+2}(z),\\
Q_j^3(z)=&Q_{j-3}(z)\cup Q_{j}^2(z)\cup Q_{j+3}(z).
\end{align*}
Then according to the definition, it is easy to obtain that
\begin{align}
Q_T=\cup_{j=1}^I Q_j(z)\cup Q_{\ast}(z),\quad \Omega=\cup_{j=1}^I\Omega_j(z)\cup\Omega_{\ast}(z).\label{de}
\end{align}

Next we are going to prove some basic energy estimates for problems (\ref{rGreen}) and (\ref{rGreenHDG}).
To this end, we prove the following result.

\begin{lemma}\label{ee1}
Let $(\Bpsi_{h}, \phi_{h}, \widehat{\phi}_{h}) \in \BW_{h} \times V_{h} \times \widehat{V}_{h}$ satisfying
\begin{subequations}\label{ee1-eq}
\begin{align}
(\Bpsi_{h}, \Bw_h)_{\mathcal{T}_{h}} - (\phi_{h}, \nabla\cdot \Bw_h)_{\mathcal{T}_{h}}
+ \langle \widehat{\phi}_{h}, \Bw_h\cdot\Bn \rangle_{\partial\mathcal{T}_{h}} = 0,& \label{ee1-eq:1}\\
\langle \Bpsi_{h}\cdot\Bn + \tau(\mathcal{L}\phi_{h} - \widehat{\phi}_{h}), \widehat{v}_h
\rangle_{\partial\mathcal{T}_{h}} = 0,&\label{ee1-eq:2}
\end{align}
\end{subequations}
for any $(\Bw_h, \widehat{v}_h)\in \BW_{h} \times \widehat{V}_{h}$. Then we have
\begin{align}
\|\Bpsi_h\|_{L^2(\Omega)}+\Big(\sum_{K\in\mathcal{T}_h}\|\sqrt{\tau}(\mathcal{L}\phi_h-\widehat{\phi}_h)\|^2_{L^2(\partial K)}\Big)^{1/2}\leq Ch^{-1}\|\phi_h\|_{L^2(\Omega)}.
\end{align}
\end{lemma}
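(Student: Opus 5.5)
The plan is an energy identity obtained by testing the two equations against the unknowns themselves, followed by an inverse-type estimate for the volume term.

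\textbf{Step 1 (energy identity).} Take $\Bw_h=\Bpsi_h$ in (\ref{ee1-eq:1}) and $\widehat{v}_h=\widehat{\phi}_h$ in (\ref{ee1-eq:2}). This gives
\begin{align*}
\|\Bpsi_h\|^2_{L^2(\Omega)}=(\phi_h,\nabla\cdot\Bpsi_h)_{\mathcal{T}_h}-\langle\widehat{\phi}_h,\Bpsi_h\cdot\Bn\rangle_{\partial\mathcal{T}_h}.
\end{align*}
Adding the zero term $-\langle\Bpsi_h\cdot\Bn+\tau(\mathcal{L}\phi_h-\widehat{\phi}_h),\widehat{\phi}_h\rangle_{\partial\mathcal{T}_h}=0$ would double the $\widehat{\phi}_h$ contribution, so instead I subtract (\ref{ee1-eq:2}) with $\widehat{v}_h=\widehat{\phi}_h$, which removes the $\langle\widehat{\phi}_h,\Bpsi_h\cdot\Bn\rangle$ term. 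Integrating $(\phi_h,\nabla\cdot\Bpsi_h)_K$ by parts then yields
\begin{align*}
\|\Bpsi_h\|^2_{L^2(\Omega)}+\langle\tau(\mathcal{L}\phi_h-\widehat{\phi}_h),\widehat{\phi}_h\rangle_{\partial\mathcal{T}_h}
=-(\nabla\phi_h,\Bpsi_h)_{\mathcal{T}_h}+\langle\phi_h,\Bpsi_h\cdot\Bn\rangle_{\partial\mathcal{T}_h}.
\end{align*}
Here $\widehat{\phi}_h$ is the trace-space variable and $\tau$ is piecewise constant. Since $\Bpsi_h\cdot\Bn$ and $\tau\widehat{\phi}_h$ lie in $\mathcal{P}^k(F)$, we may replace $\phi_h$ by $\mathcal{L}\phi_h$ wherever $\mathcal{L}=\Pi_{\widehat V}$ (for HDG-1 and the mixed method $\mathcal{L}=\mathcal{I}$ and nothing changes). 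Writing $\widehat{\phi}_h=\mathcal{L}\phi_h-(\mathcal{L}\phi_h-\widehat{\phi}_h)$ in the $\tau$ term, I expect to reach
\begin{align*}
\|\Bpsi_h\|^2+\sum_{K}\|\sqrt{\tau}(\mathcal{L}\phi_h-\widehat{\phi}_h)\|^2_{L^2(\partial K)}
=-(\nabla\phi_h,\Bpsi_h)_{\mathcal{T}_h}+\langle \phi_h,\Bpsi_h\cdot\Bn+\tau(\mathcal{L}\phi_h-\widehat{\phi}_h)\rangle_{\partial\mathcal{T}_h}.
\end{align*}
This uses $\langle\tau(\mathcal{L}\phi_h-\widehat\phi_h),\mathcal{L}\phi_h\rangle=\langle\tau(\mathcal{L}\phi_h-\widehat\phi_h),\phi_h\rangle$, which holds because $\tau$ is constant per face.

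\textbf{Step 2 (bounding the right-hand side).} Both terms are bounded by inverse and trace inequalities applied to the discrete functions. By Lemma \ref{inv}, $\|\nabla\phi_h\|_{L^2(K)}\le Ch^{-1}\|\phi_h\|_{L^2(K)}$. By Lemma \ref{tra} together with the inverse estimate, $\|\phi_h\|_{L^2(\partial K)}\le Ch^{-1/2}\|\phi_h\|_{L^2(K)}$ and $\|\Bpsi_h\|_{L^2(\partial K)}\le Ch^{-1/2}\|\Bpsi_h\|_{L^2(K)}$. Since $\tau\le Ch^{-1}$, we also have $\|\sqrt{\tau}\phi_h\|_{L^2(\partial K)}\le Ch^{-1}\|\phi_h\|_{L^2(K)}$. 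These give
\begin{align*}
\text{RHS}\le Ch^{-1}\|\phi_h\|_{L^2(\Omega)}\Big(\|\Bpsi_h\|_{L^2(\Omega)}+\Big(\sum_K\|\sqrt{\tau}(\mathcal{L}\phi_h-\widehat{\phi}_h)\|^2_{L^2(\partial K)}\Big)^{1/2}\Big).
\end{align*}
Dividing through, or equivalently applying Young's inequality and kicking back, yields the claim.

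\textbf{Main obstacle.} The delicate point is the sign and cancellation bookkeeping in Step 1, especially for HDG-2 with $\mathcal{L}=\Pi_{\widehat V}$ and for HDG-1 and the mixed method where $\tau$ vanishes on some faces. If the projection swap turns out not to be exact, the fallback is to test (\ref{ee1-eq:1}) directly with $\Bw_h=\Bpsi_h$. One then controls $(\phi_h,\nabla\cdot\Bpsi_h)$ and $\langle\widehat{\phi}_h,\Bpsi_h\cdot\Bn\rangle$ by inverse estimates, using $\|\widehat{\phi}_h\|_{L^2(\partial K)}\le\|\mathcal{L}\phi_h\|_{L^2(\partial K)}+\|\mathcal{L}\phi_h-\widehat{\phi}_h\|_{L^2(\partial K)}$. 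The stabilization term is then recovered from (\ref{ee1-eq:2}) tested with $\widehat{\phi}_h$.
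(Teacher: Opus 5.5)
Your argument is the paper's proof: test (\ref{ee1-eq:1}) with $\Bw_h=\Bpsi_h$, eliminate $\langle\widehat{\phi}_h,\Bpsi_h\cdot\Bn\rangle_{\partial\mathcal{T}_h}$ using (\ref{ee1-eq:2}) with $\widehat{v}_h=\widehat{\phi}_h$, and integrate by parts to obtain $\|\Bpsi_h\|^2_{L^2(\Omega)}+\sum_{K}\|\sqrt{\tau}(\mathcal{L}\phi_h-\widehat{\phi}_h)\|^2_{L^2(\partial K)}=-(\nabla\phi_h,\Bpsi_h)_{\mathcal{T}_h}+\langle\Bpsi_h\cdot\Bn+\tau(\mathcal{L}\phi_h-\widehat{\phi}_h),\phi_h\rangle_{\partial\mathcal{T}_h}$; you then finish with inverse and trace estimates plus Young's inequality, so the fallback is not needed. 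The one slip is a sign in your intermediate display, which should read $\|\Bpsi_h\|^2_{L^2(\Omega)}-\langle\tau(\mathcal{L}\phi_h-\widehat{\phi}_h),\widehat{\phi}_h\rangle_{\partial\mathcal{T}_h}=-(\nabla\phi_h,\Bpsi_h)_{\mathcal{T}_h}+\langle\phi_h,\Bpsi_h\cdot\Bn\rangle_{\partial\mathcal{T}_h}$ (with your plus sign, substituting $\widehat{\phi}_h=\mathcal{L}\phi_h-(\mathcal{L}\phi_h-\widehat{\phi}_h)$ would leave $-\sum_K\|\sqrt{\tau}(\mathcal{L}\phi_h-\widehat{\phi}_h)\|^2_{L^2(\partial K)}$ on the left), but the final identity you state is the correct one.
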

\begin{proof}
Setting $\Bw_h=\Bpsi_h$ in (\ref{ee1-eq:1}) to infer by (\ref{ee1-eq:2}), Lemma \ref{inv} and Lemma \ref{tra} that
\begin{align*}
\|\Bpsi_h\|^2_{L^2(\Omega)}+\sum_{K\in\mathcal{T}_h}&\|\sqrt{\tau}(\mathcal{L}\phi_h-\widehat{\phi}_h)\|^2_{L^2(\partial K)}=-(\nabla\phi_h,\Bpsi_h)_{\mathcal{T}_h}+
\langle\Bpsi_h\cdot\Bn+\tau(\mathcal{L}\phi_h-\widehat{\phi}_h),\phi_h\rangle_{\partial\mathcal{T}_h}\\
\leq&Ch^{-1}\|\phi_h\|_{L^2(\Omega)}\|\Bpsi_h\|_{L^2(\Omega)}+Ch^{-1}\|\phi_h\|_{L^2(\Omega)}\Big(\sum_{K\in\mathcal{T}_h}\|\sqrt{\tau}(\mathcal{L}\phi_h-\widehat{\phi}_h)\|^2_{L^2(\partial K)}\Big)^{1/2}.
\end{align*}
Hence we can derive the desired result by using Young's inequality to the above estimate.
\end{proof}

\begin{lemma}[energy estimates]\label{ee}
Let $(\boldsymbol{\mathcal{R}},\Gamma)$ and $(\boldsymbol{\mathcal{R}}_h,\Gamma_h,\widehat{\Gamma}_h)$ be the solutions of problems (\ref{rGreen}) and (\ref{rGreenHDG}). We have
\begin{align}
\|\boldsymbol{\mathcal{R}}\|_{L^2(Q_T)}+\|\nabla\Gamma\|_{L^2(Q_T)}+\|\boldsymbol{\mathcal{R}}_h\|_{L^2(Q_T)}+\|\nabla\Gamma_h\|_{L^2((0,T);L_h^2(\Omega))}&\nonumber\\
+\Big(\int_0^T\sum_{K\in\mathcal{T}_h}\|\sqrt{\tau}(\mathcal{L}\Gamma_h-\widehat{\Gamma}_h)\|^2_{L^2(\partial K)} dt\Big)^{1/2}\leq Ch^{-\frac{d}{2}},&\label{ee:1}\\
\|\partial_t\Gamma\|_{L^2(Q_T)}+\|\partial_t\Gamma_h\|_{L^2(Q_T)}\leq Ch^{-1-\frac{d}{2}},&\label{ee:2}\\
\|\partial_t\boldsymbol{\mathcal{R}}\|_{L^2(Q_T)}+\|\nabla\partial_t\Gamma\|_{L^2(Q_T)}+\|\partial_t\boldsymbol{\mathcal{R}}_h\|_{L^2(Q_T)}
+\|\nabla\partial_t\Gamma_h\|_{L^2((0,T);L_h^2(\Omega))}&\nonumber\\
+\Big(\int_0^T\sum_{K\in\mathcal{T}_h}\|\sqrt{\tau}\partial_t(\mathcal{L}\Gamma_h-\widehat{\Gamma}_h)\|^2_{L^2(\partial K)} dt\Big)^{1/2}\leq Ch^{-2-\frac{d}{2}},&\label{ee:3}\\
\|\partial_{tt}\Gamma\|_{L^2(Q_T)}+\|\partial_{tt}\Gamma_h\|_{L^2(Q_T)}\leq
Ch^{-3-\frac{d}{2}},&\label{ee:4}\\
\|\partial_{tt}\boldsymbol{\mathcal{R}}\|_{L^2(Q_T)}+\|\nabla\partial_{tt}\Gamma\|_{L^2(Q_T)}+\|\partial_{tt}\boldsymbol{\mathcal{R}}_h\|_{L^2(Q_T)}+
\|\nabla\partial_{tt}\Gamma_h\|_{L^2((0,T);L_h^2(\Omega))}&\label{ee:5}\\
+\Big(\int_0^T\sum_{K\in\mathcal{T}_h}\|\sqrt{\tau}\partial_{tt}(\mathcal{L}\Gamma_h-\widehat{\Gamma}_h)\|^2_{L^2(\partial K)}\Big)^{1/2}\leq Ch^{-4-\frac{d}{2}}.&\nonumber
\end{align}
\end{lemma}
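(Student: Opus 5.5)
The plan is to prove each estimate by testing the continuous problem (\ref{rGreen}) and the semi-discrete problem (\ref{rGreenHDG}), together with their time-differentiated versions, against natural test functions. We then bound the resulting initial data using (\ref{rddf}) and inverse estimates. Throughout, we use $\|\widetilde{\delta}_z\|_{L^2(\Omega)}\leq Ch^{-d/2}$, $\|\nabla\widetilde{\delta}_z\|_{L^2}\leq Ch^{-1-d/2}$, $\|\Delta\widetilde{\delta}_z\|_{L^2}\leq Ch^{-2-d/2}$, $\|\nabla\Delta\widetilde{\delta}_z\|_{L^2}\leq Ch^{-3-d/2}$ and $\|\Delta^2\widetilde{\delta}_z\|_{L^2}\leq Ch^{-4-d/2}$. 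These hold because $\widetilde{\delta}_z$ is smooth and compactly supported in $K_z$, so $\widetilde{\delta}_z\in H^4_0(\Omega)$. We also use $\|\Pi_V\widetilde{\delta}_z\|_{L^2}\leq Ch^{-d/2}$.

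\emph{Continuous part.} Testing $\partial_t\Gamma-\Delta\Gamma=0$ with $\Gamma$ gives $\tfrac12\|\Gamma(T)\|^2+\|\nabla\Gamma\|^2_{L^2(Q_T)}\leq \tfrac12\|\widetilde\delta_z\|^2$, which yields the continuous part of (\ref{ee:1}) since $\boldsymbol{\mathcal R}=-\nabla\Gamma$. Testing with $\partial_t\Gamma$ gives $\|\partial_t\Gamma\|^2_{L^2(Q_T)}+\tfrac12\|\nabla\Gamma(T)\|^2\leq\tfrac12\|\nabla\widetilde\delta_z\|^2$, which is (\ref{ee:2}). Since $\partial_t\Gamma$ solves the same heat equation with initial datum $\partial_t\Gamma(0)=\Delta\widetilde\delta_z$ (as derived above), repeating these two energy identities gives (\ref{ee:3}) and (\ref{ee:4}). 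Applying them once more to $\partial_{tt}\Gamma$, with $\partial_{tt}\Gamma(0)=\Delta^2\widetilde\delta_z$, gives (\ref{ee:5}). Here $\partial_t\Gamma(0)=\Delta\widetilde\delta_z\in H^1_0$ because $\widetilde\delta_z$ vanishes near $\partial K_z$.

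\emph{Discrete part.} We take $\Bw_h=\boldsymbol{\mathcal R}_h$, $v_h=\Gamma_h$ and $\widehat v_h=\widehat\Gamma_h$ in (\ref{rGreenHDG}) and add the equations. This gives the identity $\tfrac12\tfrac{d}{dt}\|\Gamma_h\|^2+\|\boldsymbol{\mathcal R}_h\|^2+\sum_K\|\sqrt\tau(\mathcal L\Gamma_h-\widehat\Gamma_h)\|^2_{L^2(\partial K)}=0$. The HDG-2 case, where $\mathcal L=\Pi_{\widehat V}$, is handled as in Lemma \ref{local_energy_estimate1}. Integrating in time and using Lemma \ref{H1} for $\nabla\Gamma_h$ gives (\ref{ee:1}). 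Next we differentiate (\ref{rGreenHDG:1}) and (\ref{rGreenHDG:3}) in time and take $\Bw_h=\boldsymbol{\mathcal R}_h$ there, with $v_h=\partial_t\Gamma_h$ and $\widehat v_h=\partial_t\widehat\Gamma_h$ in (\ref{rGreenHDG:2}). As in the proof of Lemma \ref{local_energy_estimate2}, this gives
\begin{align*}
\|\partial_t\Gamma_h\|^2+\tfrac12\tfrac{d}{dt}\Big(\|\boldsymbol{\mathcal R}_h\|^2+\sum_K\|\sqrt\tau(\mathcal L\Gamma_h-\widehat\Gamma_h)\|^2_{L^2(\partial K)}\Big)=0.
\end{align*}
Integrating, the bound reduces to the quantity at $t=0$. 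By Lemma \ref{ee1} applied to $(\boldsymbol{\mathcal R}_h(0),\Gamma_h(0),\widehat\Gamma_h(0))$, this quantity is at most $Ch^{-1}\|\Pi_V\widetilde\delta_z\|\leq Ch^{-1-d/2}$, which proves (\ref{ee:2}). Since the scheme is linear and time-invariant, $(\partial_t\boldsymbol{\mathcal R}_h,\partial_t\Gamma_h,\partial_t\widehat\Gamma_h)$ solves the same scheme, and so does the second time derivative. The energy identity applied to the first derivative bounds (\ref{ee:3}) by $\|\partial_t\Gamma_h(0)\|$. The second identity applied to the first derivative bounds $\|\partial_{tt}\Gamma_h\|_{L^2(Q_T)}$ via Lemma \ref{ee1} at $t=0$ by $Ch^{-1}\|\partial_t\Gamma_h(0)\|$. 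Similarly, (\ref{ee:5}) is bounded by $\|\partial_{tt}\Gamma_h(0)\|$.

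\emph{Main obstacle.} The key step is bounding the discrete initial time derivatives: $\|\partial_t\Gamma_h(0)\|\leq Ch^{-2-d/2}$ and $\|\partial_{tt}\Gamma_h(0)\|\leq Ch^{-4-d/2}$. We bound them with inverse estimates, since the scheme has no explicit representation of $\boldsymbol{\mathcal R}_h(0)$ in terms of $\Gamma_h(0)$. Take $v_h=\partial_t\Gamma_h(0)$ in (\ref{rGreenHDG:2}) at $t=0$ and rewrite it with (\ref{rGreenHDG:3}) as $(\boldsymbol{\mathcal R}_h,\nabla v_h)-\langle\widehat{\boldsymbol{\mathcal R}}_h\cdot\Bn,v_h-\widehat v_h\rangle$ with $\widehat v_h=0$. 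Inverse and trace estimates then give $\|\partial_t\Gamma_h(0)\|\leq Ch^{-1}\big(\|\boldsymbol{\mathcal R}_h(0)\|+(\sum_K\|\sqrt\tau(\mathcal L\Gamma_h-\widehat\Gamma_h)(0)\|^2)^{1/2}\big)$, which is at most $Ch^{-2}\|\Gamma_h(0)\|\leq Ch^{-2-d/2}$ by Lemma \ref{ee1}. Iterating this once more, with Lemma \ref{ee1} applied to the differentiated triple at $t=0$ (which satisfies (\ref{ee1-eq})), gives $\|\partial_{tt}\Gamma_h(0)\|\leq Ch^{-2}\|\partial_t\Gamma_h(0)\|\leq Ch^{-4-d/2}$. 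The only delicate point is checking that the differentiated equations still satisfy the hypotheses of Lemma \ref{ee1} at $t=0$. This holds because (\ref{rGreenHDG:1}) and (\ref{rGreenHDG:3}) are time-independent constraints.
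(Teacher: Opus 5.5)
Your proposal is correct and follows essentially the same route as the paper. You use energy identities for the continuous and semi-discrete problems and for their time derivatives (the paper combines the continuous and discrete identities into one, which is only cosmetic). The discrete initial data are controlled the same way: you bound $\|\partial_t\Gamma_h(0)\|\leq Ch^{-2}\|\Gamma_h(0)\|$ and $\|\partial_{tt}\Gamma_h(0)\|\leq Ch^{-2}\|\partial_t\Gamma_h(0)\|$ by inverse and trace estimates combined with Lemma~\ref{ee1}, which is exactly the paper's argument in (\ref{ee-proof:4}).
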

\begin{proof}
Multiplying $\Gamma$ from both sides of (\ref{rGreen:2}) and setting $(\Bw_h,v_h,\widehat{v}_h)=(\boldsymbol{\mathcal{R}}_h,\Gamma_h,\widehat{\Gamma}_h)$ in (\ref{rGreenHDG}), we derive that
\begin{align}
\|\nabla \Gamma\|^2_{L^2(\Omega)}+(\partial_t\Gamma,\Gamma)+\|\boldsymbol{\mathcal{R}}_h\|^2_{L^2(\Omega)}+\sum_{K\in\mathcal{T}_h}\|\sqrt{\tau}(\mathcal{L}\Gamma_h-\widehat{\Gamma}_h)\|^2
_{L^2(\partial K)}+(\partial_t\Gamma_h,\Gamma_h)_{\mathcal{T}_h}=0.\label{ee-proof:ad3}
\end{align}
On the other hand, from Lemma \ref{H1} it is easy to observe that
\begin{align}
\|\nabla\Gamma_h\|_{L^2(K)}^2+h_K^{-1}\|\Gamma_h-\widehat{\Gamma}_h\|^2_{L^2(\partial K)}\leq C\big(
\|\boldsymbol{\mathcal{R}}_h\|^2_{L^2(K)}+\|\sqrt{\tau}(\mathcal{L}\Gamma_h-\widehat{\Gamma}_h)\|^2_{L^2(\partial K)}\big),\label{ee-proof:ad4}
\end{align}
for each $K\in\mathcal{T}_h$.
Thus integrating (\ref{ee-proof:ad3}) and (\ref{ee-proof:ad4}) over $[0,T]$ we obtain by (\ref{rddf}),
\begin{align}
&\|\boldsymbol{\mathcal{R}}\|_{L^2(Q_T)}+\|\nabla\Gamma\|_{L^2(Q_T)}+\|\boldsymbol{\mathcal{R}}_h\|_{L^2(Q_T)}+
\|\nabla\Gamma_h\|_{L^2((0,T);L_h^2(\Omega))}\nonumber\\
&+\Big(\int_0^T\sum_{K\in\mathcal{T}_h}\|\sqrt{\tau}(\mathcal{L}\Gamma_h-\widehat{\Gamma}_h)\|^2_{L^2(\partial K)} dt\Big)^{1/2}\leq C\big(\|\Gamma(0)\|_{L^2(\Omega)}+\|\Gamma_h(0)\|_{L^2(\Omega)}\big)\leq Ch^{-\frac{d}{2}}.\label{ee-proof:ad1}
\end{align}

Calculating the derivative with respect to the time from both sides of (\ref{rGreenHDG:1}) to infer that
\begin{align*}
(\partial_t\boldsymbol{\mathcal{R}}_h,\Bw_h)_{\mathcal{T}_h}+(\nabla\partial_t\Gamma_h,\Bw_h)_{\mathcal{T}_h}-\langle\partial_t(\Gamma_h-\widehat{\Gamma}_h),
\Bw_h\cdot\Bn\rangle_{\partial\mathcal{T}_h}=0,\quad \forall \Bw_h\in\BW_h.
\end{align*}
Then multiplying $\partial_t\Gamma$ from both sides of (\ref{rGreen:2}) and setting $(v_h,\widehat{v}_h)=(\partial_t\Gamma_h,\partial_t\widehat{\Gamma}_h)$
in (\ref{rGreenHDG:2}) and $\Bw_h=\boldsymbol{\mathcal{R}}_h$ in the above equation, we have
\begin{align*}
\|\partial_t\Gamma\|^2_{L^2(\Omega)}+(\nabla\Gamma,\partial_t\nabla\Gamma)+\|\partial_t\Gamma_h\|^2_{L^2(\Omega)}+
(\partial_t\boldsymbol{\mathcal{R}}_h,\boldsymbol{\mathcal{R}}_h)_{\mathcal{T}_h}+\langle\tau(\mathcal{L}\Gamma_h-\widehat{\Gamma}_h),
\partial_t(\mathcal{L}\Gamma_h-\widehat{\Gamma}_h)\rangle_{\partial\mathcal{T}_h}=0.
\end{align*}
Thus we can integrate the above result over $[0,T]$ to obtain by Lemma \ref{ee1} and (\ref{rddf}) that
\begin{align}
&\|\partial_t\Gamma\|_{L^2(Q_T)}+\|\partial_t\Gamma_h\|_{L^2(Q_T)}\nonumber\\
\leq& C\Big(\|\nabla\Gamma(0)\|_{L^2(\Omega)}+\|\boldsymbol{\mathcal{R}}_h(0)\|_{L^2(\Omega)}+\Big(\sum_{K\in\mathcal{T}_h}
\|\sqrt{\tau}(\mathcal{L}\Gamma_h-\widehat{\Gamma}_h)(0)\|^2_{L^2(\partial K)}\Big)^{1/2}\Big)\label{ee-proof:1}\\
\leq & C\big(\|\nabla\Gamma(0)\|_{L^2(\Omega)}+h^{-1}\|\Gamma_h(0)\|_{L^2(\Omega)}\big)\leq Ch^{-1-\frac{d}{2}}.\nonumber
\end{align}

By taking the derivative for (\ref{rGreen}) and (\ref{rGreenHDG}) with respect to the time, we can prove (\ref{ee:3}) with a similar method that has been
used to prove (\ref{ee:1}). According to (\ref{ee-proof:ad1}) we have
\begin{align}
&\|\partial_t\boldsymbol{\mathcal{R}}\|_{L^2(Q_T)}+\|\nabla\partial_t\Gamma\|_{L^2(Q_T)}+\|\partial_t\boldsymbol{\mathcal{R}}_h\|_{L^2(Q_T)}+
\|\nabla\partial_t\Gamma_h\|_{L^2((0,T);L_h^2(\Omega))}\nonumber\\
&+\Big(\int_0^T\sum_{K\in\mathcal{T}_h}\|\sqrt{\tau}\partial_t(\mathcal{L}\Gamma_h-\widehat{\Gamma}_h)\|^2_{L^2(\partial K)} dt\Big)^{1/2}\label{ee-proof:3}\\
\leq& C\big(\|\partial_t\Gamma(0)\|_{L^2(\Omega)}+\|\partial_t\Gamma_h(0)\|_{L^2(\Omega)}\big)=C\big(\|\Delta\Gamma(0)\|_{L^2(\Omega)}+
\|\partial_t\Gamma_h(0)\|_{L^2(\Omega)}\big).\nonumber
\end{align}
For any $v\in L^2(\Omega)$, by the definition of $\Pi_V$, (\ref{rGreenHDG:2}) and Lemma \ref{ee1} we have
\begin{align}
(\partial_t\Gamma_h(0),v)=&(\partial_t\Gamma_h(0),\Pi_Vv)_{\mathcal{T}_h}=(\boldsymbol{\mathcal{R}}_h(0),\nabla\Pi_Vv)_{\mathcal{T}_h}-\langle\boldsymbol{\mathcal{R}}_h(0)\cdot
\Bn+\tau(\mathcal{L}\Gamma_h-\widehat{\Gamma}_h)(0),\Pi_Vv\rangle_{\partial\mathcal{T}_h}\nonumber\\
\leq &Ch^{-1}\Big(\|\boldsymbol{\mathcal{R}}_h(0)\|_{L^2(\Omega)}+\Big(\sum_{K\in\mathcal{T}_h}\|\sqrt{\tau}(\mathcal{L}\Gamma_h-\widehat{\Gamma}_h)(0)\|^2_{L^2(\partial K)}\Big)^{1/2}\Big)\|\Pi_Vv\|_{L^2(\Omega)}\label{ee-proof:4}\\
\leq & Ch^{-2}\|\Gamma_h(0)\|_{L^2(\Omega)}\|v\|_{L^2(\Omega)},\nonumber
\end{align}
which, together with (\ref{ee-proof:3}) and (\ref{rddf}), results in
\begin{align*}
&\|\partial_t\boldsymbol{\mathcal{R}}\|_{L^2(Q_T)}+\|\nabla\partial_t\Gamma\|_{L^2(Q_T)}+\|\partial_t\boldsymbol{\mathcal{R}}_h\|_{L^2(Q_T)}+
\|\nabla\partial_t\Gamma_h\|_{L^2((0,T);L_h^2(\Omega))}\nonumber\\
&+\Big(\int_0^T\sum_{K\in\mathcal{T}_h}\|\sqrt{\tau}\partial_t(\mathcal{L}\Gamma_h-\widehat{\Gamma}_h)\|^2_{L^2(\partial K)} dt\Big)^{1/2}\\
\leq& C\big(\|\Delta\Gamma(0)\|_{L^2(\Omega)}+h^{-2}\|\Gamma_h(0)\|_{L^2(\Omega)}\big)\leq Ch^{-2-\frac{d}{2}}.
\end{align*}

Next, we turn to (\ref{ee:4}) and it can be proved by a similar method that has been used for (\ref{ee:2}). According to (\ref{ee-proof:1}), it is easy to obtain by (\ref{rddf}), (\ref{ee-proof:4}) and Lemma \ref{ee1} that
\begin{align*}
&\|\partial_{tt}\Gamma\|_{L^2(Q_T)}+\|\partial_{tt}\Gamma_h\|_{L^2(Q_T)}\\
\leq& C\Big(\|\nabla\partial_t\Gamma(0)\|_{L^2(\Omega)}
+\|\partial_t\boldsymbol{\mathcal{R}}_h(0)\|_{L^2(\Omega)}+\Big(\sum_{K\in\mathcal{T}_h}\|\sqrt{\tau}\partial_t(\mathcal{L}\Gamma_h-\widehat{\Gamma}_h)(0)\|^2_{L^2(\partial K)}\Big)^{1/2}\Big)\\
\leq& C\big(\|\nabla\partial_t\Gamma(0)\|_{L^2(\Omega)}+h^{-1}\|\partial_t\Gamma_h(0)\|_{L^2(\Omega)}\big)\\
\leq& C\big(\|\nabla\Delta\Gamma(0)\|_{L^2(\Omega)}+h^{-3}
\|\Gamma_h(0)\|_{L^2(\Omega)}\big)
\leq Ch^{-3-\frac{d}{2}}.
\end{align*}

Finally, we prove (\ref{ee:5}) by a similar method that has been used to prove (\ref{ee:3}). According to (\ref{ee-proof:3}), (\ref{ee-proof:4}), and (\ref{rddf}) we have
\begin{align}
&\|\partial_{tt}\boldsymbol{\mathcal{R}}\|_{L^2(Q_T)}+\|\nabla\partial_{tt}\Gamma\|_{L^2(Q_T)}+\|\partial_{tt}\boldsymbol{\mathcal{R}}_h\|_{L^2(Q_T)}+
\|\nabla\partial_{tt}\Gamma_h\|_{L^2((0,T);L_h^2(\Omega))}\nonumber\\
&+\Big(\int_0^T\sum_{K\in\mathcal{T}_h}\|\sqrt{\tau}\partial_{tt}(\mathcal{L}\Gamma_h-\widehat{\Gamma}_h)\|^2_{L^2(\partial K)}\Big)^{1/2}\nonumber\\
&\leq C\big(\|\partial_{tt}\Gamma(0)\|_{L^2(\Omega)}+\|\partial_{tt}\Gamma_h(0)\|_{L^2(\Omega)}\big)\nonumber\\
&\leq C\big(\|\Delta^2\Gamma(0)\|_{L^2(\Omega)}+h^{-2}\|\partial_t\Gamma_h(0)\|_{L^2(\Omega)}\big)\nonumber\\
&\leq C\big(\|\Delta^2\Gamma(0)\|_{L^2(\Omega)}+h^{-4}\|\Gamma_h(0)\|_{L^2(\Omega)}
\big)\leq Ch^{-4-\frac{d}{2}}.\nonumber
\end{align}
\end{proof}

In addition to the energy estimates presented in Lemma \ref{ee}, we also need to prove some local properties for $\Gamma_h$ and $\boldsymbol{\mathcal{R}}_h$
at $t=0$.

\begin{lemma}\label{ee3}
Let $(\boldsymbol{\mathcal{R}}_h,\Gamma_h,\widehat{\Gamma}_h)\in \BW_h\times V_h\times\widehat{V}_h$ be the solution of problem (\ref{rGreenHDG}), then for
$j=1,2,\cdots,I$ we have
\begin{align}
&h\|\partial_t\Gamma_h(0)\|_{L_h^2(\Omega_j(z))}+\|\boldsymbol{\mathcal{R}}_h(0)\|_{L_h^2(\Omega_j(z))}
+\Big(\sum_{K\in\mathcal{T}_h(\Omega_j(z))}\|\sqrt{\tau}(\mathcal{L}\Gamma_h-\widehat{\Gamma}_h)(0)\|^2_{L^2(\partial K)}\Big)^{1/2}\nonumber\\
&\leq C(hd_j^{-1})^{\beta_3}\Big(\|\boldsymbol{\mathcal{R}}_h(0)\|_{L^2_h(\mathcal{T}_h(\Omega_j^1(z)))}
+\Big(\sum_{K\in\mathcal{T}_h(\Omega_j^{1}(z))}\|\sqrt{\tau}(\mathcal{L}\Gamma_h-\widehat{\Gamma}_h)(0)\|^2_{L^2(\partial K)}\Big)^{1/2}\Big),
\end{align}
for any real number $\beta_3>0$, where $C$ is a positive constant depending on $\beta_3$.
\end{lemma}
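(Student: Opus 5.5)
The plan is to exploit the fact that at $t=0$ the discrete Green's function is \emph{locally} the solution of a homogeneous discrete elliptic problem away from $z$. Since $\Gamma_h(0)=\Pi_V\widetilde{\delta}_z$ and $\widetilde{\delta}_z$ is supported in $K_z$, while $V_h$ is discontinuous, we have $\Gamma_h(0)=0$ on every $K\neq K_z$.

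For $j\le I$, every point of $\Omega_j^1(z)$ has distance at least $d_{j+2}\ge Mh/8$ from $z$. Choosing $M$ large, $\Gamma_h(0)$ therefore vanishes on all elements of $\mathcal{T}_h(\Omega_j^1(z))$. (If the index $j+1$ or $j+2$ exceeds $I$, I would replace $\Omega_j^1(z)$ by its part at distance $\ge Mh/8$ from $z$.) Moreover, $(\boldsymbol{\mathcal{R}}_h(0),\Gamma_h(0),\widehat{\Gamma}_h(0))$ satisfies the system (\ref{ee1-eq}): (\ref{rGreenHDG:1}) holds at $t=0$, and (\ref{rGreenHDG:3}) holds at $t=0$ as well.

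\textbf{Step 1 (one-step Caccioppoli estimate with a gain $h\rho^{-1}$).} Take nested $D_0\subset D_1$ inside the region where $\Gamma_h(0)=0$, with $\mathrm{dist}=\rho\ge rh$. Let $\varphi_1$ be the cut-off for this pair, and let $\varphi_{1,h},\widehat{\varphi}_{1,h}$ be defined as in Section~\ref{sec3}. Test (\ref{ee1-eq:1}) with $\Bw_h=\varphi_{1,h}^2\boldsymbol{\mathcal{R}}_h(0)$ and (\ref{ee1-eq:2}) with $\widehat v_h=\widehat{\varphi}_{1,h}^2\widehat{\Gamma}_h(0)$, and subtract. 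The volume term with $\Gamma_h$ drops out, and $\mathcal{L}\Gamma_h=0$ there (also when $\mathcal{L}=\Pi_{\widehat V}$). This gives
\begin{align*}
\|\varphi_{1,h}\boldsymbol{\mathcal{R}}_h(0)\|^2_{L^2(\Omega)}+\sum_{K}\|\sqrt{\tau}\,\widehat{\varphi}_{1,h}\widehat{\Gamma}_h(0)\|^2_{L^2(\partial K)}
=\langle \boldsymbol{\mathcal{R}}_h(0)\cdot\Bn,(\widehat{\varphi}_{1,h}^2-\varphi_{1,h}^2)\widehat{\Gamma}_h(0)\rangle_{\partial\mathcal{T}_h}.
\end{align*}
By (\ref{supera}) the right-hand side carries a factor $Ch\rho^{-1}$. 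To bound it, I would use the inverse and trace estimates (Lemmas~\ref{inv} and~\ref{tra}) for $\boldsymbol{\mathcal{R}}_h$ on $\partial K$. For $\widehat{\Gamma}_h$ I would use Lemma~\ref{H1} with $e_u$ replaced by $\Gamma_h(0)=0$: it gives $h_K^{-1}\|\widehat{\Gamma}_h(0)\|^2_{L^2(\partial K)}\le C(\|\boldsymbol{\mathcal{R}}_h(0)\|^2_{L^2(K)}+\|\sqrt{\tau}(\mathcal{L}\Gamma_h-\widehat{\Gamma}_h)(0)\|^2_{L^2(\partial K)})$. The Lemma~\ref{H1} argument only uses (\ref{error_eq1}) elementwise, i.e.\ (\ref{ee1-eq:1}). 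Combining these yields
\begin{align*}
\mathcal{Y}(D_0)\le Ch\rho^{-1}\mathcal{Y}(D_1),
\end{align*}
where $\mathcal{Y}(D)^2=\|\boldsymbol{\mathcal{R}}_h(0)\|^2_{L_h^2(\mathcal{T}_h(D))}+\sum_{K\in\mathcal{T}_h(D)}\|\sqrt{\tau}(\mathcal{L}\Gamma_h-\widehat{\Gamma}_h)(0)\|^2_{L^2(\partial K)}$. No $\rho^{-1}\|\Gamma_h\|$ term appears, since $\Gamma_h(0)=0$ locally.

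\textbf{Step 2 (iteration).} Exactly as in the proof of Lemma~\ref{local_energy_estimate3}, insert $n=\lceil\beta_3\rceil$ nested domains between $\Omega_j(z)$ and $\Omega_j^1(z)$, each separated by $\rho\approx d_j/(8n)\ge rh$ (again valid for $M$ large). Applying Step~1 $n$ times gives $\mathcal{Y}(\Omega_j(z))\le C(hd_j^{-1})^{n}\mathcal{Y}(\Omega_j^1(z))\le C(hd_j^{-1})^{\beta_3}\mathcal{Y}(\Omega_j^1(z))$, using $hd_j^{-1}<1$.

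\textbf{Step 3 (time derivative).} For $K\in\mathcal{T}_h(\Omega_j(z))$, test (\ref{rGreenHDG:2}) at $t=0$ with $v_h=\partial_t\Gamma_h(0)\chi_K$, which is admissible because $V_h$ is discontinuous. The inverse and trace inequalities then give $h\|\partial_t\Gamma_h(0)\|_{L^2(K)}\le C(\|\boldsymbol{\mathcal{R}}_h(0)\|_{L^2(K)}+\|\sqrt{\tau}(\mathcal{L}\Gamma_h-\widehat{\Gamma}_h)(0)\|_{L^2(\partial K)})$, since $h\tau\le C$. Summing over $K$ bounds $h\|\partial_t\Gamma_h(0)\|_{L^2_h(\Omega_j(z))}$ by $C\mathcal{Y}(\Omega_j(z))$, and Step~2 finishes the proof.

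The main obstacle is Step~1. The issue is controlling $\widehat{\Gamma}_h(0)$ on faces with no volume norm of $\Gamma_h$ available, which is where Lemma~\ref{H1} together with $\Gamma_h(0)=0$ is essential. A second point to check is that the nested subdomains stay a distance of order $Mh$ from $K_z$ for all $j\le I$.
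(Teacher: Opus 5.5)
Your proposal is correct and follows essentially the same route as the paper: you derive a cut-off energy identity at $t=0$ by testing (\ref{rGreenHDG:1}) and (\ref{rGreenHDG:3}) with $\varphi_h^2\boldsymbol{\mathcal{R}}_h(0)$ and $\widehat{\varphi}_h^2\widehat{\Gamma}_h(0)$; since $\Gamma_h(0)=0$ away from $K_z$, only the $(\varphi_h^2-\widehat{\varphi}_h^2)$ commutator survives, it is controlled via Lemma~\ref{H1} and gives one factor $hd_j^{-1}$, and this is combined with an elementwise version of (\ref{ee-proof:4}) for $h\partial_t\Gamma_h(0)$ and iterated over nested annuli as in Lemma~\ref{local_energy_estimate3}. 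Two small points to tidy: because you put $\widehat{\varphi}_{1,h}$ rather than $\varphi_{1,h}$ on the stabilization term, take $\varphi_1\equiv1$ on an $O(h)$-neighbourhood of $D_0$ so the left side really dominates $\mathcal{Y}(D_0)$; and the full one-step factor $h\rho^{-1}$ (instead of $(h\rho^{-1})^{1/2}$) requires the Young absorption based on $\widehat{\varphi}_{1,h}\le\varphi_{1,h}$, although either version suffices for arbitrary $\beta_3$.
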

\begin{proof}
Let $\varphi\in\mathcal{C}_0^{\infty}(\Omega_j^{1}(z))$ with $\varphi\equiv1$ on $\Omega_j(z)$. We define $(\varphi_h|_K,\widehat{\varphi}_h|_F)\in\mathcal{P}^0(K)\times \mathcal{P}^0(F)$ as
$\varphi_h|_K=\sup_{x\in K}\varphi(x)$ and $\widehat{\varphi}_h|_F=\sup_{x\in F}\varphi(x)$ for $K\in\mathcal{T}_h$ and $F\in\mathcal{E}_h$. Then by setting $\Bw_h=\varphi_h^2\boldsymbol{\mathcal{R}}_h(0)$
in (\ref{rGreenHDG:1}) and using (\ref{ee-proof:ad4}) and the fact that $\Gamma_h(0)=\Pi_V\tilde{\delta}_z=0$ in $\Omega_j^{1}(z)$, we have
\begin{align}
&\|\varphi_h\boldsymbol{\mathcal{R}}_h(0)\|^2_{L^2(\Omega)}+\sum_{K\in\mathcal{T}_h}\|\sqrt{\tau}\varphi_h(\mathcal{L}\Gamma_h-\widehat{\Gamma}_h)(0)\|^2_{L^2(\partial K)}\nonumber\\
=&\langle\boldsymbol{\mathcal{R}}_h(0)\cdot\Bn+\tau(\mathcal{L}\Gamma_h-\widehat{\Gamma}_h)(0),\varphi_h^2(\Gamma_h-\widehat{\Gamma}_h)(0)\rangle_{\partial\mathcal{T}_h}\nonumber\\
=&\langle\boldsymbol{\mathcal{R}}_h(0)\cdot\Bn+\tau(\mathcal{L}\Gamma_h-\widehat{\Gamma}_h)(0),(\varphi_h^2-\widehat{\varphi}_h^2)(\Gamma_h-\widehat{\Gamma}_h)(0)\rangle_{\partial\mathcal{T}_h}\nonumber\\
\leq& Chd_j^{-1}\sum_{K\in\mathcal{T}_h}\Big(\|\sqrt{\tau}\varphi_h(\mathcal{L}\Gamma_h-\widehat{\Gamma}_h)(0)\|_{L^2(\partial K)}\|\sqrt{\tau}(\Gamma_h-\widehat{\Gamma}_h)(0)\|_{L^2(\partial K)}\nonumber\\
&+h_K^{-1/2}\|(\Gamma_h-\widehat{\Gamma}_h)(0)\|_{L^2(\partial K)}\|\varphi_h\boldsymbol{\mathcal{R}}_h(0)\|_{L^2(K)}\Big)\nonumber\\
\leq &\frac{1}{2}\Big(\|\varphi_h\boldsymbol{\mathcal{R}}_h(0)\|_{L^2(\Omega)}^2+\sum_{K\in\mathcal{T}_h}\|\sqrt{\tau}\varphi_h(\mathcal{L}\Gamma_h-
\widehat{\Gamma}_h)(0)\|_{L^2(\partial K)}^2\Big)\nonumber\\
&+C(hd_j^{-1})^2\Big(\|\boldsymbol{\mathcal{R}}_h(0)\|^2_{L^2_h(\mathcal{T}_h(\Omega_j^1(z)))}
+\sum_{K\in\mathcal{T}_h(\Omega_j^1(z))}\|\sqrt{\tau}(\mathcal{L}\Gamma_h-\widehat{\Gamma}_h)(0)\|^2
_{L^2(\partial K)}\Big),\nonumber
\end{align}
which, together with (\ref{ee-proof:4}), results in
\begin{align*}
&h\|\varphi_h\partial_t\Gamma_h(0)\|_{L^2(\Omega)}+\|\varphi_h\boldsymbol{\mathcal{R}}_h(0)\|_{L^2(\Omega)}
+\Big(\sum_{K\in\mathcal{T}_h}\|\sqrt{\tau}\varphi_h(\mathcal{L}\Gamma_h-\widehat{\Gamma}_h)(0)\|^2_{L^2(\partial K)}\Big)^{1/2}\\
&\leq Chd_j^{-1}\Big(\|\boldsymbol{\mathcal{R}}_h(0)\|_{L^2_h(\mathcal{T}_h(\Omega_j^1(z)))}
+\Big(\sum_{K\in\mathcal{T}_h(\Omega_j^{1}(z))}\|\sqrt{\tau}(\mathcal{L}\Gamma_h-\widehat{\Gamma}_h)(0)\|^2_{L^2(\partial K)}\Big)^{1/2}\Big).
\end{align*}
Thus, applying the proof method from Lemma \ref{local_energy_estimate3}, we obtain that for any real number $\beta_3>0$
\begin{align*}
&h\|\partial_t\Gamma_h(0)\|_{L_h^2(\Omega_j(z))}+\|\boldsymbol{\mathcal{R}}_h(0)\|_{L_h^2(\Omega_j(z))}
+\Big(\sum_{K\in\mathcal{T}_h(\Omega_j(z))}\|\sqrt{\tau}(\mathcal{L}\Gamma_h-\widehat{\Gamma}_h)(0)\|^2_{L^2(\partial K)}\Big)^{1/2}\\
&\leq C(hd_j^{-1})^{\beta_3}\Big(\|\boldsymbol{\mathcal{R}}_h(0)\|_{L^2_h(\mathcal{T}_h(\Omega_j^1(z)))}
+\Big(\sum_{K\in\mathcal{T}_h(\Omega_j^{1}(z))}\|\sqrt{\tau}(\mathcal{L}\Gamma_h-\widehat{\Gamma}_h)(0)\|^2_{L^2(\partial K)}\Big)^{1/2}\Big),
\end{align*}
where $C$ is a positive constant depending on $\beta_3$.
\end{proof}

Corresponding to the regularized Green's function, from Lemma 4.1 and its proof in \cite{l2019} we further have the following local energy estimates.

\begin{lemma}\label{rgge}
For the regularized Green's function $\Gamma$ defined in (\ref{rGreen}), we have
\begin{align}
d_j^{-4}\|\Gamma\|_{L^2H^{1+\gamma}(Q_j(z))}+d_j^{-2}\|\partial_t\Gamma\|_{L^2H^{1+\gamma}(Q_j(z))}+\|\partial_{tt}\Gamma\|_{L^2H^{1+\gamma}(Q_j(z))}
\leq& Cd_j^{-4-\gamma-\frac{d}{2}},\\
d_j^{-2}\|\Gamma\|_{L^{\infty}H^{1+\gamma}(Q_j(z))}+\|\partial_t\Gamma\|_{L^{\infty}H^{1+\gamma}(Q_j(z))}\leq& Cd_j^{-\gamma-3-\frac{d}{2}},
\end{align}
where
\begin{align*}
&\|v\|_{L^2(Q_j(z))}^2=\int_0^{d_j^2}\|v\|_{L_h^2(\Omega_j(z))}^2 dt+
\int_{d_j^2}^{d_{j-1}^2}\|v\|^2_{L_h^2(B_{d_{j-1}}(z))} dt,\\
&\|v\|^2_{L^2H^{1+\gamma}(Q_j(z))}=\int_0^{d_j^2}\|v\|_{H^{1+\gamma}_h(\Omega_j(z))}^2 dt+
\int_{d_j^2}^{d_{j-1}^2}\|v\|^2_{H^{1+\gamma}_h(B_{d_{j-1}}(z))} dt,\\
&\|v\|_{L^{\infty}H^{1+\gamma}(Q_j(z))}=\max_t\|v\|_{H^{1+\gamma}_h(Q_j^t(z))},\quad Q_j^t(z)=\{x\in\Omega, (x,t)\in Q_j(z)\},
\end{align*}
$\Omega_j(z)\times [0,d_j^2]$ and $B_{d_{j-1}}(z)\times [d_j^2, d_{j-1}^2]$ are two cylinders consisting of the set $Q_j(z)$, and $B_{d_{j-1}}(z)=\{x\in\Omega: |x-z|\leq d_{j-1}\}$.
\end{lemma}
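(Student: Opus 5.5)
The plan is to combine the Gaussian bound (\ref{GE}) with a scaled local $H^{1+\gamma}$ regularity estimate for the Laplacian on $\Omega$, following Lemma 4.1 of \cite{l2019}.

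\textbf{Step 1: pointwise bounds.} Since $\widetilde{\delta}_z$ is supported in $K_z$, we have $\|\widetilde{\delta}_z\|_{L^1}\le C$ by (\ref{rddf}). We use the representation $\partial_t^s\Gamma(z,x,t)=\int_{K_z}\partial_t^sG(y,x,t)\widetilde{\delta}_z(y)\,dy$. For $j\le I$ we have $d_j\ge Mh$. Hence for $(t,x)\in Q_j^1(z)$ and $y\in K_z$, either $|x-y|\ge c\,d_j$, or $t\ge c\,d_j^2$. In both cases (\ref{GE}) gives
\begin{align*}
|\partial_t^s\Gamma(z,x,t)|\le C\sup_{t>0}t^{-s-\frac d2}e^{-\frac{cd_j^2}{t}}+Cd_j^{-2s-d}\le Cd_j^{-2s-d},\qquad s=0,1,2,3.
\end{align*}
The region $Q_j^1(z)$ has space-time measure $\lesssim d_j^{d+2}$, and each time slice has spatial measure $\lesssim d_j^d$. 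Therefore
\begin{align*}
\|\partial_t^s\Gamma\|_{L^2(Q_j^1(z))}\le Cd_j^{1-2s-\frac d2},\qquad \max_t\|\partial_t^s\Gamma(t)\|_{L^2(Q_j^{1,t}(z))}\le Cd_j^{-2s-\frac d2}.
\end{align*}

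\textbf{Step 2: local elliptic regularity.} For fixed $t$, the function $v=\partial_t^s\Gamma$ satisfies $-\Delta v=-\partial_t^{s+1}\Gamma$ in $\Omega$ with $v=0$ on $\partial\Omega$. This uses the identities obtained above by differentiating (\ref{rGreen}) in time. Take a cutoff $\omega$ equal to $1$ on the spatial slice of $Q_j(z)$, supported in the slice of $Q_j^1(z)$, with $|\nabla^l\omega|\le Cd_j^{-l}$. The global $H^{1+\gamma}$ regularity on $\Omega$ (the elliptic counterpart of Theorem \ref{regularity}(ii)) is applied to $\omega v$. The commutator terms $\nabla\omega\cdot\nabla v$ and $v\Delta\omega$ are treated with a Caccioppoli inequality, which bounds $\|\nabla v\|$ by $d_j^{-1}\|v\|+d_j\|\Delta v\|$ on a slightly larger set. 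To make the constant independent of $j$, we dilate by $d_j^{-1}$. A polygonal/polyhedral domain is locally a finite union of cones near each corner or edge, so the dilated geometry stays in a compact family, and the fractional seminorm scales correctly. The resulting estimate is
\begin{align*}
|\omega v|_{H^{1+\gamma}}\le C\big(d_j^{1-\gamma}\|\Delta v\|_{L^2(\mathrm{supp}\,\omega)}+d_j^{-1-\gamma}\|v\|_{L^2(\mathrm{supp}\,\omega)}\big).
\end{align*}
The lower-order parts of the full norm carry nonnegative powers of $d_j\le1$ relative to this, so they are harmless.

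\textbf{Step 3: combine.} Insert Step 1 with $\Delta v=\partial_t^{s+1}\Gamma$. Integrating in $t$ gives
\begin{align*}
\|\partial_t^s\Gamma\|_{L^2H^{1+\gamma}(Q_j(z))}\le C\big(d_j^{1-\gamma}d_j^{-1-2s-\frac d2}+d_j^{-1-\gamma}d_j^{1-2s-\frac d2}\big)=Cd_j^{-2s-\gamma-\frac d2}.
\end{align*}
Multiplying by $d_j^{-4},d_j^{-2},1$ for $s=0,1,2$ yields the first inequality. Taking the maximum in $t$ instead gives
\begin{align*}
\max_t\|\partial_t^s\Gamma\|_{H^{1+\gamma}}\le Cd_j^{-2s-1-\gamma-\frac d2}.
\end{align*}
Multiplying by $d_j^{-2}$ for $s=0$, and by $1$ for $s=1$, yields the second inequality.

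\textbf{Main obstacle.} The delicate step is Step 2: obtaining the localized, scale-invariant $H^{1+\gamma}$ estimate near reentrant corners and edges with a $j$-independent constant. A second point is checking that the bottom cylinder $\Omega_j(z)\times[0,d_j^2]$, which reaches $t=0$, causes no trouble. It does not, because there $|x-z|\ge d_j\gg h$, so $\widetilde{\delta}_z=0$ nearby and the Gaussian factor controls all time derivatives uniformly down to $t=0$.
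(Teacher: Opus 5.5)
Your proposal is correct and follows essentially the route the paper takes. The paper gives no independent argument; it defers to Lemma 4.1 of \cite{l2019} and its proof, which combines the Gaussian bounds (\ref{GE}) for $\partial_t^sG$ with a scale-invariant local $H^{1+\gamma}$ elliptic estimate at scale $d_j$, and you correctly push this to $s=2,3$ to cover $\partial_{tt}\Gamma$. One small slip: for $j=I$ you only have $d_I\geq Mh/2$ rather than $d_I\geq Mh$, but this does not affect the separation $|x-y|\geq c\,d_j$ once $M$ is large.
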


With all ingredients at hand, now we are going to prove the maximum-norm stability. Before this, we present an expression of $u_h(z,t)$ for any $z\in K_z$ with $K_z\in\mathcal{T}_h$ and $t>0$ in the following lemma.

\begin{lemma}\label{prep}
Let $(\Bq_h,u_h,\widehat{u}_h)$ and $(\boldsymbol{\mathcal{R}}_h,\Gamma_h,\widehat{\Gamma}_h)$ be the solutions of problems (\ref{HDG}) and (\ref{rGreenHDG}),
then we have for any $z\in K_z$ with $K_z\in\mathcal{T}_h$ and $t>0$
\begin{align}
u_h(z,t)=\int_0^t\int_{\Omega}f(x,s)\Gamma_h(z,x,t-s) dxds+\int_{\Omega}u_{0,h}\Gamma_h(z,x,t) dx,
\end{align}
where $u_{0,h}(x)=u_h(x,0)$ is an approximation of $u_0$ in $V_h$.
\end{lemma}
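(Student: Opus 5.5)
Since $\widetilde{\delta}_z$ reproduces point values on $V(K_z)$ by (\ref{rddf}), we have $u_h(z,t)=(u_h(t),\widetilde{\delta}_z)=(u_h(t),\Pi_V\widetilde{\delta}_z)=(u_h(t),\Gamma_h(0))$. The plan is then a discrete Duhamel/duality argument that moves $\Gamma_h$ forward in time while moving $u_h$ backward. Fix $t>0$ and set
\[
\Phi(s)=(u_h(s),\Gamma_h(t-s))_{\mathcal{T}_h},\qquad s\in[0,t].
\]
Then $\Phi(t)=u_h(z,t)$ and $\Phi(0)=\int_\Omega u_{0,h}\Gamma_h(z,x,t)\,dx$. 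It therefore suffices to prove
\[
\Phi'(s)=(f(s),\Gamma_h(t-s))_{\mathcal{T}_h},
\]
and integrate over $(0,t)$.

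Differentiating gives $\Phi'(s)=(\partial_t u_h(s),\Gamma_h(t-s))-(u_h(s),\partial_t\Gamma_h(t-s))$. I would write $(\boldsymbol{\mathcal{R}}_h,\Gamma_h,\widehat{\Gamma}_h)$ for the values at time $t-s$, with all other quantities at time $s$.

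For the first term, take $v_h=\Gamma_h$ in (\ref{HDG:2}) and subtract (\ref{HDG:3}) with $\widehat v_h=\widehat\Gamma_h$. This gives
\[
(\partial_t u_h,\Gamma_h)=(f,\Gamma_h)+(\Bq_h,\nabla\Gamma_h)-\langle \Bq_h\cdot\Bn+\tau(\mathcal{L}u_h-\widehat u_h),\Gamma_h-\widehat\Gamma_h\rangle .
\]
The same manipulation applied to (\ref{rGreenHDG:2})--(\ref{rGreenHDG:3}) with $v_h=u_h$ gives
\[
(\partial_t\Gamma_h,u_h)=(\boldsymbol{\mathcal{R}}_h,\nabla u_h)-\langle \boldsymbol{\mathcal{R}}_h\cdot\Bn+\tau(\mathcal{L}\Gamma_h-\widehat\Gamma_h),u_h-\widehat u_h\rangle .
\]
Next, integrate by parts elementwise in (\ref{HDG:1}) with $\Bw_h=\boldsymbol{\mathcal{R}}_h$:
\[
(\Bq_h,\boldsymbol{\mathcal{R}}_h)=-(\nabla u_h,\boldsymbol{\mathcal{R}}_h)+\langle u_h-\widehat u_h,\boldsymbol{\mathcal{R}}_h\cdot\Bn\rangle .
\]
Symmetrically, (\ref{rGreenHDG:1}) with $\Bw_h=\Bq_h$ gives
\[
(\boldsymbol{\mathcal{R}}_h,\Bq_h)=-(\nabla\Gamma_h,\Bq_h)+\langle \Gamma_h-\widehat\Gamma_h,\Bq_h\cdot\Bn\rangle .
\]
Equating these two expressions for $(\Bq_h,\boldsymbol{\mathcal{R}}_h)$ shows that the sum of the volume term $(\Bq_h,\nabla\Gamma_h)$ and the $\Bq_h\cdot\Bn$ face term in the first identity equals the corresponding pair built from $\boldsymbol{\mathcal{R}}_h$ in the second. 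Hence those terms cancel in $\Phi'$.

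What remains of $\Phi'-(f,\Gamma_h)$ is the difference of the stabilization terms,
\[
-\langle\tau(\mathcal{L}u_h-\widehat u_h),\Gamma_h-\widehat\Gamma_h\rangle+\langle\tau(\mathcal{L}\Gamma_h-\widehat\Gamma_h),u_h-\widehat u_h\rangle .
\]
Both pairings vanish in the same way. For the hybrid mixed method $\tau=0$. For HDG-1 we have $\mathcal{L}=\mathcal{I}$, so the two terms are identical and cancel. For HDG-2, $\tau$ is constant on each face and $\mathcal{L}=\Pi_{\widehat V}$ maps into $\mathcal{P}^k(F)$, which contains $\widehat u_h,\widehat\Gamma_h$. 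Therefore $\langle\tau(\Pi_{\widehat V}u_h-\widehat u_h),\Gamma_h-\widehat\Gamma_h\rangle=\langle\tau(\Pi_{\widehat V}u_h-\widehat u_h),\Pi_{\widehat V}\Gamma_h-\widehat\Gamma_h\rangle$, which is symmetric in the two solutions.

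This yields $\Phi'(s)=(f(s),\Gamma_h(z,\cdot,t-s))$. Integrating over $s\in(0,t)$ gives the stated formula. The only delicate point is checking the symmetry of the stabilization term for HDG-2 via the projection property. The remaining steps are routine bookkeeping, together with noting that the semi-discrete systems are linear ODEs with smooth-in-time solutions, so differentiation of $\Phi$ is justified.
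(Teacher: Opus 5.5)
Your proposal is correct and follows essentially the same route as the paper: you write $u_h(z,t)=(u_h(t),\Gamma_h(0))$ using $\Gamma_h(0)=\Pi_V\widetilde{\delta}_z$, then differentiate (equivalently, integrate the $s$-derivative of) $(u_h(s),\Gamma_h(t-s))$, and use (\ref{HDG:1})--(\ref{HDG:3}) and (\ref{rGreenHDG:1})--(\ref{rGreenHDG:3}) with elementwise integration by parts so that the $(\Bq_h,\boldsymbol{\mathcal{R}}_h)$ terms cancel. You also check explicitly that the two stabilization pairings coincide: trivially for hybrid mixed DG and HDG-1, and for HDG-2 through the $\Pi_{\widehat{V}}$ projection property with $\tau$ constant on each face of $\partial K$. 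The paper uses this step only implicitly, so your check is a useful filled-in detail rather than a different argument.
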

\begin{proof}
According to (\ref{rddf}), we have
\begin{align}
u_h(z,t)=&\int_{\Omega}u_h(x,t)\tilde{\delta}_z dx=\int_{\Omega}u_h(x,t)\Pi_V\tilde{\delta}_z dx=\int_{\Omega}u_h(x,t)\Gamma_h(z,x,0) dx\nonumber\\
=&\int_{\Omega}\int_0^t\frac{\partial}{\partial s}\big(u_h(x,s)\Gamma_h(z,x,t-s)\big) ds dx+\int_{\Omega}u_{0,h}\Gamma_h(z,x,t) dx\label{prep-proof:1}\\
=&\int_0^t\int_{\Omega}\Gamma_h(z,x,t-s)\partial_su_h(x,s) dx ds+\int_0^t\int_{\Omega}u_h(x,s)\partial_s\Gamma_h(z,x,t-s) dxds\nonumber\\
&+\int_{\Omega}u_{0,h}\Gamma_h(z,x,t) dx.\nonumber
\end{align}
Next we are going to calculate the first and second terms on the right hand side of (\ref{prep-proof:1}) separately. By (\ref{HDG:2}), (\ref{HDG:3}),
(\ref{rGreenHDG:1}) and integration by parts, we have
\begin{align}
&\int_0^t\int_{\Omega}\Gamma_h(z,x,t-s)\partial_su_h(x,s) dx ds\nonumber\\
=&\int_0^t(f,\Gamma_h(t-s)) ds+\int_0^t-(\Gamma_h(t-s),\nabla\cdot\Bq_h)_{\mathcal{T}_h}-\langle\tau(\mathcal{L}u_h-\widehat{u}_h),\Gamma_h(t-s)\rangle_{\partial\mathcal{T}_h} ds\nonumber\\
=&\int_0^t(f,\Gamma_h(t-s)) ds+\int_0^t -(\boldsymbol{\mathcal{R}}_h(t-s),\Bq_h)_{\mathcal{T}_h}-\langle\widehat{\Gamma}_h(t-s),\Bq_h\cdot\Bn\rangle_{\partial\mathcal{T}_h}
-\langle\tau(\mathcal{L}u_h-\widehat{u}_h),\Gamma_h(t-s)\rangle_{\partial\mathcal{T}_h} ds\nonumber\\
=&\int_0^t(f,\Gamma_h(t-s)) ds-\int_0^t(\boldsymbol{\mathcal{R}}_h(t-s),\Bq_h)_{\mathcal{T}_h}+\langle\tau(\mathcal{L}u_h-\widehat{u}_h),(\Gamma_h-\widehat{\Gamma}_h)
(t-s)\rangle_{\partial\mathcal{T}_h} ds.\nonumber
\end{align}
Similarly, by (\ref{rGreenHDG:2}), (\ref{rGreenHDG:3}), (\ref{HDG:1}) and integration by parts, we have
\begin{align}
&\int_0^t\int_{\Omega}u_h(x,s)\partial_s\Gamma_h(z,x,t-s) dxds\nonumber\\
=&-\int_0^t\int_{\Omega}u_h(x,t-s)\partial_s\Gamma_h(z,x,s) dx ds\nonumber\\
=&\int_0^t(u_h(t-s),\nabla\cdot\boldsymbol{\mathcal{R}}_h)_{\mathcal{T}_h}+\langle\tau(\mathcal{L}\Gamma_h-\widehat{\Gamma}_h),u_h(t-s)\rangle_{\partial\mathcal{T}_h} ds\nonumber\\
=&\int_0^t (\Bq_h(t-s),\boldsymbol{\mathcal{R}}_h)_{\mathcal{T}_h}+\langle\widehat{u}_h(t-s),\boldsymbol{\mathcal{R}}_h\cdot\Bn\rangle_{\partial\mathcal{T}_h}
+\langle\tau(\mathcal{L}\Gamma_h-\widehat{\Gamma}_h),u_h(t-s)\rangle_{\partial\mathcal{T}_h} ds\nonumber\\
=&\int_0^t(\Bq_h(t-s),\boldsymbol{\mathcal{R}}_h)_{\mathcal{T}_h}+\langle\tau(\mathcal{L}\Gamma_h-\widehat{\Gamma}_h),(u_h-\widehat{u}_h)(t-s)\rangle_{\partial\mathcal{T}_h} ds\nonumber\\
=&\int_0^t(\Bq_h,\boldsymbol{\mathcal{R}}_h(t-s))_{\mathcal{T}_h}+\langle\tau(\mathcal{L}\Gamma_h-\widehat{\Gamma}_h)(t-s),(u_h-\widehat{u}_h)\rangle_{\partial\mathcal{T}_h} ds.\nonumber
\end{align}
Thus we derive the desired result by using the above three equalities.
\end{proof}

\begin{theorem}[maximum-norm stability]\label{max-stability}
Let $u$ and $(\Bq_h,u_h,\widehat{u}_h)$ be the solutions of problems (\ref{model}) and (\ref{HDG}), we have
\begin{align}
\|u_h\|_{L^{\infty}(\Omega)}\leq C\|f\|_{L^{\infty}((0,t);L^{\infty}(\Omega))}+C\|u_{0,h}\|_{L^{\infty}(\Omega)},\quad \forall t\in(0,T].\label{max-stability:1}
\end{align}
If we further have $f=0$, then the following maximum-norm stability of the discrete semigroup holds
\begin{align}
\|u_h\|_{L^{\infty}(\Omega)}+t\|\partial_t u_h\|_{L^{\infty}(\Omega)}\leq C\|u_{0,h}\|_{L^{\infty}(\Omega)},\quad \forall t\in (0,T].\label{max-stability:2}
\end{align}
Here $u_{0,h}\in V_h$ is an approximation of $u_0$, and the constants $C$ in
(\ref{max-stability:1}) and (\ref{max-stability:2}) are both independent of $T$.
\end{theorem}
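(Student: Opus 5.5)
By Lemma \ref{prep}, the whole proof reduces to an $L^1$ bound on the discrete regularized Green's function. Since $u_h(z,t)=\int_0^t\int_\Omega f(x,s)\Gamma_h(z,x,t-s)\,dx\,ds+\int_\Omega u_{0,h}\Gamma_h(z,x,t)\,dx$, it suffices to prove, uniformly in $z\in\Omega$ and with $T=1$, $\mathrm{diam}(\Omega)\le1$, the two bounds
\begin{align*}
\sup_{t\in(0,T]}\|\Gamma_h(z,\cdot,t)\|_{L^1(\Omega)}+\sup_{t\in(0,T]}t\|\partial_t\Gamma_h(z,\cdot,t)\|_{L^1(\Omega)}\le C,\qquad \|\partial_t\Gamma_h(z,\cdot,\cdot)\|_{L^1(Q_T)}\le C.
\end{align*}
The first gives (\ref{max-stability:2}), since $\partial_tu_h(z,t)=\int u_{0,h}\partial_t\Gamma_h(z,x,t)\,dx$. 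For (\ref{max-stability:1}) I would not bound $\int_0^t\|\Gamma_h(t-s)\|_{L^1}ds$ directly, because that grows like $t$ and the constant must be independent of $T$. Instead I would use the semigroup structure: write the Duhamel term via the discrete Laplacian, or split $[0,t]$ into unit intervals and use exponential $L^2$ decay of the discrete semigroup for $t\ge1$. The exponential decay comes from the discrete Poincaré inequality together with Lemma \ref{H1}, and an inverse estimate converts it back to $L^\infty$. On $t\le 1$ the bound $\|\Gamma_h\|_{L^1(Q_1)}\le C$ suffices.

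The core step is the bound on $\|\partial_t\Gamma_h\|_{L^1(Q_T)}$, since the other quantities follow from it. Set $F=\Gamma-\Gamma_h$ and use the dyadic decomposition (\ref{de}). On $Q_\ast(z)$, Cauchy--Schwarz and Lemma \ref{ee} give $\|\partial_tF\|_{L^1(Q_\ast)}\le C(Mh)^{1+d/2}h^{-1-d/2}\le C_M$. On $Q_j(z)$, $\|\partial_tF\|_{L^1(Q_j)}\le Cd_j^{1+d/2}\|\partial_tF\|_{L^2(Q_j)}$, and I would apply Lemma \ref{local_energy_estimate3} with $\rho\sim d_j$ between $Q_j$ and $Q_j^1$. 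The approximation terms $\mathcal F$ are controlled by Lemma \ref{rgge} and give $C(h/d_j)^{\gamma}d_j^{-1-d/2}$, which is summable. The initial terms $\|e_u(0)\|$, $h\|\partial_te_u(0)\|$, $\|e_{\Bq}(0)\|$ and $\sqrt\tau(\mathcal Le_u-e_{\widehat u})(0)$ on $\Omega_j^1$ are handled as follows: $\Gamma_h(0)=\Pi_V\tilde\delta_z$ vanishes there, and Lemma \ref{ee3} with large $\beta_3$, together with Lemma \ref{ee1} and (\ref{rddf}), gives a bound $(h/d_j)^{\beta_3}h^{-1-d/2}$. The remaining terms are $(h\rho^{-1})^{\beta_2}\rho^{-1}\mathcal K$ and $h(h\rho^{-1})^{\beta_1}\mathcal J$. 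These are globally bounded by Lemma \ref{ee} by $h^{-1-d/2}$ and $h^{-2-d/2}$ respectively, so choosing $\beta_1=\beta_2=4+d/2$ makes them $(h/d_j)^{\beta}d_j^{-1-d/2}\cdot$(summable), which absorbs the asymmetry terms. This leaves $Cd_j^{-2}\|F\|_{L^2(Q_j^1)}$.

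The main obstacle is the $L^2$ term $d_j^{-1+d/2}\|F\|_{L^2(Q_j^1)}$, which is not small by energy arguments alone. I would handle it with a duality argument followed by a double kick-back. For the dual problem I would take a backward parabolic problem with data supported in $Q_j^1$ and discretized by the same HDG scheme. Its Gaussian decay (\ref{GE}) and the local estimates then give $\|F\|_{L^2(Q_j^1)}\le C(h d_j^{\gamma}\cdot\text{local }\|\partial_tF\|\ \text{and energy terms})+\text{decaying tails from other annuli}$. Summing over $j$ produces $\sum_j d_j^{1+d/2}\|\partial_tF\|_{L^2(Q_j)}\le C+C(M^{-\gamma}+C_M h^{\cdot})\sum_j d_j^{1+d/2}\|\partial_tF\|_{L^2(Q_j)}$. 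Taking $M$ large kicks the sum back into the left-hand side; a second kick-back handles the gradient/flux terms $\rho^{-1}\mathcal K_0$ that reappear through the duality. Since $\Gamma$ itself satisfies the bound by (\ref{GE}), the triangle inequality yields $\|\partial_t\Gamma_h\|_{L^1(Q_T)}\le C$.

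To obtain the pointwise-in-time bounds, I would write $\Gamma_h(t)=\Gamma_h(0)+\int_0^t\partial_t\Gamma_h$, which gives $\sup_t\|\Gamma_h(t)\|_{L^1}\le C$. For $t\|\partial_t\Gamma_h\|_{L^1}$ I would repeat the argument with $t\,\partial_t\Gamma_h$, i.e. with the weighted quantity $\|t\partial_{tt}\Gamma_h\|_{L^1(Q_T)}$, using (\ref{ee:4})--(\ref{ee:5}) and the $\partial_{tt}$ part of Lemma \ref{rgge}.
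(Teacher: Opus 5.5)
Your local machinery matches the paper's: Lemma~\ref{local_energy_estimate3} with $\beta_1=\beta_2=4+\frac d2$, Lemma~\ref{ee3} for the $t=0$ terms, duality for $\|\Gamma-\Gamma_h\|_{L^2(Q_j^1(z))}$, and the kick-back with $M$ large. The gap is in how you assemble the final bound. You reduce everything to $\|\partial_t\Gamma_h\|_{L^1(Q_T)}\le C$, and justify the continuous part by saying that $\Gamma$ satisfies this bound by (\ref{GE}). It does not. (\ref{GE}) only gives $\|\partial_t\Gamma(z,\cdot,t)\|_{L^1(\Omega)}\le Ct^{-1}$, which is not integrable at $t=0$; the regularization at scale $h$ caps it at $Ch^{-2}$, so $\int_0^1\|\partial_t\Gamma\|_{L^1(\Omega)}\,dt\le C|\ln h|$. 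This is sharp: for $z$ away from $\partial\Omega$ and $h^2\ll t\ll1$, $\|\partial_t\Gamma(z,\cdot,t)\|_{L^1(\Omega)}\ge c\,t^{-1}$, as for the free heat kernel. Your dyadic argument bounds $\|\partial_t(\Gamma-\Gamma_h)\|_{L^1(Q_T)}$ by a constant, so $\|\partial_t\Gamma_h\|_{L^1(Q_T)}\ge c|\ln h|-C$. The bound you call the core step is therefore false. Writing $\Gamma_h(t)=\Gamma_h(0)+\int_0^t\partial_s\Gamma_h\,ds$ only gives $\sup_t\|\Gamma_h(t)\|_{L^1(\Omega)}\le C|\ln h|$, which is exactly how the $|\ln h|$ arises in Theorem~\ref{stability}. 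Your treatment of $t\partial_t\Gamma_h$ through $\|t\partial_{tt}\Gamma_h\|_{L^1(Q_T)}$ fails the same way: $\|t\partial_{tt}\Gamma(t)\|_{L^1(\Omega)}$ also behaves like $t^{-1}$, and the identity still needs $\|\partial_s\Gamma_h\|_{L^1(Q_T)}$.

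The missing idea is to put only the \emph{error} under the time integral and to control $\Gamma$ pointwise in time:
\begin{align*}
\Gamma_h(t)&=\Gamma(t)+(\Pi_V\tilde\delta_z-\tilde\delta_z)+\int_0^t\partial_s(\Gamma_h-\Gamma)\,ds,\\
t\partial_t\Gamma_h(t)&=t\partial_t\Gamma(t)+\int_0^t\big(s\partial_{ss}(\Gamma_h-\Gamma)+\partial_s(\Gamma_h-\Gamma)\big)\,ds.
\end{align*}
The middle term in the first identity is orthogonal to $u_{0,h}\in V_h$, and is in any case bounded in $L^1$. The bound $\|\Gamma(t)\|_{L^1(\Omega)}+t\|\partial_t\Gamma(t)\|_{L^1(\Omega)}\le C$ comes from analyticity of the continuous semigroup on $L^1(\Omega)$, not from integrating (\ref{GE}) in time. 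What must be proved is then (\ref{max-stability-proof:01}): uniform bounds on $\|\partial_t(\Gamma_h-\Gamma)\|_{L^1(Q_T)}$ and $\|t\partial_{tt}(\Gamma_h-\Gamma)\|_{L^1(Q_T)}$. That is why the paper's $\mathcal M_j$ also contains $d_j^2\|\partial_{tt}(\Gamma-\Gamma_h)\|_{L^2(Q_j(z))}$.

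With this correction, your shortcut to (\ref{max-stability:1}) on $t\le1$ through $\sup_t\|\Gamma_h(t)\|_{L^1}$ works. The paper instead bounds $\|\Gamma_h-\Gamma\|_{L^1(Q_T)}$ and $\|\Gamma\|_{L^1(Q_T)}$ directly by the same dyadic/duality argument.

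Your large-time step does not close as described. Converting exponential $L^2$ decay to $L^\infty$ by an inverse estimate costs $h^{-d/2}$, or $|\ln h|$ if used only for $t\gtrsim|\ln h|$. You would also need an $h$-uniform $L^1\to L^2$ smoothing bound for the discrete semigroup at unit time, for instance a Nash-type argument built on (\ref{max-stability-proof:ad11}). The paper itself argues only on the normalized interval $T=1$.
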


\emph{Proof of (\ref{max-stability:2})}:
By Lemma \ref{prep} and the definitions of $\Gamma$ and $\Gamma_h$, we have for any $z\in K_z$ with $K_z\in\mathcal{T}_h$ and $t>0$
\begin{align}
u_h(z,t)=&\int_{\Omega}u_{0,h}(\Gamma_h-\Gamma) dx+\int_{\Omega}u_{0,h}\Gamma dx\nonumber\\
=&\int_0^t\int_{\Omega}u_{0,h}\partial_s(\Gamma_h-\Gamma) dxds+\int_{\Omega}u_{0,h}\Gamma dx\label{max-stability-proof:-1}\\
\leq& \|u_{0,h}\|_{L^{\infty}(\Omega)}\big(\|\partial_s(\Gamma_h-\Gamma)\|_{L^1((0,t);L^1(\Omega))}+\|\Gamma\|_{L^1(\Omega)}\big),\nonumber
\end{align}
and
\begin{align}
t\partial_t u_h(z,t)=&\int_{\Omega}u_{0,h}t\partial_t(\Gamma_h-\Gamma) dx+\int_{\Omega}u_{0,h}t\partial_t\Gamma dx\nonumber\\
=&\int_{\Omega}\int_0^t\big(s\partial_{ss}(\Gamma_h-\Gamma)+\partial_s(\Gamma_h-\Gamma)\big)u_{0,h} dsdx+\int_{\Omega}u_{0,h}t\partial_t\Gamma dx\nonumber\\
\leq&\|u_{0,h}\|_{L^{\infty}(\Omega)}\big(\|s\partial_{ss}(\Gamma_h-\Gamma)\|_{L^1((0,t);L^1(\Omega))}+\|\partial_s(\Gamma_h-\Gamma)\|_{L^1((0,t);L^1(\Omega))}
+t\|\partial_t\Gamma\|_{L^1(\Omega)}\big).\nonumber
\end{align}
By the analyticity of the continuous parabolic semigroup on $L^1(\Omega)$ (cf. \cite{tmw2006}), we know that
\begin{align}
\|\Gamma\|_{L^1(\Omega)}+t\|\partial_t\Gamma\|_{L^1(\Omega)}\leq C\|\Gamma(0)\|_{L^1(\Omega)}\leq C.\label{max-stability-proof:0}
\end{align}
Thus our remaining task is to prove
\begin{align}
\|\partial_s(\Gamma_h-\Gamma)\|_{L^1((0,t);L^1(\Omega))}+\|s\partial_{ss}(\Gamma_h-\Gamma)\|_{L^1((0,t);L^1(\Omega))}\leq C.\label{max-stability-proof:01}
\end{align}

To this end, by (\ref{de}) we have
\begin{align*}
&\|\partial_t(\Gamma_h-\Gamma)\|_{L^1((0,T);L^1(\Omega))}+\|t\partial_{tt}(\Gamma_h-\Gamma)\|_{L^1((0,T);L^1(\Omega))}\\
\leq& C(Mh)^{1+\frac{d}{2}}\big(\|\partial_t(\Gamma_h-\Gamma)\|_{L^2(Q_{\ast}(z))}+\|t\partial_{tt}(\Gamma_h-\Gamma)\|_{L^2(Q_{\ast}(z))}\big)\\
&+C\sum_{j=1}^Id_j^{1+\frac{d}{2}}\big(\|\partial_t(\Gamma_h-\Gamma)\|_{L^2(Q_{j}(z))}+\|t\partial_{tt}(\Gamma_h-\Gamma)\|_{L^2(Q_{j}(z))}\big),
\end{align*}
which, together with Lemma \ref{ee}, results in
\begin{align}
\|\partial_t(\Gamma_h-\Gamma)\|_{L^1((0,T);L^1(\Omega))}+\|t\partial_{tt}(\Gamma_h-\Gamma)\|_{L^1((0,T);L^1(\Omega))}
\leq C+C\sum_{j=1}^Id_j^{1+\frac{d}{2}}\mathcal{M}_j,\label{max-stability-proof:1}
\end{align}
where
\begin{align*}
\mathcal{M}_j=&d_j^{-1}\Big(\|\boldsymbol{\mathcal{R}}-\boldsymbol{\mathcal{R}}_h\|_{L^2(Q_j(z))}+\|\nabla(\Gamma-\Gamma_h)\|_{L^2(Q_j(z))}+\interleave(\Gamma_h,\widehat{\Gamma}_h)\interleave_{L^2(Q_j(z))}
\Big)\\
&+\|\partial_t(\Gamma-\Gamma_h)\|_{L^2(Q_j(z))}+d_j^2\|\partial_{tt}(\Gamma-\Gamma_h)\|_{L^2(Q_j(z))},\\
\interleave(v,&\mu)\interleave^2_{L^2(Q_j(z))}=\int_0^{d_j^2}\sum_{K\in\mathcal{T}_h}\|\sqrt{\tau}(\mathcal{L}v-\mu)\|^2_{L^2(\partial K\cap\Omega_j(z))}dt\\
&\quad\quad\quad\quad\quad\quad+\int_{d_j^2}^{d_{j-1}^2}\sum_{K\in\mathcal{T}_h}\|\sqrt{\tau}(\mathcal{L}v-\mu)\|^2_{L^2(\partial K\cap B_{d_{j-1}})}dt.
\end{align*}
Now we proceed to estimate the last term on the right hand side of (\ref{max-stability-proof:1}). By replacing $({\bm q},u,{\bm q}_h,u_h,\widehat{u}_h)$ with $(\boldsymbol{\mathcal{R}},\Gamma,\boldsymbol{\mathcal{R}}_h,\Gamma_h,\widehat{\Gamma}_h)$ and setting $\beta_1=\beta_2=\beta$ in Lemma \ref{local_energy_estimate3}, we obtain
\begin{align}
\mathcal{M}_j\leq C\big(I_j+X_j+H_j+d_j^{-2}\|\Gamma-\Gamma_h\|_{L^2(Q_j^1(z))}\big),\label{max-stability-proof:2}
\end{align}
where
\begin{align*}
I_j=&d_j^{-1}\|(\Gamma-\Gamma_h)(0)\|_{L^2(\Omega_j^1(z))}+d_j\|\partial_t(\Gamma-\Gamma_h)(0)\|_{L^2(\Omega_j^1(z))}+hd_j^2\|\partial_{tt}(\Gamma-
\Gamma_h)(0)\|_{L_h^2(\Omega_j^1(z))}\\
&+\|(\boldsymbol{\mathcal{R}}-\boldsymbol{\mathcal{R}}_h)(0)\|_{L^2(\Omega_j^1(z))}
+d_j^2\|\partial_t(\boldsymbol{\mathcal{R}}-\boldsymbol{\mathcal{R}}_h)(0)\|_{L^2(\Omega_j^1(z))}\\
&+\Big(\sum_{K\in\mathcal{T}_h(\Omega^1_j(z))}\|\sqrt{\tau}(\mathcal{L}\Gamma_h-\widehat{\Gamma}_h)(0)\|^2_{L^2(\partial K)}\Big)^{1/2}+d_j^2\Big(\sum_{K\in\mathcal{T}_h(\Omega^1_j(z))}\|\sqrt{\tau}\partial_t(\mathcal{L}\Gamma_h-\widehat{\Gamma}_h)(0)\|^2_{L^2(\partial K)}\Big)^{1/2}\\
&+h^{-1}\|\delta_{\Gamma}(0)\|_{L^2(\Omega_j^1(z))}+\|\nabla\delta_{\Gamma}(0)\|_{L^2(\Omega_j^1(z))}
+h^{-1}d_j^2\|\partial_t\delta_{\Gamma}(0)\|_{L^2(\Omega_j^1(z))}+hd_j^2\|\partial_{tt}\delta_{\Gamma}(0)\|_{L_h^2(\Omega_j^1(z))}\\
&+d_j^2\|\nabla\partial_t\delta_{\Gamma}(0)\|_{L^2(\Omega_j^1(z))}+\|\delta_{\boldsymbol{\mathcal{R}}}(0)\|_{L^2(\Omega_j^1(z))}
+d_j^2\|\partial_t\delta_{\boldsymbol{\mathcal{R}}}(0)\|_{L^2(\Omega_j^1(z))},\\
X_j=&h^{-1}d_j^{-1}\|\delta_{\Gamma}\|_{L^2(Q_j^1(z))}+d_j^{-1}\|\nabla\delta_{\Gamma}\|_{L^2(Q_j^1(z))}+h^{-1}d_j\|\partial_t\delta_{\Gamma}\|_{L^2(Q_j^1(z))}
+d_j\|\nabla\partial_t\delta_{\Gamma}\|_{L^2(Q_j^1(z))}\\
&+h^{-1}d_j^3\|\partial_{tt}\delta_{\Gamma}\|_{L^2(Q_j^1(z))}+d_j^3\|\nabla\partial_{tt}\delta_{\Gamma}\|_{L^2(Q_j^1(z))}+d_j^{-1}
\|\delta_{\boldsymbol{\mathcal{R}}}\|_{L^2(Q_j^1(z))}+h^{\gamma}d_j^{-1}\|\delta_{\boldsymbol{\mathcal{R}}}\|_{L^2H^{\gamma}(Q_j^1(z))} \\
&+d_j\|\partial_t\delta_{\boldsymbol{\mathcal{R}}}\|_{L^2(Q_j^1(z))}+h^{\gamma}d_j\|\partial_t\delta_{\boldsymbol{\mathcal{R}}}\|_{L^2H^{\gamma}(Q_j^1(z))}+
d_j^3\|\partial_{tt}\delta_{\boldsymbol{\mathcal{R}}}\|_{L^2(Q_j^1(z))}+h^{\gamma}d_j^3\|\partial_{tt}\delta_{\boldsymbol{\mathcal{R}}}\|_{L^2H^{\gamma}(Q_j^1(z))}\\
&+h^{-1}\|\delta_{\Gamma}\|_{L^{\infty}L^2(Q_j^1(z))}+\|\nabla\delta_{\Gamma}\|_{L^{\infty}L^2(Q_j^1(z))}
+h^{-1}d_j^2\|\partial_t\delta_{\Gamma}\|_{L^{\infty}L^2(Q_j^1(z))}\\
&+d_j^2\|\nabla\partial_t\delta_{\Gamma}\|_{L^{\infty}L^2(Q_j^1(z))}+\|\delta_{\boldsymbol{\mathcal{R}}}\|_{L^{\infty}L^2(Q_j^1(z))}
+h^{\gamma}\|\delta_{\boldsymbol{\mathcal{R}}}\|_{L^{\infty}H^{\gamma}(Q_j^1(z))} \\
&+d_j^2\|\partial_t\delta_{\boldsymbol{\mathcal{R}}}\|_{L^{\infty}L^2(Q_j^1(z))}
+h^{\gamma}d_j^2\|\partial_t\delta_{\boldsymbol{\mathcal{R}}}\|_{L^{\infty}H^{\gamma}(Q_j^1(z))}, \\
H_j=& (hd_j^{-1})^{\beta}\Big(d_j^{-1}\big(\|\boldsymbol{\mathcal{R}}-\boldsymbol{\mathcal{R}}_h\|_{L^2(Q_j^1(z))}
+\interleave(\Gamma_h,\widehat{\Gamma}_h)\interleave_{L^2(Q_j^1(z))}\big)\\
&+ d_j\big(\|\partial_t(\boldsymbol{\mathcal{R}}-\boldsymbol{\mathcal{R}}_h)\|_{L^2(Q_j^1(z))}+\interleave(\partial_{t}\Gamma_h,\partial_{t}
\widehat{\Gamma}_h)\interleave_{L^2(Q_j^1(z))}\big)\\
&+ hd_j^2\big(\|\partial_{tt}(\boldsymbol{\mathcal{R}}-\boldsymbol{\mathcal{R}}_h)\|_{L^2(Q_j^1(z))}+\interleave(\partial_{tt}\Gamma_h,\partial_{tt}
\widehat{\Gamma}_h)\interleave_{L^2(Q_j^1(z))}\big)\Big).
\end{align*}

Since $\Gamma(0)=\tilde{\delta}_z=0$ and $\Gamma_h(0)=\Pi_V\tilde{\delta}_z=0$ in $\Omega_j^1(z)$, according to the definition of $\Gamma$ we have
\begin{align*}
\partial_t\Gamma(0)=\Delta\Gamma(0)=0,\quad \partial_{tt}\Gamma(0)=\Delta\partial_t\Gamma(0)=0,\quad \text{in}~\Omega_j^1(z),
\end{align*}
which, together with Lemma \ref{ee3} and the fact that $\boldsymbol{\mathcal{R}}(0)=-\nabla\Gamma(0)$, arrives at
\begin{align}
I_j=&d_j\|\partial_t\Gamma_h(0)\|_{L^2(\Omega_j^1(z))}+hd_j^2\|\partial_{tt}
\Gamma_h(0)\|_{L_h^2(\Omega_j^1(z))}+\|\boldsymbol{\mathcal{R}}_h(0)\|_{L^2(\Omega_j^1(z))}
+d_j^2\|\partial_t\boldsymbol{\mathcal{R}}_h(0)\|_{L^2(\Omega_j^1(z))}\nonumber\\
&+\Big(\sum_{K\in\mathcal{T}_h(\Omega^1_j(z))}\|\sqrt{\tau}(\mathcal{L}\Gamma_h-\widehat{\Gamma}_h)(0)\|^2_{L^2(\partial K)}\Big)^{1/2}+d_j^2\Big(\sum_{K\in\mathcal{T}_h(\Omega^1_j(z))}\|\sqrt{\tau}\partial_t(\mathcal{L}\Gamma_h-\widehat{\Gamma}_h)(0)\|^2_{L^2(\partial K)}\Big)^{1/2}\nonumber\\
\leq&C(hd_j^{-1})^{\beta_3-1}\Big(\|\boldsymbol{\mathcal{R}}_h(0)\|_{L_h^2(\Omega_j^2(z))}
+\Big(\sum_{K\in\mathcal{T}_h(\Omega_j^2(z))}\|\sqrt{\tau}(\mathcal{L}\Gamma_h-\widehat{\Gamma}_h)(0)\|^2_{L^2(\partial K)}\Big)^{1/2}\Big)\nonumber\\
&+C(hd_j^{-1})^{\beta_3}d_j^2\Big(\|\partial_t\boldsymbol{\mathcal{R}}_h(0)\|_{L_h^2(\Omega_j^2(z))}
+\sum_{K\in\mathcal{T}_h(\Omega_j^2(z))}\|\sqrt{\tau}\partial_t(\mathcal{L}\Gamma_h-\widehat{\Gamma}_h)(0)\|^2_{L^2(\partial K)}\Big)^{1/2}.\nonumber
\end{align}
By choosing $\beta_3=4+\frac{d}{2}$, we obtain by (\ref{rddf}), (\ref{ee-proof:4}) and Lemma \ref{ee1} that
\begin{align}
I_j\leq C(hd_j^{-1})^{3+\frac{d}{2}}h^{-1-\frac{d}{2}}+C(hd_j^{-1})^{4+\frac{d}{2}}d_j^2h^{-3-\frac{d}{2}}\leq Chd_j^{-2-\frac{d}{2}}.\label{max-stability-proof:3}
\end{align}

On the other hand, with the interpolation error estimates and Lemma \ref{rgge} we have
\begin{align}
X_j\leq& C\big(h^{\gamma}d_j^{-1}\|\Gamma\|_{L^2H^{1+\gamma}(Q_j^2(z))}+h^{\gamma}d_j\|\partial_t\Gamma\|_{L^2H^{1+\gamma}(Q_j^2(z))}
+h^{\gamma}d_j^3\|\partial_{tt}\Gamma\|_{L^2H^{1+\gamma}(Q_j^2(z))}\big)\nonumber\\
&+C\big(h^{\gamma}\|\Gamma\|_{L^{\infty}H^{1+\gamma}(Q_j^2(z))}+h^{\gamma}d_j^2\|\partial_t\Gamma\|_{L^{\infty}H^{1+\gamma}(Q_j^2(z))}
\leq Ch^{\gamma}d_j^{-\gamma-1-\frac{d}{2}},\label{max-stability-proof:4}
\end{align}
Next we are going to deal with $H_j$. With Lemma \ref{ee} we have by setting $\beta=4+\frac{d}{2}$
\begin{align}
H_j\leq C(hd_j^{-1})^{\beta}\big(d_j^{-1}h^{-\frac{d}{2}}+d_jh^{-2-\frac{d}{2}}+hd_j^2h^{-4-\frac{d}{2}}\big)\leq Chd_j^{-2-\frac{d}{2}}.\label{max-stability-proof:5}
\end{align}

Thus collecting (\ref{max-stability-proof:2})-(\ref{max-stability-proof:5}) together we can derive that
\begin{align}
\sum_{j=1}^I d_j^{1+\frac{d}{2}}\mathcal{M}_j
\leq & C\sum_{j=1}^I\big(h^{\gamma}d_j^{-\gamma}+d_j^{-1+\frac{d}{2}}\|\Gamma-\Gamma_h\|_{L^2(Q_j^1(z))}\big)\leq C+C\sum_{j=1}^Id_j^{-1+\frac{d}{2}}\|\Gamma-\Gamma_h\|_{L^2(Q_j^1(z))}.\label{max-stability-proof:6}
\end{align}
Obviously, the remaining task is to approximate $\|\Gamma-\Gamma_h\|_{L^2(Q_j^1(z))}$. To this end, we consider a backward parabolic problem in the following
\begin{subequations}\label{bpp}
\begin{align}
\boldsymbol{\mathcal{W}}+\nabla w=0\quad &\text{in}~\Omega\times [0,T),\label{bpp:1}\\
-\partial_t w+\nabla\cdot\boldsymbol{\mathcal{W}}=v\quad &\text{in}~\Omega\times [0,T),\label{bpp:2}\\
w=0\quad &\text{on}~\partial\Omega\times [0,T],\\
w(x,T)=0\quad &\text{in}~\Omega,
\end{align}
\end{subequations}
where $v$ is a function supported in $Q_j^1(z)$ and $\|v\|_{L^2(Q_j^1(z))}=1$. By Theorem \ref{regularity}, we know that the above problem has a unique solution such that $w\in L^2((0,T);H^{1+\gamma}(\Omega))$. Multiply (\ref{bpp:2}) with $\Gamma-\Gamma_h$ to infer that
\begin{align}
\iint_{Q_T}v(\Gamma-\Gamma_h) dx dt
=&\int_0^T(w,\partial_t(\Gamma-\Gamma_h))_{\mathcal{T}_h} dt+((\Gamma-\Gamma_h)(0),w(0))\nonumber\\
&-\int_0^T(\boldsymbol{\mathcal{W}},\nabla(\Gamma-\Gamma_h))_{\mathcal{T}_h} dt-\int_0^T\langle\Gamma_h-\widehat{\Gamma}_h,\boldsymbol{\mathcal{W}}\cdot\Bn \rangle_{\partial\mathcal{T}_h}dt\label{max-stability-proof:7}\\
=&(\tilde{\delta}_z,w(0)-\Pi_V w(0))+\int_0^T(\partial_t(\Gamma-\Gamma_h),w-w_h)_{\mathcal{T}_h} dt\nonumber\\
&-\int_0^T(\boldsymbol{\mathcal{W}}-\boldsymbol{\mathcal{W}}_h, \nabla(\Gamma-\Gamma_h))_{\mathcal{T}_h} dt-\int_0^T\langle\Gamma_h-\widehat{\Gamma}_h, (\boldsymbol{\mathcal{W}}-\boldsymbol{\mathcal{W}}_h)\cdot\Bn\rangle_{\partial\mathcal{T}_h} dt+\mathcal{K},\nonumber
\end{align}
where $(w_h,\boldsymbol{\mathcal{W}}_h)\in V_h\times \BW_h$ and
\begin{align*}
\mathcal{K}=\int_0^T(\partial_t(\Gamma-\Gamma_h),w_h)_{\mathcal{T}_h} dt-\int_0^T(\boldsymbol{\mathcal{W}}_h, \nabla(\Gamma-\Gamma_h))_{\mathcal{T}_h} dt-\int_0^T\langle\Gamma_h-\widehat{\Gamma}_h, \boldsymbol{\mathcal{W}}_h\cdot\Bn\rangle_{\partial\mathcal{T}_h} dt.
\end{align*}
If $w_h\in V_h\cap H_0^1(\Omega)$, by (\ref{rGreen}), (\ref{rGreenHDG}), (\ref{bpp}) and integration by parts we have
\begin{align}
\mathcal{K}=&\int_0^T(\boldsymbol{\mathcal{R}}-\boldsymbol{\mathcal{R}}_h,\nabla w_h)_{\mathcal{T}_h}-\int_0^T(\boldsymbol{\mathcal{W}}_h,-\boldsymbol{\mathcal{R}}-\nabla\Gamma_h)_{\mathcal{T}_h}+\langle \boldsymbol{\mathcal{W}}_h\cdot\Bn,
\Gamma_h-\widehat{\Gamma}_h\rangle_{\partial\mathcal{T}_h} dt\label{max-stability-proof:ad1}\\
=&\int_0^T(\boldsymbol{\mathcal{R}}-\boldsymbol{\mathcal{R}}_h,\nabla (w_h-w))_{\mathcal{T}_h}-(\boldsymbol{\mathcal{W}}-\boldsymbol{\mathcal{W}}_h,\boldsymbol{\mathcal{R}}-\boldsymbol{\mathcal{R}}_h)_{\mathcal{T}_h} dt.\nonumber
\end{align}
On the other hand, for each $K\in\mathcal{T}_h$ we have
\begin{align*}
h_K^{-1}\|\Gamma_h-\widehat{\Gamma}_h\|^2_{L^2(\partial K)}=&h_K^{-1}\langle e_{\widehat{\Gamma}}-e_{\Gamma}, \Gamma_h-\widehat{\Gamma}_h\rangle_{\partial K}
+h_K^{-1}\langle \Pi_V\Gamma-\Pi_{\widehat{V}}\Gamma, \Gamma_h-\widehat{\Gamma}_h\rangle_{\partial K}\\
\leq &Ch_K^{-1/2}\|\Gamma_h-\widehat{\Gamma}_h\|_{L^2(\partial K)}\Big(h_K^{-1/2}\|e_{\Gamma}-e_{\widehat{\Gamma}}\|_{L^2(\partial K)}\\
&+h_K^{-1/2}\big(\|\Pi_V\Gamma-\Gamma\|_{L^2(\partial K)}+\|\Gamma-P_h^k\Gamma\|_{L^2(\partial K)}\big)\Big),
\end{align*}
where $P_h^k$ denotes the $L^2$-orthogonal projection onto $\mathcal{P}^k(K)$ on each $K\in\mathcal{T}_h$.
This inequality, together with Lemma \ref{tra}, Lemma \ref{H1} and interpolation error estimates, leads to
\begin{align}
h_K^{-1/2}\|\Gamma_h-\widehat{\Gamma}_h\|_{L^2(\partial K)}\leq & C\big(\|e_{\boldsymbol{\mathcal{R}}}\|_{L^2(K)}+\|\sqrt{\tau}(\mathcal{L}e_{\Gamma}-e_{\widehat{\Gamma}})\|_{L^2(\partial K)}+h^{\gamma}|\Gamma|_{H^{1+\gamma}(K)}\big)\nonumber\\
\leq &C\big(\|\boldsymbol{\mathcal{R}}-\boldsymbol{\mathcal{R}}_h\|_{L^2(K)}+\|\sqrt{\tau}(\mathcal{L}\Gamma_h-\widehat{\Gamma}_h)\|_{L^2(\partial K)}+h^{\gamma}|\Gamma|_{H^{1+\gamma}(K)}\big).\label{max-stability-proof:ad2}
\end{align}
Then by setting $w_h=I_hw$ and $\boldsymbol{\mathcal{W}}_h=\Pi_W\boldsymbol{\mathcal{W}}$ and using (\ref{max-stability-proof:7}), (\ref{max-stability-proof:ad1}), (\ref{max-stability-proof:ad2}), Lemma \ref{tra} and interpolation error estimates, we have
\begin{align}
\|\Gamma-\Gamma_h\|_{L^2(Q_j^1(z))}\leq &Ch^{1+\gamma-\frac{d}{2}}|w(0)|_{H^{1+\gamma}(\Omega\backslash\Omega_j^2(z))}+C\sum_{i\in\{\ast,1,\cdots,I\}}
|w|_{L^2H^{1+\gamma}(Q_i^1(z))}\times\nonumber\\
&\Big(h^{1+\gamma}\|\partial_t(\Gamma-\Gamma_h)\|_{L^2(Q_i(z))}+h^{\gamma}\|\nabla(\Gamma-\Gamma_h)\|_{L^2(Q_i(z))}
+h^{2\gamma}|\Gamma|_{L^2H^{1+\gamma}(Q_i^1(z))}\label{max-stability-proof:9}\\
&+h^{\gamma}\left(\|\boldsymbol{\mathcal{R}}-\boldsymbol{\mathcal{R}}_h\|_{L^2(Q_i^1(z))}
+\interleave(\Gamma_h,\widehat{\Gamma}_h)\interleave_{L^2(Q_i^1(z))}\right)\Big),\nonumber
\end{align}
where $I_h$ denotes the Lagrange interpolation operator.
From \cite[(5.24) and (5.31)]{l2019}, we know that
\begin{align}
|w(0)|_{H^{1+\gamma}(\Omega\backslash\Omega_j^2(z))}\leq Cd_j^{-\gamma},\quad |w|_{L^2H^{1+\gamma}(Q_i^1(z))}\leq Cd_i^{1-\gamma}\left(\frac{\min\{d_i,d_j\}}{\max\{d_i,d_j\}}\right)^{\gamma}=Cd_i^{1-\gamma}m_{ij}^{\gamma}.\label{max-stability-proof:ad5}
\end{align}
Thus we have
\begin{align}
\sum_{j=1}^Id_j^{-1+\frac{d}{2}}\|\Gamma-\Gamma_h\|_{L^2(Q_j^1(z))}&
\leq C\sum_{j=1}^I(hd_j^{-1})^{1+\gamma-\frac{d}{2}}
+C\sum_{i\in\{\ast,1,\cdots,I\}}\Big(h^{1+\gamma}\|\partial_t(\Gamma-\Gamma_h)\|_{L^2(Q_i(z))}\nonumber\\
&+h^{\gamma}\Big(\|\nabla(\Gamma-\Gamma_h)\|_{L^2(Q_i(z))}+\|\boldsymbol{\mathcal{R}}-\boldsymbol{\mathcal{R}}_h\|_{L^2(Q_i(z))}
+\interleave(\Gamma_h,\widehat{\Gamma}_h)\interleave_{L^2(Q_i(z))}\Big)\label{max-stability-proof:ad7}\\
&+h^{2\gamma}d_i^{-\gamma-\frac{d}{2}}\Big)\sum_{j=1}^Id_j^{-1+\frac{d}{2}}d_{i}^{1-\gamma}m_{ij}^{\gamma},\nonumber
\end{align}
which, together with
\begin{align*}
\sum_{j=1}^Id_j^{-1+\frac{d}{2}}d_{i}^{1-\gamma}m_{ij}^{\gamma}
=d_i^{\frac{d}{2}-\gamma}\left(\sum_{j=1}^i\left(\frac{d_i}{d_j}\right)^{\gamma+1-\frac{d}{2}}+\sum_{j=i+1}^I\left(\frac{d_j}{d_i}
\right)^{\gamma+\frac{d}{2}-1}\right)\leq Cd_i^{\frac{d}{2}-\gamma},
\end{align*}
leads to
\begin{align*}
\sum_{j=1}^Id_j^{1+\frac{d}{2}}\mathcal{M}_j\leq& C+C\sum_{i\in\{\ast,1,\cdots,I\}}\left(hd_i^{-1}\right)^{2\gamma}
+C\sum_{i\in\{\ast,1,\cdot,I\}}d_i^{1+\frac{d}{2}}\mathcal{M}_i\left(hd_i^{-1}\right)^{\gamma}.
\end{align*}
By choosing $M$ to be large enough, we have
\begin{align}
\sum_{j=1}^Id_j^{1+\frac{d}{2}}\mathcal{M}_j\leq C.\label{max-stability-proof:10}
\end{align}
Then we conclude the proof by applying the above inequality to (\ref{max-stability-proof:1}).

{\emph{Proof of (\ref{max-stability:1})}:
By Lemma \ref{prep}, (\ref{max-stability-proof:-1}), (\ref{max-stability-proof:0}) and (\ref{max-stability-proof:01}), we have
\begin{align*}
u_h(z,t)\leq& \|f\|_{L^{\infty}((0,t);L^{\infty}(\Omega))}(\|\Gamma_h-\Gamma\|_{L^1((0,t);L^1(\Omega))}+\|\Gamma\|_{L^1((0,t):L^1(\Omega))})\\
&+\|u_{0,h}\|_{L^{\infty}(\Omega)}(\|\partial_s(\Gamma_h-\Gamma)\|_{L^1((0,t);L^1(\Omega))}+\|\Gamma\|_{L^1(\Omega)})\\
\leq &\|f\|_{L^{\infty}((0,t);L^{\infty}(\Omega))}(\|\Gamma_h-\Gamma\|_{L^1((0,t);L^1(\Omega))}+\|\Gamma\|_{L^1((0,t):L^1(\Omega))})
+C\|u_{0,h}\|_{L^{\infty}(\Omega)}.
\end{align*}
In order to complete the proof, it is obvious from the above inequality that we only need to prove the following estimate:
\begin{align*}
\|\Gamma_h-\Gamma\|_{L^1((0,t);L^1(\Omega))}+\|\Gamma\|_{L^1((0,t):L^1(\Omega))}\leq C.
\end{align*}

By (\ref{de}), Lemma \ref{ee}, (\ref{ee-proof:ad4}) and Poincar\'{e} inequality, we obtain that
\begin{align*}
&\|\Gamma_h-\Gamma\|_{L^1((0,T);L^1(\Omega))}+\|\Gamma\|_{L^1((0,T):L^1(\Omega))}\\
\leq& C(Mh)^{1+\frac{d}{2}}\left(\|\Gamma_h-\Gamma\|_{L^2(Q_{\ast}(z))}+\|\Gamma\|_{L^2(Q_{\ast}(z))}\right)\\
&+C\sum_{j=1}^Id_j^{1+\frac{d}{2}}\left(\|\Gamma_h-\Gamma\|_{L^2(Q_{j}(z))}+\|\Gamma\|_{L^2(Q_j(z))}\right)\\
\leq& C(Mh)^{1+\frac{d}{2}}\Big(\|\nabla\Gamma_h\|_{L^2((0,T);L_h^2(\Omega))}+\Big(\int_0^T\sum_{K\in\mathcal{T}_h}h_K^{-1}\|\Gamma_h-\widehat{\Gamma}_h\|^2_{L^2(\partial K)} dt\Big)^{1/2}\\
&+\|\nabla\Gamma\|_{L^2(Q_T)}\Big)+C\sum_{j=1}^Id_j^{1+\frac{d}{2}}\left(\|\Gamma_h-\Gamma\|_{L^2(Q_{j}(z))}+\|\Gamma\|_{L^2(Q_j(z))}\right)\\
\leq& Ch+C\sum_{j=1}^Id_j^{1+\frac{d}{2}}\left(\|\Gamma_h-\Gamma\|_{L^2(Q_{j}(z))}+\|\Gamma\|_{L^2(Q_j(z))}\right).
\end{align*}
Note that in the derivation of the above inequality we have used the following discrete Sobolev embedding inequality (see \cite[Theorem 2.1]{de2010})
for $1\leq p\leq 6$ and any $\mu\in L_0^2(\mathcal{E}_h)=\{v: v|_F\in L^2(F)~\forall F\in\mathcal{E}_h,~v=0~\text{on}~\mathcal{E}_h^{\partial}\}$:
\begin{align}
\|\Gamma_h\|_{L^p(\Omega)}\leq C\Big(\|\nabla\Gamma_h\|^2_{L_h^2(\Omega)}+\sum_{K\in\mathcal{T}_h}h_K^{-1}\|\Gamma_h-\mu\|^2_{L^2(\partial K)}\Big)^{1/2}.\label{max-stability-proof:ad11}
\end{align}
On the other hand, with (\ref{max-stability-proof:ad7}) and (\ref{max-stability-proof:10}) we have
\begin{align}
&\sum_{j=1}^Id_j^{1+\frac{d}{2}}\left(\|\Gamma_h-\Gamma\|_{L^2(Q_{j}(z))}+\|\Gamma\|_{L^2(Q_j(z))}\right)\nonumber\\
\leq& \sum_{j=1}^{I}d_j^{-1+\frac{d}{2}}\|\Gamma_h-\Gamma\|_{L^2(Q^1_{j}(z))}+\sum_{j=1}^Id_j^{1+\frac{d}{2}}\|\Gamma\|_{L^2(Q_j(z))}\label{max-stability-proof:11}\\
\leq& C+C\sum_{i\in\{\ast,1,\cdots,I\}}d_i^{1+\frac{d}{2}}\mathcal{M}_i\left(hd_i^{-1}\right)^{\gamma}+\sum_{j=1}^Id_j^{1+\frac{d}{2}}\|\Gamma\|_{L^2(Q_j(z))}\nonumber\\
\leq& C+\sum_{j=1}^Id_j^{1+\frac{d}{2}}\|\Gamma\|_{L^2(Q_j(z))},\nonumber
\end{align}
if $M$ is chosen to be large enough. Thus the remaining task is to estimate $\|\Gamma\|_{L^2(Q_j(z))}$. To this end, let $(\boldsymbol{\mathcal{W}},w)$ be the solution of problem (\ref{bpp}) with $v$ being a function supported in $Q_j(z)$ and $\|v\|_{L^2(Q_j(z))}= 1$. Then by (\ref{rddf}), (\ref{rGreen}), (\ref{bpp}) and a simple calculation, we have
\begin{align*}
\iint_{Q_T}v\Gamma dxdt=&(\widetilde{\delta}_z(x),w(x,0))+\int_0^T(w,\partial_t\Gamma) dt-\int_0^T(\boldsymbol{\mathcal{W}},\nabla\Gamma) dt\\
=&(\widetilde{\delta}_z(x),w(x,0))+\int_0^T(\boldsymbol{\mathcal{R}},\nabla w)dt-\int_0^T(\boldsymbol{\mathcal{W}},\nabla\Gamma) dt\\
=&(\widetilde{\delta}_z(x),w(x,0))\leq C\|w(x,0)\|_{L^{\infty}(K_z)}.
\end{align*}
Using the Green's function defined in (\ref{Green}) and (\ref{bpp}), we have the following expression:
\begin{align*}
w(x,0)=&(w(y,0),\delta_x(y))=(w(y,0),G(x,y,0))\nonumber\\
=&-\int_{\Omega}\int_0^T\partial_t(w(y,t)G(x,y,t)) dt dy\\
=&\iint_{Q_T}v(y,t)G(x,y,t)dydt=\iint_{Q_j(z)}vG(x,y,t) dydt,\nonumber
\end{align*}
which, together with (\ref{GE}), leads to
\begin{align*}
|w(x,0)|\leq \|v\|_{L^2(Q_j(z))}\|G(x,\cdot,\cdot)\|_{L^2(Q_j^1(x))}\leq Cd_j^{1-\frac{d}{2}},\quad \forall x\in K_z.
\end{align*}
Therefore
\begin{align*}
\|\Gamma\|_{L^2(Q_j(z))}\leq Cd_j^{1-\frac{d}{2}},
\end{align*}
which, together with (\ref{max-stability-proof:11}), yields that
\begin{align*}
\sum_{j=1}^Id_j^{1+\frac{d}{2}}\left(\|\Gamma_h-\Gamma\|_{L^2(Q_{j}(z))}+\|\Gamma\|_{L^2(Q_j(z))}\right)\leq C+C\sum_{j=1}^Id_j^2\leq C.
\end{align*}
Then we complete the proof.

\section{Maximum-norm error estimates}\label{sec5}
We define the discrete Laplacian operator $\Delta_h$ by
\begin{align}
(\Delta_h u_h,v_h)=(\Bq_h,\nabla v_h)_{\mathcal{T}_h}-\langle\Bq_h\cdot\textbf{n}+\tau(\mathcal{L}u_h-\widehat{u}_h),v_h\rangle_{\partial\mathcal{T}_h},\quad \forall u_h,v_h\in V_h,\label{dl:1}
\end{align}
where $(\Bq_h,\widehat{u}_h)\in \BW_h\times \widehat{V}_h$ is the solution of the following problem
\begin{align}
(\Bq_h,\Bw_h)_{\mathcal{T}_h}-(u_h,\nabla\cdot\Bw_h)_{\mathcal{T}_h}+\langle\widehat{u}_h,\Bw_h\cdot\textbf{n}\rangle_{\partial \mathcal{T}_h}=0,&\quad \forall \Bw_h\in\BW_h,\label{dl:2}\\
\langle\Bq_h\cdot\textbf{n}+\tau(\mathcal{L}u_h-\widehat{u}_h),\widehat{v}_h\rangle_{\partial\mathcal{T}_h}=0,&\quad \forall \widehat{v}_h\in\widehat{V}_h.\label{dl:3}
\end{align}
By Lemma \ref{ee1}, it is easy to observe that $(\Bq_h,\widehat{u}_h)$ is uniquely solvable for the given $u_h$, thus the discrete Laplacian operator is well-defined and the semi-discrete scheme (\ref{HDG}) can be rewritten as
\begin{align}
\partial_t u_h-\Delta_h u_h=f_h=\Pi_V f.\label{dl:4}
\end{align}

The main results of this section are to prove Theorem~\ref{stability} and Corollary~\ref{max-ee}.
\begin{theorem}\label{stability}
Let $(\Bq_h,u_h,\widehat{u}_h)\in\BW_h\times V_h\times \widehat{V}_h$ be the solution of problem (\ref{HDG}) and let the discrete Laplacian operator $\Delta_h$
be defined as above, then if $u_{0,h}=0$ we have
\begin{align}
\|\partial_t u_h\|_{L^{\infty}((0,T);L^{p}(\Omega))}+\|\Delta_h u_h\|_{L^{\infty}((0,T);L^{p}(\Omega))}\leq C|\ln h|\|f\|_{L^{\infty}((0,T);L^{p}(\Omega))},\quad \forall 1\leq p\leq \infty,
\end{align}
where $C$ is a positive constant independent of $T$.
\end{theorem}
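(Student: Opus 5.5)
The plan is to use the semigroup representation of the semi-discrete solution together with the discrete semigroup estimate (\ref{max-stability:2}). Write $E_h(t)=e^{t\Delta_h}$ on $V_h$. Since $u_{0,h}=0$ and $\partial_t u_h-\Delta_h u_h=f_h$ by (\ref{dl:4}), we have
\begin{align*}
u_h(t)=\int_0^t E_h(t-s)f_h(s)\,ds,\qquad \Delta_h u_h(t)=\int_0^t \Delta_h E_h(t-s)f_h(s)\,ds .
\end{align*}
Also $\partial_t u_h=\Delta_h u_h+f_h$, and $\|f_h\|_{L^p}\le C\|f\|_{L^p}$ because $\Pi_V$ is an elementwise $L^2$-projection and hence $L^p$-stable. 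So it suffices to bound $\|\Delta_h u_h(t)\|_{L^p(\Omega)}$.

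First I establish the analytic-semigroup estimates
\begin{align*}
\|E_h(t)v_h\|_{L^p(\Omega)}+t\|\Delta_hE_h(t)v_h\|_{L^p(\Omega)}\le C\|v_h\|_{L^p(\Omega)},\qquad 1\le p\le\infty .
\end{align*}
\begin{itemize}
\item For $p=\infty$ this is exactly (\ref{max-stability:2}), since $\partial_tE_h=\Delta_hE_h$.
\item For $p=1$ I use duality. The operator $\Delta_h$ is self-adjoint on $V_h$ in the $L^2$ inner product. To see this, test (\ref{dl:1}) with $v_h$ and use (\ref{dl:2})--(\ref{dl:3}) for both $u_h$ and $v_h$, exactly as in the computation in the proof of Lemma~\ref{prep}. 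This gives $(\Delta_h u_h,v_h)=-(\Bq_h,\Bp_h)_{\mathcal{T}_h}-\langle\tau(\mathcal{L}u_h-\widehat u_h),\mathcal{L}v_h-\widehat v_h\rangle_{\partial\mathcal{T}_h}$, which is symmetric in $u_h,v_h$. Hence $E_h(t)$ and $\Delta_hE_h(t)$ are self-adjoint. Testing against $\Pi_V g$ with $g\in L^\infty$, and using $L^\infty$-stability of $\Pi_V$, gives the $L^1$ bound.
\item Riesz--Thorin interpolation then gives all $1<p<\infty$.
\end{itemize}

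Next I split the Duhamel integral. For fixed $t$, write
\begin{align*}
\Delta_h u_h(t)=\int_0^t\Delta_hE_h(t-s)\big(f_h(s)-f_h(t)\big)ds+(E_h(t)-I)f_h(t).
\end{align*}
This is the usual trick, but it requires time regularity of $f$, which we do not have. So I will instead split at $s=t-h^2$ (assuming $t>h^2$; otherwise only the second piece below appears).
\begin{itemize}
\item On $(0,t-h^2)$, the semigroup estimate gives $\|\Delta_hE_h(t-s)f_h(s)\|_{L^p}\le C(t-s)^{-1}\|f\|_{L^p}$. Integrating yields $C\ln(t/h^2)\|f\|$. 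Because the constant must not depend on $T$, I will instead use exponential decay: the discrete Poincaré inequality (\ref{max-stability-proof:ad11}) gives a smallest eigenvalue $\lambda_1\ge c>0$ of $-\Delta_h$ uniformly in $h$. Together with analyticity this gives $\|\Delta_hE_h(t)\|_{L^p\to L^p}\le Ct^{-1}e^{-ct}$. I will get this in $L^\infty$ from (\ref{max-stability:2}) applied on $[t/2,t]$ combined with $L^2$ decay and an inverse estimate, or simply by noting that the paper's constants are $T$-independent via the rescaling argument. Then $\int_{h^2}^\infty \tau^{-1}e^{-c\tau}d\tau\le C|\ln h|$.
\item On $(t-h^2,t)$, I use the inverse estimate $\|\Delta_hv_h\|_{L^p}\le Ch^{-2}\|v_h\|_{L^p}$. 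This follows from Lemma~\ref{ee1} and Lemma~\ref{inv} in $L^2$ locally, then elementwise scaling; a rigorous route in $L^p$ is to represent $\Delta_h$ through the local structure. Combined with $L^p$-stability of $E_h$, it yields a contribution $Ch^{-2}\cdot h^2\|f\|=C\|f\|$.
\end{itemize}

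The main obstacle is the $L^p$ inverse inequality $\|\Delta_h v_h\|_{L^p}\le Ch^{-2}\|v_h\|_{L^p}$, since $\Delta_h$ is nonlocal through the global solve (\ref{dl:2})--(\ref{dl:3}) for $(\Bq_h,\widehat u_h)$. I would avoid it by writing $\Delta_hE_h(\tau)=\frac{d}{d\tau}E_h(\tau)$ and integrating exactly: $\int_0^{h^2}\Delta_hE_h(\tau)f_h(t-\tau)\,d\tau$ can be bounded by $\int_0^{h^2}\|\Delta_h E_h(\tau)\|\,d\tau$. That integral diverges logarithmically, so the cutoff $h^2$ must be compensated. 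I would first try to prove the $L^\infty$ inverse bound via the localized decay of Lemma~\ref{ee3}-type arguments, since $\Delta_h\Pi_V\tilde\delta_z=\partial_t\Gamma_h(0)$, and (\ref{ee-proof:4}) together with Lemma~\ref{ee3} gives $\|\partial_t\Gamma_h(0)\|_{L^1}\le Ch^{-2}$ after summing over the annuli $\Omega_j(z)$. Duality and interpolation then give every $p$.
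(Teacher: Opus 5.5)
Your proposal is correct, but it is organized differently from the paper's proof. The paper works directly with the discrete Green's function. From Lemma~\ref{prep} it writes $\partial_t u_h(z,t)=\int_0^t\int_\Omega f_h(x,s)\,\partial_t\Gamma_h(z,x,t-s)\,dx\,ds+f_h(z,t)$ and proves the single kernel bound $\int_0^\infty\sup_z\|\partial_s\Gamma_h(z,\cdot,s)\|_{L^1(\Omega)}\,ds\le C|\ln h|$; the passage to general $p$ is delegated to \cite{l2019}. Near $s=0$ it uses no sharp inverse estimate at all. Instead it interpolates $\|\partial_s\Gamma_h(s)\|_{L^1}\le (C/s)^{1-\theta}(Ch^{-2-d/2})^{\theta}$, combining (\ref{stability-proof:2}) with the crude $L^2$ bound (\ref{ee-proof:4}) and taking $\theta\sim 1/|\ln h|$. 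For $s\ge t_0\sim|\ln h|$ it uses the $L^2$ exponential decay (\ref{stability-proof:3}).

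Your Duhamel/analytic-semigroup formulation gives an explicit $L^p$ argument. The self-adjointness of $\Delta_h$ that you verify holds for all three methods; for HDG-2 this is because $\tau(\Pi_{\widehat V}u_h-\widehat u_h)|_F\in\mathcal{P}^k(F)$. It is exactly what underlies the symmetry of $\Gamma_h$ that the paper uses implicitly, and duality plus Riesz--Thorin then gives $\|E_h(t)\|+t\|\Delta_hE_h(t)\|\le C$ on every $L^p$.

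Your Lemma~\ref{ee3} route to $\|\Delta_hv_h\|_{L^\infty}\le Ch^{-2}\|v_h\|_{L^\infty}$ also works: each annulus contributes $Ch^{-2}(hd_j^{-1})^{\beta_3-d/2}$, which is summable once $\beta_3>d/2$, and $\Omega_{\ast}(z)$ contributes $Ch^{-2}$. However, the ``main obstacle'' you single out is not really one. The cutoff enters only through $\ln(t_0/\text{cutoff})$, so you can cut at $\tau=h^{2+d/2}$ instead of $h^2$ and use the crude bound $\|\Delta_h\|_{L^p\to L^p}\le Ch^{-2-d/2}$. This follows from $\|\Delta_h\|_{L^2\to L^2}\le Ch^{-2}$ (as in (\ref{ee-proof:4})) and norm equivalence on $V_h$ with $\text{diam}(\Omega)\le 1$. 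This is precisely what the paper's choice of $\theta$ does in disguise.

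Two small corrections:
\begin{itemize}
\item The $T$-independence of the constant in (\ref{max-stability:2}) only yields $\|\Delta_hE_h(\tau)\|\le C\tau^{-1}$ for all $\tau$. It says nothing about integrability at infinity, so that alternative does not give the decay you need.
\item The route through $L^2$ decay plus an inverse estimate gives $\min\{C\tau^{-1},\,Ch^{-d/2}\tau^{-1}e^{-c\tau}\}$, not a uniform $C\tau^{-1}e^{-c\tau}$. This still suffices, because splitting at $\tau_0\sim|\ln h|$ costs only another multiple of $|\ln h|$.
\end{itemize}
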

\begin{proof}
By Lemma \ref{prep}, we have for any $z\in K_z$ with $K_z\in\mathcal{T}_h$ and $t>0$
\begin{align}
\partial_t u_h(z,t)=\int_0^t\int_{\Omega}f_h(x,s)\partial_t\Gamma_h(z,x,t-s)dxds+f_h(z,t).\nonumber
\end{align}
Therefore for any $1\leq p\leq \infty$, by using a similar procedure with that presented in \cite[Subsection 4.5]{l2019} we obtain
\begin{align}
\|\partial_t u_h(\cdot,t)\|_{L^p(\Omega)}\leq& C\left\|\int_0^t\int_{\Omega}f_h(x,s)\partial_t\Gamma_h(\cdot,x,t-s)dxds\right\|_{L^p(\Omega)}+
\|f_h(\cdot,t)\|_{L^p(\Omega)}\nonumber\\
\leq&C\left(\int_0^t\sup_{z\in\Omega}\int_{\Omega}
\left|\partial_s\Gamma_h(z,x,s)\right|dxds\right)\|f_h\|_{L^{\infty}((0,t);L^p(\Omega))}+\|f_h(\cdot,t)\|_{L^p(\Omega)}.\nonumber
\end{align}
Obviously, the proof will be finished if we can prove
\begin{align}
\int_0^t\sup_{z\in\Omega}\int_{\Omega}
\left|\partial_s\Gamma_h(z,x,s)\right|dxds\leq C|\ln h|.\label{stablity-proof:1}
\end{align}

By (\ref{max-stability-proof:0}), (\ref{max-stability-proof:01}) and triangle inequality, we have
\begin{align}
\|t\partial_t\Gamma_h(z,\cdot,t)\|_{L^1(\Omega)}\leq& \|t\partial_t(\Gamma_h-\Gamma)\|_{L^1(\Omega)}+\|t\partial_t\Gamma\|_{L^1(\Omega)}\nonumber\\
\leq&C+\left\|\int_0^ts\partial_{ss}(\Gamma_h-\Gamma)+\partial_s(\Gamma_h-\Gamma) ds\right\|_{L^1(\Omega)}\label{stability-proof:2}\\
\leq& C+\|s\partial_{ss}(\Gamma_h-\Gamma)\|_{L^1((0,t);L^1(\Omega))}+\|\partial_s(\Gamma_h-\Gamma)\|_{L^1((0,t);L^1(\Omega))}\leq C.\nonumber
\end{align}
On the other hand, by finding the derivative of (\ref{rGreenHDG}) and choosing the test function as $(\Bw_h,v_h,\widehat{v}_h)=(\partial_t\boldsymbol{\mathcal{R}}_h,\partial_t\Gamma_h,\partial_t\widehat{\Gamma}_h)$, we obtain
\begin{align*}
\frac{1}{2}\frac{d}{dt}\|\partial_t\Gamma_h\|_{L^2(\Omega)}^2+\|\partial_t\boldsymbol{\mathcal{R}}_h\|_{L^2(\Omega)}^2+
\sum_{K\in\mathcal{T}_h}\|\sqrt{\tau}\partial_t(\mathcal{L}\Gamma_h-\widehat{\Gamma}_h)\|^2_{L^2(\partial K)}=0,
\end{align*}
which, together with (\ref{ee-proof:ad4}) and the discrete Sobolev embedding inequality (\ref{max-stability-proof:ad11}), leads to
\begin{align*}
\frac{d}{dt}\|\partial_t\Gamma_h\|_{L^2(\Omega)}^2+C\|\partial_t\Gamma_h\|_{L^2(\Omega)}^2\leq 0,
\end{align*}
where $C$ is a positive constant independent of $h$ and the time $t$. From the above inequality, we derive the exponential decay of $\partial_t\Gamma_h$
with respect to $t\geq t_0$
\begin{align}
\|\partial_t\Gamma_h(z,\cdot,t)\|^2_{L^2(\Omega)}\leq& e^{-C(t-t_0)}\|\partial_t\Gamma_h(z,\cdot,t_0)\|_{L^2(\Omega)}^2\nonumber\\
\leq & e^{-C(t-t_0)}e^{-Ct_0}\|\partial_t\Gamma_h(z,\cdot,0)\|_{L^2(\Omega)}^2\label{stability-proof:3}\\
\leq & e^{-C(t-t_0)}e^{-Ct_0}h^{-4-d}\leq e^{-C(t-t_0)},\nonumber
\end{align}
by setting $t_0=\frac{(4+d)}{C}\ln\frac{1}{h}$ and using (\ref{ee-proof:4}). Moreover, with (\ref{stability-proof:2}) and (\ref{ee-proof:4}) we can obtain
for any $\theta\in (0,1)$ and $t\in (0,t_0)$
\begin{align*}
\|\partial_t\Gamma_h(z,\cdot,t)\|_{L^1(\Omega)}=&\|\partial_t\Gamma_h(z,\cdot,t)\|_{L^1(\Omega)}^{1-\theta}\|\partial_t\Gamma_h(z,\cdot,t)\|_{L^1(\Omega)}^{\theta}\nonumber\\
\leq&Ct^{\theta-1}\|\partial_t\Gamma_h(z,\cdot,0)\|_{L^2(\Omega)}^{\theta}=\frac{C}{h^{(2+\frac{d}{2})\theta}t^{1-\theta}},
\end{align*}
which, together with (\ref{stability-proof:3}), results in
\begin{align*}
\int_0^t\sup_{z\in\Omega}\int_{\Omega}\left|\partial_s\Gamma_h(z,x,s)\right|dxds=&\int_0^{t_0}\sup_{z\in\Omega}\int_{\Omega}
\left|\partial_s\Gamma_h(z,x,s)\right|dxds+\int_{t_0}^t\sup_{z\in\Omega}\int_{\Omega}
\left|\partial_s\Gamma_h(z,x,s)\right|dxds\\
\leq &\int_0^{t_0}\frac{C}{h^{(2+\frac{d}{2})\theta}t^{1-\theta}}dt+\int_{t_0}^tCe^{-C(t-t_0)}dt\\
=&\frac{Ct_0^{\theta}}{\theta h^{(2+\frac{d}{2})\theta}}+C\leq C\ln\frac{1}{h},
\end{align*}
by choosing $\theta=\frac{1}{(2+\frac{d}{2})\ln\frac{1}{h}}$. Thus,
$$\|\partial_tu_h(\cdot,t)\|_{L^p(\Omega)}\leq C|\ln h|\|f_h\|_{L^{\infty}((0,T);L^p(\Omega))},\quad \forall 0<t<T,$$
which, together with (\ref{dl:4}), completes the proof.
\end{proof}

Let $(\Bq_h^e,u_h^e,\widehat{u}_h^e)\in \BW_h\times V_h\times \widehat{V}_h$ be the solution of the following problem:
\begin{subequations}\label{HDG-E}
\begin{align}
(\Bq_h^e,\Bw_h)_{\mathcal{T}_h}-(u_h^e,\nabla\cdot\Bw_h)_{\mathcal{T}_h}+\langle\widehat{u}_h^e,\Bw_h\cdot\textbf{n}\rangle_{\partial \mathcal{T}_h}=0,\quad \forall \Bw_h\in\BW_h\\
-(\Bq_h^e,\nabla v_h)_{\mathcal{T}_h}+\langle\Bq_h^e\cdot\textbf{n}+\tau(\mathcal{L}u_h^e-\widehat{u}_h^e),v_h\rangle_{\partial\mathcal{T}_h}=(f-\partial_tu,v_h),\quad \forall v_h\in V_h,\\
\langle\Bq_h^e\cdot\textbf{n}+\tau(\mathcal{L}u_h^e-\widehat{u}_h^e),\widehat{v}_h\rangle_{\partial\mathcal{T}_h}=0,\quad \forall \widehat{v}_h\in\widehat{V}_h.
\end{align}
\end{subequations}
Obviously, (\ref{HDG-E}) is the elliptic version of (\ref{HDG}) for the Poisson equations with external force $f-\partial_t u$. Then based on Theorem \ref{stability}, we have the following maximum-norm error estimate.
\begin{corollary}\label{max-ee}
Let $u$, $(\Bq_h,u_h,\widehat{u}_h)$ and $(\Bq_h^e,u_h^e,\widehat{u}_h^e)$ be the solutions of problems (\ref{model}), (\ref{HDG}) and (\ref{HDG-E})
respectively. Let the discrete Laplacian operator $\Delta_h$ be defined as above. Then we have
\begin{align}
\|u-u_h\|_{L^{\infty}((0,T);L^{\infty}(\Omega))}\leq& C|\ln h|\Big(\|u-u_h^e\|_{L^{\infty}((0,T);L^{\infty}(\Omega))}+\|u-\Pi_Vu\|_{L^{\infty}((0,T);L^{\infty}(\Omega))}\nonumber\\
&+\|\Pi_Vu(0)-u_h(0)\|_{L^{\infty}(\Omega)}\Big).
\end{align}
\end{corollary}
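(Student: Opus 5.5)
The plan is the standard elliptic-projection splitting. Write
\[
u-u_h=(u-u_h^e)+(u_h^e-u_h)=:\eta+\theta,
\]
with $\theta\in V_h$. The first piece, $\eta$, already appears on the right-hand side, so everything reduces to bounding $\|\theta\|_{L^\infty((0,T);L^\infty(\Omega))}$. We will do this by deriving a discrete parabolic equation for $\theta$ and then applying the stability results of Section \ref{sec4} and Theorem \ref{stability}.

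First we identify $u_h^e$ through the discrete Laplacian. Comparing (\ref{HDG-E}) with (\ref{dl:1})--(\ref{dl:3}), the pair $(\Bq_h^e,\widehat{u}_h^e)$ is exactly the auxiliary solution associated with $u_h^e$. Hence
\[
(-\Delta_h u_h^e,v_h)=(f-\partial_t u,v_h)\quad \forall v_h\in V_h,
\]
that is, $-\Delta_h u_h^e=\Pi_V f-\Pi_V\partial_t u$. Using (\ref{dl:4}) together with linearity of $\Delta_h$, we get
\[
\partial_t\theta-\Delta_h\theta=\partial_t u_h^e-\Pi_V\partial_t u=\Pi_V\partial_t(u_h^e-u).
\]
The right-hand side is therefore $-\Pi_V\partial_t\eta$.

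The main obstacle is that $\partial_t\eta$ is not controlled in $L^\infty$ by the quantities in the statement. We remove the time derivative with the semigroup representation of Lemma \ref{prep}. Write $\theta=\theta_1+\theta_2$, where $\theta_1$ solves the homogeneous problem with $\theta_1(0)=\theta(0)$, and $\theta_2$ solves the forced problem with $\theta_2(0)=0$.

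For $\theta_1$, (\ref{max-stability:2}) gives
\[
\|\theta_1(t)\|_{L^\infty}\le C\|\theta(0)\|_{L^\infty}.
\]
Next,
\[
\theta(0)=u_h^e(0)-u_h(0)=(u_h^e(0)-\Pi_Vu(0))+(\Pi_Vu(0)-u_h(0)),
\]
and $\|u_h^e-\Pi_Vu\|_{L^\infty}\le\|u_h^e-u\|_{L^\infty}+\|u-\Pi_Vu\|_{L^\infty}$, so $\theta_1$ is bounded by the right-hand side of the claim.

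For $\theta_2$, rather than estimating $\partial_t\eta$ directly, we set $\xi=\theta_2+\Pi_V\eta=\theta_2+\Pi_V u-u_h^e$. Then
\[
\partial_t\xi-\Delta_h\xi=-\Delta_h\Pi_V\eta=:g_h,
\qquad \xi(0)=\Pi_V\eta(0).
\]
Equivalently, $g_h=(\partial_t-\Delta_h)\Pi_V\eta-\partial_t\Pi_V\eta$. Splitting $\xi$ once more into a homogeneous part with data $\Pi_V\eta(0)$ and a forced part with zero data, the homogeneous part is controlled by (\ref{max-stability:2}) and $L^\infty$-stability of $\Pi_V$.

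The forced part is
\[
\zeta=\int_0^t E_h(t-s)\,(-\Delta_h)\Pi_V\eta(s)\,ds,\qquad E_h(t)=e^{t\Delta_h}.
\]
Since $-\Delta_h E_h=-\partial_tE_h$, the pointwise kernel of $\zeta$ is $\partial_s\Gamma_h$, as in Lemma \ref{prep}. The bound (\ref{stablity-proof:1}),
\[
\int_0^t\sup_z\|\partial_s\Gamma_h(z,\cdot,s)\|_{L^1}\,ds\le C|\ln h|,
\]
then gives
\[
\|\zeta\|_{L^\infty}\le C|\ln h|\,\|\Pi_V\eta\|_{L^\infty((0,T);L^\infty)}.
\]
This is exactly where the $|\ln h|$ factor of Theorem \ref{stability} enters.

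Finally, combine $\theta_2=\xi-\Pi_V\eta$, the bound $\|\Pi_V\eta\|_{L^\infty}\le C\|\eta\|_{L^\infty}$, and the triangle inequality to obtain
\[
\|u-u_h\|_{L^\infty}\le C|\ln h|\big(\|u-u_h^e\|+\|u-\Pi_Vu\|+\|\Pi_Vu(0)-u_h(0)\|\big).
\]
The only delicate point to check is the symmetry needed to use $\partial_s\Gamma_h$ as the kernel of $\Delta_hE_h$. The adjoint structure is already verified in the proof of Lemma \ref{prep}, since $\Delta_h$ is self-adjoint on $V_h$.
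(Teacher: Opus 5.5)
\textbf{Verdict.} Your proposal is correct. It differs from the paper's proof mainly in how the $\Delta_h$ in the forcing and the initial error are handled.

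\textbf{Comparison with the paper.} The paper sets $w_h=\Pi_Vu-u_h-e^{-t}(\Pi_Vu(0)-u_h(0))$ so that $w_h(0)=0$. It then observes that $\Delta_h^{-1}w_h$ solves the discrete heat equation with zero data and forcing $u_h^e-\Pi_Vu+e^{-t}(\Pi_Vu(0)-u_h(0))+e^{-t}\Delta_h^{-1}(\Pi_Vu(0)-u_h(0))$. Finally it applies Theorem~\ref{stability} (with $p=\infty$) as a black box to $w_h=\Delta_h(\Delta_h^{-1}w_h)$. This keeps the maximal-regularity result packaged, but it has two costs:
\begin{itemize}
\item it needs the extra bound $\|\Delta_h^{-1}\phi_h\|_{L^\infty(\Omega)}\le C\|\phi_h\|_{L^\infty(\Omega)}$, proved separately in Appendix A via elliptic regularity and a discrete Sobolev inequality;
\item it puts the factor $|\ln h|$ on the initial error as well.
\end{itemize}
You instead write Duhamel's formula directly and use the kernel bound (\ref{stablity-proof:1}) from inside the proof of Theorem~\ref{stability}; for $p=\infty$ this is all that theorem rests on. The initial error goes through the semigroup bound (\ref{max-stability:2}). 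So you need nothing from Appendix A, and you actually obtain a sharper estimate in which $|\ln h|$ multiplies only $\|u-u_h^e\|_{L^\infty((0,T);L^\infty(\Omega))}$.

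\textbf{Two simplifications.} First, your detour through $\theta=u_h^e-u_h$ and $\partial_t\eta$ tacitly requires $u_h^e$ to be differentiable in time. That means control of $\partial_t f$ and $\partial_{tt}u$, which the statement does not provide. You can avoid it because $\theta_1+\xi=\Pi_Vu-u_h$, and this function satisfies directly
$\partial_t(\Pi_Vu-u_h)-\Delta_h(\Pi_Vu-u_h)=\Delta_h(u_h^e-\Pi_Vu)$.
Its initial value is $\Pi_Vu(0)-u_h(0)$, since $\theta(0)+\Pi_V\eta(0)=\Pi_Vu(0)-u_h(0)$. So you can start from this identity, which is exactly the paper's, and you never need to evaluate $u_h^e$ at $t=0$.

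Second, the symmetry you flag as the delicate point is not needed. Since $\Delta_h$ commutes with its semigroup, $E_h(\sigma)\Delta_h=\partial_\sigma E_h(\sigma)$. Differentiating the homogeneous representation of Lemma~\ref{prep} in $\sigma$ then gives the kernel $\partial_\sigma\Gamma_h(z,x,\sigma)$ at once. Self-adjointness of $\Delta_h$ on $V_h$ does hold, so your version is not wrong, only unnecessary.
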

\begin{proof}
Let $w_h=\Pi_Vu-u_h-e^{-t}(\Pi_Vu(0)-u_h(0))$, according to the definitions of $\Delta_h$ and $u_h^e$ we have
\begin{align*}
\partial_t w_h-\Delta_hw_h=&\Delta_h(u_h^e-\Pi_Vu+e^{-t}(\Pi_Vu(0)-u_h(0)))+e^{-t}(\Pi_Vu(0)-u_h(0))\\
&+\partial_t(\Pi_Vu-u_h)-\Delta_h(u_h^e-u_h)\\
=&\Delta_h(u_h^e-\Pi_Vu+e^{-t}(\Pi_Vu(0)-u_h(0)))+e^{-t}(\Pi_Vu(0)-u_h(0))
\end{align*}
with $w_h(0)=0$. Since the discrete Laplacian operator $\Delta_h$ is well-defined, we can multiply the above equation with $\Delta_h^{-1}$ and obtain
\begin{align*}
\partial_t(\Delta_h^{-1}w_h)-\Delta_h(\Delta_h^{-1}w_h)=u_h^e-\Pi_Vu+(e^{-t}+e^{-t}\Delta_h^{-1})(\Pi_Vu(0)-u_h(0)),
\end{align*}
with $\Delta_h^{-1}w_h(0)=0$. Then according to Theorem \ref{stability} we get
\begin{align}
\|w_h\|_{L^{\infty}((0,T);L^{\infty}(\Omega))}=&\|\Delta_h(\Delta_h^{-1}w_h)\|_{L^{\infty}((0,T);L^{\infty}(\Omega))}\nonumber\\
\leq& C|\ln h|\Big(\|u_h^e-\Pi_Vu\|_{L^{\infty}((0,T);L^{\infty}(\Omega))}+\|\Pi_Vu(0)-u_h(0)\|_{L^{\infty}(\Omega)}\label{stability-proof:4}\\
&+\|\Delta_h^{-1}(\Pi_Vu(0)-u_h(0))\|_{L^{\infty}(\Omega)}\Big)\nonumber\\
\leq& C|\ln h|\Big(\|u_h^e-\Pi_Vu\|_{L^{\infty}((0,T);L^{\infty}(\Omega))}+\|\Pi_Vu(0)-u_h(0)\|_{L^{\infty}(\Omega)}\Big).\nonumber
\end{align}
Note that in the derivation of the above inequality, the following result, that will be proved in the Appendix A, has been used
\begin{align*}
\|\Delta_h^{-1}(\Pi_Vu(0)-u_h(0))\|_{L^{\infty}(\Omega)}\leq C\|\Pi_Vu(0)-u_h(0)\|_{L^{\infty}(\Omega)}.
\end{align*}
Finally, by using the stability of the $L^2$-projection $\Pi_V$ to (\ref{stability-proof:4}), we conclude the proof.
\end{proof}

\section{Conclusions}\label{sec6}
In this paper, for three different HDG methods (including mixed methods), we provide a unified semi-discrete scheme for parabolic problem in nonconvex polygonal/polyhedral domains. By establishing the local energy error estimates and energy estimates of the regularized Green's function, we provide a unified
framework to prove the maximum-norm stabilities of both the semigroup defined by the semi-discrete scheme and the corresponding discrete solutions. Since the asymmetry of the discrete scheme induced by the introduction of flux variables, the proof of stabilities in this paper is more difficult than our previous paper \cite{clq2025}. By using these stability results, we further prove the maximal regularity in $L^{\infty}((0,T);L^p(\Omega))$-norm and maximum-norm error estimates by introducing the discrete Laplacian operator associated with HDG methods. Since the proof of the local energy error estimates has not used the lifting operator that is only available for simplicial meshes (see \cite{lc2023}), our results (excluding mixed methods) are also valid for polygonal/polyhedral meshes.

\smallskip
\smallskip

\appendix

\section{}

Let $\Delta_h^{-1}\phi_h=\psi_h$ for any $\phi_h,\psi_h\in V_h$, we have $\Delta_h\psi_h=\phi_h$ and it suffices to prove that
\begin{align*}
\|\psi_h\|_{L^{\infty}(\Omega)}\leq C\|\phi_h\|_{L^{\infty}(\Omega)}.
\end{align*}
To this end, let $(\widetilde{\Bq},\psi)$ be the solution of the following problem:
\begin{subequations}
\begin{align}
\widetilde{\Bq}+\nabla\psi=0,\quad \text{in}~\Omega,\\
\nabla\cdot\widetilde{\Bq}=-\phi_h,\quad \text{in}~\Omega,\\
\psi=0,\quad \text{on}~\partial\Omega.
\end{align}
\end{subequations}
According to \cite[Theorem 18.13]{d1988}, we know that the above problem has a unique solution $\psi\in H_0^1(\Omega)\cap H^{1+\gamma}(\Omega)$
with $\gamma\in (\frac{1}{2},1)$ satisfying $\|\psi\|_{H^{1+\gamma}(\Omega)}\leq C\|\phi_h\|_{L^2(\Omega)}$. On the other hand, let $(\widetilde{\Bq}_h,\widehat{\psi}_h)\in \BW_h\times\widehat{V}_h$ be the solution of
(\ref{dl:2})-(\ref{dl:3}) with $u_h$ replaced by $\psi_h$, then we have
\begin{align}
a_h(\widetilde{\Bq}_h,\psi_h,\widehat{\psi}_h;\Bw_h,v_h,\widehat{v}_h)=-(\phi_h,v_h),\quad \forall (\Bw_h,v_h,\widehat{v}_h)\in\BW_h\times V_h\times\widehat{V}_h,\label{apendixA:1}
\end{align}
by using the definition of $\Delta_h$, where
\begin{align*}
&a_h(\widetilde{\Bq}_h,\psi_h,\widehat{\psi}_h;\Bw_h,v_h,\widehat{v}_h)\\
=&(\widetilde{\Bq}_h,\Bw_h)_{\mathcal{T}_h}+(\nabla \psi_h,\Bw_h)_{\mathcal{T}_h}-(\widetilde{\Bq}_h,\nabla v_h)_{\mathcal{T}_h}
-\langle\psi_h-\widehat{\psi}_h,\Bw_h\cdot\textbf{n}\rangle_{\partial\mathcal{T}_h}\\
&+\langle\widetilde{\Bq}_h\cdot\textbf{n}+
\tau(\mathcal{L}\psi_h-\widehat{\psi}_h),v_h-\widehat{v}_h\rangle_{\partial\mathcal{T}_h}.
\end{align*}
With a simple calculation, it is easy to observe that
\begin{align}
\|\Bw_h\|^2_{L^2(\Omega)}+\sum_{K\in\mathcal{T}_h}
\|\sqrt{\tau}(\mathcal{L}v_h-\widehat{v}_h)\|^2_{L^2(\partial K)}
= a_h(\Bw_h,v_h,\widehat{v}_h;\Bw_h,v_h,\widehat{v}_h), \label{apendixA:2}
\end{align}
and
\begin{align}
a_h(\widetilde{\Bq},\psi,\psi;\Bw_h,v_h,\widehat{v}_h)=-(\phi_h,v_h)+\langle\tau(\mathcal{L}\psi-\psi),v_h-\widehat{v}_h \rangle_{\partial\mathcal{T}_h},\label{apendixA:3}
\end{align}
for any $(\Bw_h,v_h,\widehat{v}_h)\in\BW_h\times V_h\times\widehat{V}_h$.
Moreover for any $(\Bw,v,\mu)\in H_h^{\gamma}(\Omega)\times H_h^1(\Omega)\times L_0^2(\mathcal{E}_h)$ and
$(\Bw_h,v_h,\widehat{v}_h)\in \BW_h\times V_h\times \widehat{V}_h$, we have by Lemma \ref{inv} and Lemma \ref{tra}
\begin{align}
&a_h(\Bw,v,\mu;\Bw_h,v_h,\widehat{v}_h)\nonumber\\
\leq& \left(\|\Bw\|_{L^2(\Omega)}+\|\nabla v\|_{L_h^2(\Omega)}\right)\|\Bw_h\|_{L^2(\Omega)}+\|\Bw\|_{L^2(\Omega)}\|\nabla v_h\|_{L_h^2(\Omega)}\nonumber\\
&+C\sum_{K\in\mathcal{T}_h}\left(\|\Bw\|_{L^2(K)}+h_K^{\gamma}|\Bw|_{H^{\gamma}(K)}+\|\sqrt{\tau}(\mathcal{L}v-\mu)\|_{L^2(\partial K)}\right)
h_K^{-\frac{1}{2}}\|v_h-\widehat{v}_h\|_{L^2(\partial K)}\nonumber\\
&+C\sum_{K\in\mathcal{T}_h}h_K^{-\frac{1}{2}}\|v-\mu\|_{L^2(\partial K)}\|\Bw_h\|_{L^2(K)}\label{apendixA:4}\\
\leq& C\Big(\|\Bw\|_{L^2(\Omega)}+\|\nabla v\|_{L_h^2(\Omega)}+ h^{\gamma}|\Bw|_{H_h^{\gamma}(\Omega)}+\Big(\sum_{K\in\mathcal{T}_h}\Big(\|\sqrt{\tau}(\mathcal{L}v-\mu)\|_{L^2(\partial K)}^2\nonumber\\
&+h_K^{-1}\|v-\mu\|_{L^2(\partial K)}^2\Big)\Big)^{1/2}\Big)\Big(\|\Bw_h\|_{L^2(\Omega)}+\|\nabla v_h\|_{L_h^2(\Omega)}+\Big(\sum_{K\in\mathcal{T}_h}
h_K^{-1}\|v_h-\widehat{v}_h\|_{L^2(\partial K)}^2\Big)^{1/2}\Big),\nonumber
\end{align}
Then Lemma \ref{H1}, the discrete Sobolev embedding inequality (\ref{max-stability-proof:ad11}) and (\ref{apendixA:1})-(\ref{apendixA:3}) yield
\begin{align}
\|\Pi_V\psi-\psi_h\|^2_{L^6(\Omega)}\leq& C\Big(\|\nabla(\Pi_V\psi-\psi_h)\|_{L_h^2(\Omega)}^2+\sum_{K\in\mathcal{T}_h}h_K^{-1}\|
(\Pi_V\psi-\psi_h)-(\Pi_{\widehat{V}}\psi-\widehat{\psi}_h)\|^2_{L^2(\partial K)}\Big)\nonumber\\
\leq&C\Big(\|\nabla(\Pi_V\psi-\psi_h)\|_{L_h^2(\Omega)}^2+\|\Pi_W\widetilde{\Bq}-\widetilde{\Bq}_h\|^2_{L^2(\Omega)}\nonumber\\
&+\sum_{K\in\mathcal{T}_h}h_K^{-1}\|(\Pi_V\psi-\psi_h)-(\Pi_{\widehat{V}}\psi-\widehat{\psi}_h)\|^2_{L^2(\partial K)}\Big)\nonumber\\
\leq& Ca_h(\Pi_W\widetilde{\Bq}-\widetilde{\Bq}_h,\Pi_V\psi-\psi_h,\Pi_{\widehat{V}}\psi-\widehat{\psi}_h;
\Pi_W\widetilde{\Bq}-\widetilde{\Bq}_h,\Pi_V\psi-\psi_h,\Pi_{\widehat{V}}\psi-\widehat{\psi}_h)\nonumber\\
=&Ca_h(\Pi_W\widetilde{\Bq}-\widetilde{\Bq},\Pi_V\psi-\psi,\Pi_{\widehat{V}}\psi-\psi;
\Pi_W\widetilde{\Bq}-\widetilde{\Bq}_h,\Pi_V\psi-\psi_h,\Pi_{\widehat{V}}\psi-\widehat{\psi}_h)\nonumber\\
&+\langle\tau(\mathcal{L}\psi-\psi),(\Pi_V\psi-\psi_h)-(\Pi_{\widehat{V}}\psi-\widehat{\psi}_h) \rangle_{\partial\mathcal{T}_h},\nonumber
\end{align}
which, together with (\ref{apendixA:4}) and the interpolation error estimates, leads to
\begin{align}
\|\Pi_V\psi-\psi_h\|_{L^6(\Omega)}\leq& C\Big(\|\Pi_W\widetilde{\Bq}-\widetilde{\Bq}\|_{L^2(\Omega)}+\|\nabla(\Pi_V\psi-\psi)\|_{L_h^2(\Omega)}+ h^{\gamma}|\Pi_W\widetilde{\Bq}-\widetilde{\Bq}|_{H_h^{\gamma}(\Omega)}\nonumber\\
&+\Big(\sum_{K\in\mathcal{T}_h}h_K^{-1}\big(\|\Pi_V\psi-\psi\|^2_{L^2(\partial K)}+\|\Pi_{\widehat{V}}\psi-\psi\|_{L^2(\partial K)}^2\big)\Big)^{1/2}\Big)\nonumber\\
\leq& Ch^{\gamma}\|\psi\|_{H^{1+\gamma}(\Omega)}\leq Ch^{\gamma}\|\phi_h\|_{L^2(\Omega)}. \nonumber
\end{align}
Thus we have
\begin{align*}
\|\psi_h\|_{L^{\infty}(\Omega)}\leq& \|\psi_h-\Pi_V\psi\|_{L^{\infty}(\Omega)}+\|\Pi_V\psi-\psi\|_{L^{\infty}(\Omega)}+\|\psi\|_{L^{\infty}(\Omega)}\\
\leq &C\Big(h^{-\frac{d}{6}}\|\psi_h-\Pi_V\psi\|_{L^6(\Omega)}+h^{1+\gamma-\frac{d}{2}}\|\psi\|_{H^{1+\gamma}(\Omega)}+\|\phi_h\|_{L^2(\Omega)}\Big)\\
\leq & C\Big(h^{\gamma-\frac{1}{2}}\|\phi_h\|_{L^2(\Omega)}+h^{1+\gamma-\frac{3}{2}}\|\phi_h\|_{L^2(\Omega)}+\|\phi_h\|_{L^2(\Omega)}\Big)\\
\leq & C\|\phi_h\|_{L^{\infty}(\Omega)}.
\end{align*}
Then we conclude the proof.

\smallskip
\smallskip

{\bf Acknowledgements} All authors contributed equally. Huangxin Chen was supported by National Key Research and Development Project of China (Grant No. 2024YFA1012600), National Natural Science Foundation of China (Grant No. 12471345), and Fujian Provincial Natural Science Foundation of China (2026J011002). Haitao Leng was supported by Basic and Applied Basic Research Foundation of Guangdong Province (Grant No. 2024A1515011163, 2025A1515010428). Weifeng Qiu was supported by the Research Grants Council of the Hong Kong Special Administrative
Region, China. (Project No. CityU 11300621). Weifeng Qiu would like to thank Professor Rob Stevenson for the fruitful
discussion of this paper between them when he visited University of Amsterdam in 2025.


\end{document}